\renewcommand {\familydefault}{bch}
\numberwithin{equation}{section}
\newtheorem{theorem}{Theorem}[section]
\newtheorem{lemma}[theorem]{Lemma}
\newtheorem{proposition}[theorem]{Proposition}
\newtheorem{corollary}[theorem]{Corollary}
\newtheorem{claim}[theorem]{Claim}
\theoremstyle{definition}
\newtheorem{definition}[theorem]{Definition}
\theoremstyle{remark}
\newtheorem*{remark}{Remark}
 \newcommand{\be}{\begin{equation}}
 \newcommand{\ee}{\end{equation}}
 \newcommand{\ba}{\begin{array}}
 \newcommand{\ea}{\end{array}}
 \newcommand{\bea}{\begin{eqnarray}}
 \newcommand{\eea}{\end{eqnarray}}
 \newcommand{\bl}{\begin{lemma}}
 \newcommand{\el}{\end{lemma}}
 \newcommand{\br}{\begin{remark}}
 \newcommand{\er}{\end{remark}}
 \newcommand{\bt}{\begin{theorem}}
 \newcommand{\et}{\end{theorem}}
 \newcommand{\bd}{\begin{definition}}
 \newcommand{\ed}{\end{definition}}
 \newcommand{\bcl}{\begin{claim}}
 \newcommand{\ecl}{\end{claim}}
 \newcommand{\bp}{\begin{proposition}}
 \newcommand{\ep}{\end{proposition}}
 \newcommand{\bc}{\begin{corollary}}
 \newcommand{\ec}{\end{corollary}}
 \newcommand{\bpr}{\begin{proof}}
 \newcommand{\epr}{\end{proof}}
 \newcommand{\bi}{\begin{itemize}}
 \newcommand{\ei}{\end{itemize}}
 \newcommand{\ben}{\begin{enumerate}}
 \newcommand{\een}{\end{enumerate}}
 \def \D {{\mathbb D}}
 \def \R {{\mathbb R}}
 \def \N {{\mathbb N}}
 \def \P {{\mathbb P}}
 \def \E {{\mathbb E}}
 \def \cD {\mathcal{D}}
 \def \cL {\mathcal{L}}
 \def \cR {\mathcal{R}}
 \def \cT {\mathcal{T}}
 \def \cX {\mathcal{X}}
 \def \a {{\alpha}}
 \def \b {{\beta}}
 \def \d {{\delta}}
 \def \e {{\varepsilon}}
  \def \g {{\gamma}}
 \def \s {{\sigma}}
 \def \k {{\kappa}}
 \def \z {{\zeta}}
 \def \m {{\mu}}
 \def \t {{\tau}}
 \def \th {{\theta}}
 \def \l {{\lambda}}
 \def \D {{\Delta}}
 \def \G {{\Gamma}}
\def \à {{\`{a}}}
\def \ì{{\`{\i}}}
\def \ò{{\`{o}}}
\def \è{{\`{e}}}
\def \ù{{\`{u}}}
\def \gL{\mathcal{L}_{\mathcal R}^{^*}}
\def \lL{\mathcal{L}_{\mathcal R,\l}^{^*}}
\def\ti{\tilde}
\def \XR{{\cX\!\setminus\!\cR}}
 \def \1{\mathbbm{1}} 
 \def\no{\noindent}
 \def\var{\hbox{\rm Var}}
\def\sfrac#1#2{{\textstyle{\frac{#1}{#2}}}}
\begin{document}

\title[Metastability and quasi-stationary measures]
{Metastable states,\\
quasi-stationary  distributions and soft measures}
\thanks{Supported by GDRE 224 GREFI-MEFI, by
the European Research Council through the Advanced Grant PTRELSS 228032,
 and by the FIRB Project RBFR10N90W (MIUR, Italy). A. Bianchi acknowledges
the University of Padua that provided partial financial support through
the Project “Stochastic Processes and Applications to Complex Systems” (CPDA123182)}
\author[A. Bianchi]{Alessandra Bianchi}
\address{A. Bianchi \\
Dip. di Matematica - Universit\`a  di Padova\\
Via Trieste, 63 - 35121 Padova, Italy}
\email{bianchi@math.unipd.it}
\author[A. Gaudilli\`ere]{Alexandre Gaudilli\`ere}
\address{A. Gaudilli\`ere \\
Aix Marseille Universite, CNRS, Centrale Marseille, I2M, UMR 7373, 13453 Marseille, France}
\email{alexandre.gaudilliere@math.cnrs.fr}
\subjclass[2010]{82C26, 60J27, 60J75, 60J45.}
\keywords{Metastability, restricted ensemble, quasi-stationary
measure, soft measures, exponential law, spectral gap, mixing time,
potential theory.}
\date{June 11, 2015}

\begin{abstract}
We establish metastability in the sense of Lebowitz and Penrose
under practical and simple hypotheses for Markov chains on a
finite configuration space in some asymptotic regime.
By comparing restricted ensemble and quasi-stationary measures,
and introducing soft measures as interpolation between the two,
we prove asymptotic exponential exit law and, on a generally
different time scale, asymptotic exponential transition law.
By using potential-theoretic tools,
and introducing ``$(\kappa, \lambda)$-capacities'',
we give sharp estimates on relaxation time,
as well as mean exit time and transition time.
We also establish local thermalization on shorter time scales.
\end{abstract}

\maketitle

\section{Metastability after Lebowitz and Penrose}

\subsection{Phenomenology and modelization}

Lebowitz and Penrose characterized {\em metastable thermodynamic
 states} by the following properties~\cite{LP}:
\begin{itemize}
\item[(a)] only one thermodynamic phase is present,
\item[(b)] a system that starts in this state is likely to
take a long time to get out,
\item[(c)] once the system has gotten out, it is unlikely
to return.
\end{itemize}
We can think, for example, about freezing fog made of small droplets
in which only one phase is present (liquid phase) that remains
for a long time in such a state (until collision with ground
or trees, forming then hard rime) and that once frozen
will typically not return to liquid state before pressure
or temperature have changed.

To model such a state they considered in \cite{LP}
a deterministic dynamics with equilibrium measure $\mu$.
First, they associated with the metastable phase
a subset $\cR$ of the configuration space, and described this
metastable state by the {\em restricted ensemble}
$\mu_\cR = \mu(\cdot|\cR).$
Second, they proved that the escape rate from $\cR$
of the system started in $\mu_\cR$ is maximal
at time $t = 0$, and that this initial escape
rate is very small.
Last, they used standard methods of equilibrium statistical
mechanics to deal with (c). As an estimate of the returning probability
to the metastable state they used the fraction
of members of the {\em equilibrium} ensemble that have configurations
in $\cR$ and they noted~(\cite{LP}, Section~8):
\begin{quote}
This amounts to assuming that a system whose dynamical state
has just left $\cR$ is no more likely to return to it than
one whose dynamical state was never anywhere near $\cR$.
The validity of this assumption, at least in the short run, is
dubious, but at least it provides us with some indication of what
to expect.
\end{quote}

In this paper we want to give a different model
for the same phenomenology
that overcomes the last difficulty.
We will work with stochastic processes
rather than deterministic dynamics,
but the Lebowitz-Penrose modelization
will be our guideline.
We will try to recover this phenomenology
under simple and practical hypotheses only.
Since the study of metastability has been considerably
enriched after the Lebowitz and Penrose work,
we want also to incorporate in our modeling
as much as possible of what was previously achieved.
We will then make a brief and partial review of
these achievements.
Our goals and starting ideas will depend on this review
but not our proofs, since we want to make this
paper as self-contained as possible.
Our model and results are presented in Section~\ref{model_and_results}.
Examples of applications are given in Section~\ref{trotta}.

\subsection{A partial review}

Since the Lebowitz and Penrose paper, an enormous amount of work
has been done to describe the metastability phenomenon.
In particular Cassandro, Galves, Olivieri and Vares introduced the path-wise
approach, which focused, in the context of stochastic processes, on
time averages associated with an asymptotic exponential law~\cite{CGOV}.
This was further developed by the pioneering works of Neves and
Schonmann who studied the typical paths for stochastic Ising model
in a given volume in the low temperature regime~(\cite{NS1},
\cite{NS2}). This work was then extended to higher dimensions,
infinite-volume and fixed-temperature regimes, locally conservative
dynamics and probabilistic cellular automata~(\cite{DS}, \cite{BC}, \cite{SS},
\cite{GOS}, \cite{CNS},~\cite{CM}).

As developed in~\cite{OV}, a crucial role was played by large deviation tools inherited
from Wentzell and Freidlin in their reduction procedure from continuous stochastic processes
to finite-configuration-space Markov chains with exponentially small transition rates~\cite{WF}.
This is especially true for very-low-temperature regimes,
but the same kind of reduction procedure made it possible to deal
in various cases with large-volume rather than low-temperature limits
(see~\cite{MP} for the Curie-Weiss model under random magnetic field, see~\cite{OV}
for further examples).

Then, using potential-theoretic rather than large deviations tools,
Bovier, Eckhoff, Gay\-rard and Klein developed a set of general techniques
to compute sharp asymptotics of the expected value of asymptotic
exponential laws associated with the metastability phenomenon,
and revisited (after~\cite{W}, \cite{S2}) the relation between spectrum of the generator
of the stochastic dynamics and metastability~(\cite{BEGK1},
\cite{BEGK2},~\cite{BEGK3}). This allowed, for example, to go beyond logarithmic
asymptotics for stochastic Ising models in the low-temperature regime~(\cite{BM},~\cite{BdHN})
and to prove the first rigorous results in the fully conservative case~(\cite{BdHS}),
to deal with metastability for the random hopping-time dynamics
associated with the Random Energy Model~(\cite{BBG}), to make a detailed
analysis of Sinai's random walk spectrum~(\cite{BF}), and to extend the study
of the disordered Curie-Weiss model to the case of continuous
magnetic field distribution~(\cite{BBI1}, ~\cite{BBI2}).
We refer to~\cite{BdH} for a comprehensive account of this approach.

We then reached an essentially complete comprehension
of the metastability phenomenon in at least two classes
of models: very low temperature dynamics in finite fixed volumes
and large volume or continuous-configuration space
dynamics that can be reduced via a Wentzell-Freidlin procedure
to the previous case. Of course, specific and often
nontrivial computations have to be made for each specific model,
but there exists a general approach to the problem
that is developed in~\cite{OV} and, as far as the potential-theoretic
part is concerned,~\cite{BEGK1}, \cite{BEGK2}, and \cite{BEGK3}
together with
~\cite{BL3}, \cite{BL4}
that bridges between potential theory
and typical path description by reinforcing and generalizing
the results of~\cite{S1} (and it is worth noting that~\cite{BL3},
after~\cite{BL1,BL2}, contemplates also the case of polynomially
small rather than only exponentially small transition probabilities).
For both classes of models, like one-dimensional metastable
systems as considered in~\cite{BF} or~\cite{BBF},
a recurrence property for
a very localized subset of the configuration space
(single configurations identified to metastable states in the first case,
small neighborhoods of the dynamics attractors in the second case)
plays an important role.

Beyond these two classes of models there are many limit cases,
special cases, and partial results.
For example, in~\cite{BBG} we are far from a finite-fixed-volume situation
but single configurations can still be identified with metastable states
and have still enough mass at equilibrium for potential-theoretic or renewal
techniques to work. This is not the case in~\cite{BdHS}, where potential-theoretic
tools give only expected values of some hitting times
when the system is started from some specific harmonic measures
that are very different from what one would expect to be a ``metastable state''
(here, like in the sequel and following Lebowitz and Penrose, we mean
a whole measure when referring to a metastable ``state'' and not to
a single configuration of the configuration space). Any kind of
exponential law is presently also lacking in this case.
The same difficulty is faced in~\cite{BBI1}, but it is overcome
in~\cite{BBI2} by mean of a specific coupling argument
that gives point-wise estimates and opens the way to the exponential law.
We also note that~\cite{BL4} develops some general martingale ideas
to deal with the same issues within the framework built from~\cite{BL1, BL3}.
In fact, though working with a different setup,
we share some of the leading ideas developed in~\cite{BL4},
which uses some of the key objects that we introduced through this work.
Such ideas also inspired~\cite{FMNSS}
where the non-reversible situation is contemplated.
Finally, the beautiful paper by Schonmann and Shlosman~\cite{SS}
achieves the {\em tour de force} of using essentially
equilibrium statistical mechanics
computations to deal with the dynamical problem of metastability.
In this case also the exponential law is lacking as well as sharp estimates
on the relaxation time, and even the simple formulation of such
properties is not completely obvious in this fixed-temperature and
vanishing-magnetic-field regime.

\subsection{Starting ideas}

In the present paper we want to elaborate some tools to describe
the metastability phenomenon beyond the case
of a dynamics with a recurrence property for a very localized subset of
the configuration space.
We will focus on exponential laws and sharp asymptotics
of their expected values.
We note that the exponential law itself suggests some kind
of recurrence property.
If it is not a recurrence property for a very localized
subset, it has to be in some sense a recurrence
property to a whole ``spread measure''.
And this measure should coincide with our metastable state.
Now, following Lebowitz and Penrose,
if we associate the metastable state with some subset
$\cR$ of the configuration space $\cX$, then, considering
property (b),  we have at least
two candidates to describe our metastable state:
one is the restricted ensemble
$\mu_\cR = \mu(\cdot|\cR)$,
the other is the quasi-stationary distribution
\begin{equation}\label{q-statdef}
\mu_\cR^*= \lim_{t\rightarrow +\infty} P_{\mu_\cR}(X(t) \in \cdot | \tau_{\cX\setminus\cR} > t)
\end{equation}
where $X(t)$ is the configuration of the system at time $t$
and $\tau_{\cX\setminus\cR}$ is the exit time of $\cR$ (we will be
more precise in the next section).
Notice that Eq. (\ref{q-statdef}) provides
the stationarity of $\mu_{\mathcal{R}}^*$ for the process conditioned
to not having exit $\cR$,
\begin{equation}
	P_{\mu_{\mathcal{R}}^*}\left(
		X(t) \in \cdot
		| \tau_{\mathcal{X} \setminus \mathcal{R}} > t
	\right) = \mu_{\mathcal{R}}^*
	\,,
\end{equation}
and thus explains the name of quasi-stationary distributions.

The main advantage of $\mu_\cR$ is that $\mu_\cR$ is
often an explicit measure one can compute with, while
$\mu_\cR^*$ is only implicitly defined.
The main advantage of $\mu_R^*$ is that the exit law of $\cR$
for the system started in $\mu_R^*$ {\em is} an exponential law.
Our first results will then start, as in~\cite{AB}, with a comparison between
$\mu_R$ and $\mu_R^*$. We will give simple and practical
hypotheses to ensure that they are close in some sense, then we will be able
to prove some kind of recurrence property for $\mu_R^*$.
In doing so we will also answer some problems left open in~\cite{AB}
(see our comment after formula~(\ref{banana})).
All this will be done in the simplest possible setup:
considering a Markov process on a finite configuration space
in some asymptotic regime (including the possibility
of sending to infinity the cardinality of the configuration space).

In the present work we will essentially build on the ideas of four different papers:~\cite{LP}
for the formulation of the problem,~\cite{CGOV} for the focus on exponential
laws,~\cite{BEGK1} for the introduction of potential-theoretic techniques
in the metastability field to get sharp estimates on some mean hitting times,
and Miclo's work~\cite{Mi1} where some concepts of local equilibrium,
and ``hitting times'' of such equilibriums, are introduced.
 As far as this last paper is concerned,
it will only work as a source of inspiration: we will not require a full spectrum
knowledge, and we will not introduce any notion of dependence of a local equilibrium on the initial condition.
Finally, we note that the idea of considering quasi-stationary measures as metastable states
was already contemplated in~\cite{HMS}. Even though some of our results
echo some of~\cite{HMS}, we were not able to make any clear comparison, essentially
because of the much more analytical point of view of~\cite{HMS} and the many hypotheses
introduced in the results of~\cite{HMS}.
We note that~\cite{HMS} deals with a much more general setup than ours
since the authors consider non-reversible Markov processes on a continuous configuration space,
while we look at reversible Markov processes on finite configuration space.
However, the reason why we assume reversibility is to be able
to use potential-theoretic results to get sharp estimates on mean times
via variational principles, a question that is not considered
in~\cite{HMS}.

\subsection{Two new objects}

In this section we provide a brief explanation
on the two main new objects of this paper,
that we will progressively describe
in the sequel : $(\kappa, \lambda)$-\emph{capacities}
and \emph{soft-measures}.
To understand their meaning, beyond their definitions,
we can start from the main formula introduced
in the context of metastable dynamics by~\cite{BEGK1}.
Given a reversible and irreducible Markov process $X : t \mapsto X(t) \in \mathcal{X}$,
and for any two disjoint and non-empty subsets $A$ and $B$ of $\mathcal{X}$, it holds
\begin{equation} \label{clipper}
	{\mathbb E}_{\nu_A}\left[
		\tau_B
	\right]
	= \frac{\mu(V_{A, B})}{C(A, B)}
	\,,
\end{equation}
where $\nu_A$ is the so-called harmonic measure on $A$
(which actually depends also on $B$),
$\tau_B$ is the hitting time of $B$,
$\mu(V_{A, B})$ is the mean value,
w.r.t. the equilibrium measure $\mu$,
of the ``equilibrium potential'' between $A$ and $B$,
and $C(A, B)$ is the capacity between $A$ and $B$.
This formula had in particular two crucial advantages.
First, it allowed to describe the metastability phenomenon
essentially only through the computation of mean hitting times.
Second, the most relevant part in the right-hand side
is the capacity  appearing in the denominator,
which has the key property of satisfying two variational principles
which, in turn, can be used to get sharp estimates just by using test functions
to obtain upper bounds and test flows to obtain lower bounds.
Using this formula one has however to cope with three interlinked difficulties,
which, depending on the considered model,
can or cannot be easily overcame:
\begin{enumerate}
\item[i)]
	the choice of the family of sets $A$ and $B$ can be delicate;
\item[ii)]
	there is in general no variational principle
	to help in estimating the mean potential at the numerator of the right-hand side;
\item[iii)]
	the harmonic measure $\nu_A$ is in general
	very different from the natural measures
	associated with metastability, say $\mu_\mathcal{R}$
	or $\mu^*_\mathcal{R}$.
\end{enumerate}
Let us rapidly explain these three points.
Formula~\eqref{clipper}
is in its very nature associated with the Markov process
{\em stopped at time $\tau_B$}.
Our previous discussion explains
why it will not be sufficient just to choose $B = \mathcal{X} \setminus \mathcal{R}$.
Thus, in general, one has to consider a family of sets $B$
that are ``deep inside $\mathcal{X} \setminus \mathcal{R}$'',
and for a symmetric reason, the family of sets $A$ should be chosen ``deep inside $\mathcal{R}$'' too.
But then, the deeper are these chosen sets, the harder turns
the estimation of the mean potential.
Moreover, while  $\mu_\mathcal{R}$ and $\mu^*_\mathcal{R}$
are usually concentrated deep inside $\mathcal{R}$ (and $A$),
the harmonic measure $\nu_A$ of formula ~\eqref{clipper} has support on the {\em border} of $A$.
In general, this makes  uneasy the comparison between the Markov process
started from $\nu_A$ and the system started from a ``metastable equilibrium''.
We point out that this last difficulty
is actually the exact counterpart in $A$
of the fact that~\eqref{clipper} deals with a Markov process stopped in $B$
(this possibly not obvious fact can be well understood
by looking at the proof of~\eqref{clipper}).

The two objects that we introduce in this work, are partially intended
to deal with these difficulties.
The $(\kappa, \lambda)$-capacities
are capacities computed in an extended network
that is associated with a Markov process
{\em stopped at rate $\lambda$ in $B$}
and for which $\kappa$ plays a symmetrical role in $A$
(just like, when discussing~\eqref{clipper},
we noted that the fact that $\nu_A$
was concentrated on the border of $A$
was the counterpart of the fact
that~\eqref{clipper} was dealing with
a process stopped in $B$).
We will then be able to build on~\eqref{clipper}
with a Markov process that {\em can} penetrate $B$.
This will allow to simply choose $B = \mathcal{X} \setminus \mathcal{R}$,
rather than a family of subsets of $\mathcal{X} \setminus \mathcal{R}$,
and to compute the ``mean transition times'' from metastable to stable states
by estimating  the $(\kappa, \lambda)$-capacities.
Symmetrically, the parameter $\kappa$ will be used to deal
with measures that are concentrated deep inside $A = \mathcal{R}$.
In addition, the parameter $\lambda$ will be used to interpolate
between the restricted ensemble $\mu_\mathcal{R}$ (at $\lambda = 0$)
and the quasi-stationary measure $\mu^*_\mathcal{R}$ (at $\lambda = + \infty$).
These interpolating measures will be our soft-measures;
they are the quasi-stationary measures of the trace on $\mathcal{R}$
of the process killed outside $\mathcal{R}$ at rate~$\lambda$.
In some sense, they are intended to keep the idea of characterizing
metastability through the computation of mean hitting times,
for which we can benefit of the classical potential theory
and of its variational principles.
Though formula~\eqref{clipper} will be used  in our proofs,
we will derive new (asymptotic) equations expressing these mean hitting times
in terms of quantities that satisfy two-sided variational principles only,
and do not involve mean potentials.

Finally, we stress that the difficulties arising
when using~\eqref{clipper}
to describe a metastable dynamics
will not magically disappear by using soft-measures
or $(\kappa, \lambda)$-capacities instead of standard capacities.
They are actually deferred into the estimation of the local relaxation times,
called $\gamma_\mathcal{R}^{-1}$ and $\gamma_{\mathcal{X} \setminus \mathcal{R}}^{-1}$
in the sequel.
However, in doing so, we can benefit from the huge  mathematical literature dealing with
the computation of rates of convergence to equilibrium.
In this paper we will also prove a new Poincar\'e inequality, adding one more tool in this respect.
And at this point, we should stress that the hypotheses that these {\em local} relaxation times
should satisfy in order to apply our results,  do not require sharp estimates.
Rough estimates will be enough to find large windows
in which choosing our parameters $\kappa$ and $\lambda$
to obtain, through the use of variational principles, sharp estimates
on the {\em global} relaxation time ($\gamma^{-1}$ in the sequel).

\section{Model and results}\label{model_and_results}

\subsection{Quasi-stationary measure and restricted ensemble}

We consider a continuous-time Markov process $X$
on a finite set $\cX$ with generator defined by
\begin{equation}
\cL f(x) = \sum_{y\in \cX} p(x,y) (f(y) - f(x)) \label{mk}
\end{equation}
for $x$ in $\cX$ and $f:\cX\rightarrow {\mathbb R}$,
and where $p$ is such that
\begin{equation} \label{lampione}
	\sum_y p(x,y) = 1
	\,.
\end{equation}
Since $\cX$ is finite, any generator can be written
like in~(\ref{mk}) up to time rescaling.
We assume that $X$ is irreducible and reversible
with respect to some probability measure $\mu$,
we denote by $\langle\cdot\,,\cdot\rangle$
the scalar product in $\ell^2(\mu)$,
by $\|\cdot\|$ the associated 2-norm,
by $\cD$ the Dirichlet form defined
by
\begin{equation}
\cD(f)  = \langle f,-\cL f\rangle
=\frac{1}{2}\sum_{x,y\in\cX}c(x,y)\left[f(x) - f(y)\right]^2
\end{equation}
where each conductance $c(x,y)$ is equal to
\begin{equation}
c(x,y) = \mu(x)p(x,y),
\end{equation}
and by $\gamma$ the spectral gap
\begin{equation}
\gamma = \min_{\var_\mu(f)\neq 0}\frac{\cD(f)}{\var_\mu(f)}.
\end{equation}
For $\cR\subset\cX$ we define
in each $x\in\cR$ the escape probability (or rate)
\begin{equation}
e_\cR(x) = \sum_{y\not\in\cR} p(x,y)
\end{equation}
and we denote by $X_\cR$
the {\em reflected process}
(or {\em restricted process})
with generator given by
\begin{equation}
\cL_\cR f(x) = \sum_{y\in \cR} p_\cR(x,y) (f(y) - f(x))
\end{equation}
for $x$ in $\cR$ and $f:\cR\rightarrow {\mathbb R}$,
and where, for all $x$, $y$ in $\cR$,
\begin{equation}
p_\cR(x,y) = \left\{\begin{array}{ll}
    p(x,y) & \mbox{if $x\neq y$,}\\
    p(x,x) + e_\cR(x) & \mbox{if $x = y$}.
  \end{array}\right.
\end{equation}
We will only consider subsets $\cR$
such that both $X_\cR$ and $X_{\cX\setminus\cR}$
are irreducible and we note that $X_\cR$
inherits from $X$ the reversibility property
with respect to the restricted ensemble
\begin{equation}
  \mu_\cR = \mu(\cdot |\cR).
\end{equation}
We identify $\ell^2(\mu_\cR)$
with the subset of $\ell^2(\mu)$ of
functions $f:\cX\rightarrow{\mathbb R}$
such that $f|_{\cX\setminus\cR} \equiv 0$
and we denote by $\langle\cdot\,,\cdot\rangle_\cR$,
$\|\cdot\|_\cR$, $\cD_\cR$, $c_\cR(x,y)$ and $\gamma_\cR$
the associated scalar product, 2-norm,
Dirichlet form, conductances for $x$, $y$ in $\cR$
and spectral gap.

We denote by $p_\cR^*$
the sub-Markovian kernel  on $\cR$
such that, for all $x$, $y$ in $\cR$,
\begin{equation}
p_\cR^*(x,y) = p(x,y).
\end{equation}
We know from~\cite{DSe} and the Perron-Frobenius theorem
that there exists $\phi_\cR^*> 0$ such that
$1-\phi_\cR^*$ is the spectral radius of $p_\cR^*$
and that there is a unique {\em quasi-stationary measure} $\mu_\cR^*$
such that $\mu_\cR^*p_\cR^* = (1-\phi_\cR^*)\mu_\cR^*$.
In addition we have, for all $x$, $y$ in $\cR$ and $t\geq 0$,
with $\tau_{\mathcal{X} \setminus \mathcal{R}}$ the exit time from $\mathcal{R}$,
i.e., the hitting time of $\mathcal{X} \setminus \mathcal{R}$,
\begin{eqnarray}
&\lim_{t\rightarrow+\infty} P_x(X(t) = y | \tau_{\cX\setminus\cR} > t)
= \mu_\cR^*(y), &\label{Yaglom}\\
&P_{\mu_\cR^*}(\tau_{\cX\setminus\cR} > t) = e^{-\phi_\cR^* t},&\\
&\mu_\cR^*(e_\cR) = \phi_\cR^*.\label{gioia}
\end{eqnarray}
The limit in~\eqref{Yaglom}
is called a {\em Yaglom limit}
after Yaglom showed the existence
of such limits in the case of branching processes~\cite{Y}.
In our context of  finite state spaces,
the existence of such a limit, that does not depend
on the starting point $x$, simply follows from the Perron-Frobenius theorem.
In Sections~\ref{soft_therm_trans_mix}
and~\ref{soft_p} these properties
will be rederived in a slightly more general context.

Our first result states that if $1/\phi_\cR^*$, the mean exit time
for the system started in $\mu_\cR^*$, is large with respect to $1/\gamma_\cR$,
the relaxation time of the reflected process, then the quasi-stationary measure
$\mu_\cR^*$ is close to the restricted ensemble $\mu_\cR$.
This is similar to Lemma 10 (b) in \cite{AB}.
More precisely,  for all $x$ in $\cR$, let us define
\begin{eqnarray}
\varepsilon_\cR^* &=& \frac{\phi_\cR^*}{\gamma_\cR}\\
h_\cR^*(x) &=& \frac{\mu_\cR^*(x)}{\mu_\cR(x)}
\end{eqnarray}
and notice that $h_\cR^*$ is a right eigenvector of $p_\cR^*$ with eigenvalue
$1-\phi_\cR^*$. We prove the following.
\begin{proposition}\label{prop:varH}
If $\e_\cR^*<1$, then
\begin{equation}
\var_{\m_\cR}(h_\cR^*)
= \|h_\cR^* - \1_\cR\|_\cR^2
\leq \frac{\e_\cR^*}{1-\e_\cR^*}
\end{equation}
\end{proposition}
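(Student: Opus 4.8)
The plan is to exploit the quasi-stationarity of $\mu_\cR^*$ to bound from above the Dirichlet form $\cD_\cR(h_\cR^*)$ of its density, and then to play this bound against the spectral gap $\gamma_\cR$ of the reflected process. I would first dispose of the equality in the statement: since $\sum_{x\in\cR}\mu_\cR^*(x)=1$, one has $\langle h_\cR^*,\1_\cR\rangle_\cR=\sum_{x\in\cR}\mu_\cR(x)h_\cR^*(x)=1$, so $h_\cR^*$ has $\m_\cR$-mean $1$ and therefore $\var_{\m_\cR}(h_\cR^*)=\|h_\cR^*-\1_\cR\|_\cR^2=\|h_\cR^*\|_\cR^2-1$. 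It thus suffices to bound $\|h_\cR^*\|_\cR^2-1$.

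Next I would rewrite the left-eigenvector relation $\mu_\cR^*p_\cR^*=(1-\phi_\cR^*)\mu_\cR^*$ in weak form: for every $f\colon\cR\to\R$,
\be
\langle h_\cR^*,p_\cR^*f\rangle_\cR=(1-\phi_\cR^*)\langle h_\cR^*,f\rangle_\cR ,
\ee
where $p_\cR^*$ acts by $(p_\cR^*f)(x)=\sum_{y\in\cR}p(x,y)f(y)$; this is just a resummation using $h_\cR^*=\mu_\cR^*/\mu_\cR$. A short computation from the definitions of $p_\cR$, $e_\cR$ and $\cL_\cR$ gives the operator identity $p_\cR^*-I=\cL_\cR-e_\cR$, where $e_\cR$ acts by multiplication (informally, $p_\cR^*$ is the reflected generator with an extra killing at rate $e_\cR$). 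Substituting this into the weak relation yields
\be
\langle h_\cR^*,\cL_\cR f\rangle_\cR-\langle h_\cR^*,e_\cR f\rangle_\cR=-\phi_\cR^*\langle h_\cR^*,f\rangle_\cR
\ee
for all $f$.

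I would then take $f=h_\cR^*$. By reversibility of $X_\cR$ with respect to $\mu_\cR$, $\langle h_\cR^*,\cL_\cR h_\cR^*\rangle_\cR=-\cD_\cR(h_\cR^*)$, while $\langle h_\cR^*,e_\cR h_\cR^*\rangle_\cR=\sum_{x\in\cR}\mu_\cR(x)e_\cR(x)h_\cR^*(x)^2\geq 0$, so
\be
\cD_\cR(h_\cR^*)=\phi_\cR^*\|h_\cR^*\|_\cR^2-\langle h_\cR^*,e_\cR h_\cR^*\rangle_\cR\leq\phi_\cR^*\|h_\cR^*\|_\cR^2=\phi_\cR^*(1+\var_{\m_\cR}(h_\cR^*)).
\ee
If $\var_{\m_\cR}(h_\cR^*)=0$ there is nothing to prove; otherwise the variational definition of $\gamma_\cR$ gives $\cD_\cR(h_\cR^*)\geq\gamma_\cR\var_{\m_\cR}(h_\cR^*)$, hence $\gamma_\cR\var_{\m_\cR}(h_\cR^*)\leq\phi_\cR^*(1+\var_{\m_\cR}(h_\cR^*))$. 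Since $\e_\cR^*<1$ means exactly $\gamma_\cR>\phi_\cR^*$, solving this affine inequality gives $\var_{\m_\cR}(h_\cR^*)\leq\phi_\cR^*/(\gamma_\cR-\phi_\cR^*)=\e_\cR^*/(1-\e_\cR^*)$, as claimed.

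The only step asking for a little care is the bookkeeping behind the operator identity $p_\cR^*-I=\cL_\cR-e_\cR$ and the passage from $\mu_\cR^*p_\cR^*=(1-\phi_\cR^*)\mu_\cR^*$ to its weak form in $\ell^2(\mu_\cR)$ (keeping track of which measure weights which inner product); once that is settled, the conclusion is the one-line variational estimate above, so I do not anticipate a genuine obstacle.
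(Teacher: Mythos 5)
Your proposal is correct and follows essentially the same route as the paper: the bound $\cD_\cR(h_\cR^*)\leq\phi_\cR^*\,\m_\cR({h_\cR^*}^2)$ that you obtain via the identity $p_\cR^*-I=\cL_\cR-e_\cR$ and the left-eigenvector relation (dropping the nonnegative killing term $\langle h_\cR^*,e_\cR h_\cR^*\rangle_\cR$) is exactly the estimate the paper derives by expanding $p_\cR(x,y)=p(x,y)+\d_x(y)e_\cR(x)$ inside $\cD_\cR(h_\cR^*)$, and both arguments then combine it with the Poincar\'e inequality for the reflected process and solve the resulting affine inequality in $\var_{\m_\cR}(h_\cR^*)$.
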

\begin{proof}
See Section~\ref{quasi_restr_p_1}.
\end{proof}
\begin{remark}
When proving that $\varepsilon_\cR^*$
goes to~0 in some asymptotic regime
(for example  when the cardinality of the configuration space
goes to infinity like in~\cite{CGOV}, when some parameter
of the dynamics goes to~0 like the temperature
in~\cite{NS1} or when both happen like in~\cite{GdHNOS})
one has to give upper bounds on $\phi_\cR^*$
and lower bounds on $\gamma_\cR$.
$\phi_\cR^*$ satisfies a variational principle
through which one can get such upper bounds
using suitable test functions.
In particular, since one can often easily compute
with $\mu_\cR$, and $e_\cR$ is often explicit,
one can usually estimate
\begin{equation}\label{caramellino}
\phi_\cR = \mu_\cR(e_\cR)
\end{equation}
and then bound $\phi_\cR^*$ with $\phi_\cR$.
In some cases, for example in the low-temperature
regime, this estimate will already be good enough.
More generally and precisely,
we have the following lemma, that we prove
in Section \ref{quasi_restr_p_2}.
\end{remark}
\begin{lemma}\label{lem:phiphi}
$\displaystyle
\phi_{\cR}^* = \min_{
	\stackrel{\scriptstyle f \neq 0}
		{\scriptstyle  f_{|_\XR} = 0}
	}
\frac{\cD(f)}{\|f\|^2}
\leq \frac{1}{\E_{\mu_\cR}\left[\tau_{\cX \setminus \cR}\right]}
\leq \phi_{\cR}$.
\end{lemma}
\noindent Lower bounds on $\gamma_\cR$
can be more difficult to obtain.
However we note, first, that rough lower bounds
will often be sufficient to our ends, second,
that the new Poincar\'e inequality we will prove in this paper
(Theorem~\ref{th:gap&cap}) can be used to this purpose
(see Section~\ref{naso}).

As a consequence of this first result we can control
the convergence rate of the Yaglom limit in~(\ref{Yaglom}).
We note that, by the reversibility of $X$
with respect to $\mu$, $p_\cR^*$ is a self-adjoint operator on $\ell^2(\m_\cR)$
and has real eigenvalues.
By the Perron-Frobenius theorem, this implies the existence of a spectral gap
$\g_\cR^*>0$ equal to the difference between
the first and the second largest eigenvalue of $p_\cR^*$.
\begin{proposition}\label{prop:gaps}
If $\e_\cR^*<\frac{1}{3}$, then
\begin{equation}\label{gaps}
\frac{1}{\g_\cR^*}\leq \frac{1}{\g_\cR}\left\{\frac{1-\e_\cR^*}{1-3\e_\cR^*}\right\}\,.
\end{equation}
\end{proposition}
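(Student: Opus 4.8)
The plan is to control the second largest eigenvalue $\l_2$ of the self-adjoint operator $p_\cR^*$ on $\ell^2(\m_\cR)$ and then read off $1/\g_\cR^*=1/(\l_1-\l_2)$, where $\l_1=1-\phi_\cR^*$ is the (simple, positive) top eigenvalue furnished by Perron--Frobenius. The key elementary identity is
\be
p_\cR = p_\cR^* + E_\cR ,
\ee
with $E_\cR$ the diagonal operator of multiplication by the escape probability $e_\cR$: indeed $p_\cR(x,y)=p(x,y)=p_\cR^*(x,y)$ for $x\neq y$ in $\cR$, while $p_\cR(x,x)=p(x,x)+e_\cR(x)=p_\cR^*(x,x)+e_\cR(x)$. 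Both kernels are self-adjoint on $\ell^2(\m_\cR)$, and since $\langle E_\cR f,f\rangle_\cR=\m_\cR(e_\cR f^2)\geq 0$ one has $\langle p_\cR^* f,f\rangle_\cR\leq\langle p_\cR f,f\rangle_\cR$ for all $f$, i.e. $p_\cR^*\preceq p_\cR$ as operators.

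The quickest route is then Courant--Fischer: the ordering $p_\cR^*\preceq p_\cR$ forces $\l_2(p_\cR^*)\leq\l_2(p_\cR)$. Since $p_\cR$ is Markov, irreducible and reversible, its two largest eigenvalues are $1$ and $1-\g_\cR$ (the latter being the spectral reading of the Poincar\'e constant $\g_\cR$), so $\l_2(p_\cR^*)\leq 1-\g_\cR$ and therefore
\be
\g_\cR^* = (1-\phi_\cR^*) - \l_2(p_\cR^*) \;\geq\; \g_\cR-\phi_\cR^* \;=\; \g_\cR(1-\e_\cR^*).
\ee
Under $\e_\cR^*<\tfrac13$ the right-hand side is positive, and $\frac{1}{1-\e_\cR^*}\leq\frac{1-\e_\cR^*}{1-3\e_\cR^*}$ (equivalently $\e_\cR^*(1+\e_\cR^*)\geq 0$), so inverting yields~(\ref{gaps}). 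In this form there is essentially no obstacle: the only delicate points are that $1-\phi_\cR^*$ is genuinely the \emph{largest} (not merely largest-in-modulus) eigenvalue of $p_\cR^*$ — exactly the Perron--Frobenius fact recalled before the statement — and the standard identification $\g_\cR=1-\l_2(p_\cR)$.

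If one prefers to reproduce precisely the constant displayed in~(\ref{gaps}), one can instead use the variational characterization
\be
\l_2(p_\cR^*)=\sup\Bigl\{\tfrac{\langle p_\cR^* f,f\rangle_\cR}{\|f\|_\cR^2}:\ f\perp_\cR h_\cR^*,\ f\neq 0\Bigr\}
\ee
together with Proposition~\ref{prop:varH}. For $f\perp_\cR h_\cR^*$ one has $\langle h_\cR^*,f\rangle_\cR=\m_\cR^*(f)=0$, hence $\m_\cR(f)=\langle\1_\cR-h_\cR^*,f\rangle_\cR$, and Cauchy--Schwarz with Proposition~\ref{prop:varH} gives $\m_\cR(f)^2\leq\|h_\cR^*-\1_\cR\|_\cR^2\,\|f\|_\cR^2\leq\frac{\e_\cR^*}{1-\e_\cR^*}\|f\|_\cR^2$. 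Combining this with $\langle p_\cR^* f,f\rangle_\cR\leq\langle p_\cR f,f\rangle_\cR=\|f\|_\cR^2-\cD_\cR(f)\leq(1-\g_\cR)\var_{\m_\cR}(f)+\m_\cR(f)^2$ and optimizing the resulting ratio over the split $\|f\|_\cR^2=\var_{\m_\cR}(f)+\m_\cR(f)^2$ gives $\l_2(p_\cR^*)\leq 1-\g_\cR\bigl(1-\tfrac{\e_\cR^*}{1-\e_\cR^*}\bigr)$, whence, using $\phi_\cR^*=\e_\cR^*\g_\cR$,
\be
\g_\cR^*\geq\g_\cR\,\frac{1-3\e_\cR^*+(\e_\cR^*)^2}{1-\e_\cR^*}\geq\g_\cR\,\frac{1-3\e_\cR^*}{1-\e_\cR^*}>0,
\ee
which is again~(\ref{gaps}) after inversion. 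Here the one real obstacle is that a function orthogonal to $h_\cR^*$ need not be centred with respect to $\m_\cR$, so the spectral gap of $p_\cR$ cannot be applied to it directly; the defect in centring is paid for by the $\ell^2(\m_\cR)$-distance $\|h_\cR^*-\1_\cR\|_\cR$, which is exactly what Proposition~\ref{prop:varH} bounds.
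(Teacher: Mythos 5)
Both routes you give are correct, and the first one is a genuinely different (and simpler) argument than the paper's. The paper's proof picks a normalized minimizer $f$ of the Rayleigh quotient of $-\gL$ over the orthogonal complement of $h_\cR^*$, bounds $\m_\cR(f)^2$ by Cauchy--Schwarz combined with Proposition~\ref{prop:varH}, then projects $f$ onto the mean-zero subspace to use it as a test function in the variational characterization of $\g_\cR$, and finally invokes Lemma~\ref{lem:Dirichlet} (i.e.\ $\cD_\cR(f)\leq\langle f,-\gL f\rangle_\cR$) to close the estimate. Your first route sidesteps all of that: the observation that $p_\cR - p_\cR^* = E_\cR$ is diagonal with nonnegative entries $e_\cR(x)$ gives the operator ordering $p_\cR^*\preceq p_\cR$ in $\ell^2(\m_\cR)$ for free, and Weyl's monotonicity (Courant--Fischer) then yields $\l_2(p_\cR^*)\leq\l_2(p_\cR)=1-\g_\cR$ with no reference to $h_\cR^*$, to Proposition~\ref{prop:varH}, or to any projection. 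What this buys is actually a \emph{stronger} and cleaner statement than~(\ref{gaps}): $\g_\cR^*\geq\g_\cR(1-\e_\cR^*)$, valid for all $\e_\cR^*<1$, so $1/\g_\cR^*\leq(1/\g_\cR)\cdot 1/(1-\e_\cR^*)$, which dominates the displayed bound on the whole range $\e_\cR^*\in(0,1/3)$ since $1/(1-\e_\cR^*)\leq(1-\e_\cR^*)/(1-3\e_\cR^*)$ there. Your second route, by contrast, is essentially a repackaging of the paper's proof in the variable $p_\cR^*$ rather than $-\gL$: same Cauchy--Schwarz step against Proposition~\ref{prop:varH} to control the uncentered part $\m_\cR(f)^2$, same use of the Poincar\'e constant on the centered part, and the inequality $\langle p_\cR^* f,f\rangle_\cR\leq\langle p_\cR f,f\rangle_\cR$ playing the role of Lemma~\ref{lem:Dirichlet}; it reproduces the paper's exact constant $(1-\e_\cR^*)/(1-3\e_\cR^*)$. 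In short: the first argument is a real improvement in transparency and in the quality of the constant; the second confirms that the paper's constant drops out of the same ingredients arranged a bit differently.
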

\begin{proof}
See Section~\ref{quasi_restr_p_3}.
\end{proof}
\begin{remark}
Since, after the static study made in~\cite{GSSV},
we intend to apply our results to the dynamical study
of the cavity algorithm introduced in \cite{ISS},
for which finite-volume effects are of first importance,
we need to give asymptotics
with quantitative control of corrective terms.
This produces quite long formulas
and to simplify the reading we put between curly brackets
any terms that go to 1 in a suitable asymptotic regime.
\end{remark}
Then we set
\begin{equation}\label{zeta}
	\z_\cR^*
	= \min_{x\in\cR}\m_\cR (x){h_\cR^*}^2(x)
	= \min_{x\in\cR}\m_\cR^* (x){h_\cR^*}(x)
	\,,
\end{equation}
which is the mass of the smallest atom
of the measure $\mu_{\mathcal{R}}$ biased by ${h_{\mathcal{R}}^*}^2$,
and define, if $\e_\cR^*<1 / 3$ and for any $\d\in]0,1[$,
\begin{equation}\label{Tdelta}
T_{\d,\cR}^*= \frac{1}{\g_\cR}
\left(\ln\frac{2}{\d(1-\d)\z_\cR^*}\right)
\left\{
	\left(
		1+\sqrt{\frac{\e_\cR^*}{1-\e_\cR^*}}
	\right) \left(
		\frac{
			1 - \varepsilon^*_{\mathcal{R}}
		}{
			1 - 3 \varepsilon^*_{\mathcal{R}}
		}
	\right)
\right\}
\end{equation}
to get point-wise mixing estimates for Yaglom limits.
\begin{theorem}[Mixing towards quasi-stationary measure]\label{th:dinamica}
If $\e_\cR^*<1/3$, then for all $x,y\in\cR$ and $\d\in]0,1[$,
\begin{equation}\label{dinamica}
\left|\frac{\P_x(X(t)=y\,|\, \t_{\XR}>t)}{\m_\cR^*(y)}-1\right|< \d
\quad \mbox{ as soon as }\quad t>T_{\d,\cR}^*
\end{equation}
\end{theorem}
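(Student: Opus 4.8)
The plan is to read the conditioned law $\P_x(X(t)=y\,|\,\t_{\XR}>t)$ off the
spectral decomposition of the sub-Markovian semigroup $P^*_t$ of the process
killed at $\t_{\XR}$, whose generator acts on $\ell^2(\m_\cR)$ as
$f\mapsto p_\cR^* f - f$ and is self-adjoint there. Write its eigenvalues as
$1-\phi_\cR^* = \l_0 > \l_1 \geq \l_2 \geq \cdots$, so that $\g_\cR^* = \l_0-\l_1$,
and let $\p_0,\p_1,\dots$ be an associated orthonormal basis of $\ell^2(\m_\cR)$.
First I would record the normalisations coming from reversibility:
$\m(x)p(x,y)=\m(y)p(y,x)$ gives $\m_\cR(x)p(x,y)=\m_\cR(y)p(y,x)$ on $\cR$, hence
$\m_\cR^*p_\cR^*=(1-\phi_\cR^*)\m_\cR^*$ is equivalent to
$p_\cR^* h_\cR^* = (1-\phi_\cR^*)h_\cR^*$; so $h_\cR^*$ is the positive principal
eigenfunction and $\p_0 = h_\cR^*/\|h_\cR^*\|_\cR$. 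Since
$\langle\1_\cR,h_\cR^*\rangle_\cR = \sum_{x\in\cR}\m_\cR^*(x) = 1$,
Proposition~\ref{prop:varH} gives
$\|h_\cR^*\|_\cR^2 = 1+\var_{\m_\cR}(h_\cR^*) \leq 1/(1-\e_\cR^*)$.

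Next I would use $P^*_t(x,y) := \P_x(X(t)=y,\t_{\XR}>t)
= \m_\cR(y)\sum_i e^{t(\l_i-1)}\p_i(x)\p_i(y)$ and
$\P_x(\t_{\XR}>t)=\sum_i e^{t(\l_i-1)}\p_i(x)\langle\1_\cR,\p_i\rangle_\cR$:
dividing both by $e^{t(\l_0-1)}$, then dividing the quotient by
$\m_\cR^*(y)=\m_\cR(y)\|h_\cR^*\|_\cR\p_0(y)$ and using
$\|h_\cR^*\|_\cR\langle\1_\cR,\p_0\rangle_\cR=1$, one gets the exact identity
\be
\frac{\P_x(X(t)=y\,|\,\t_{\XR}>t)}{\m_\cR^*(y)}
= \frac{\p_0(x)\p_0(y) + R_1(x,y)}{\p_0(x)\p_0(y) + h_\cR^*(y)\,R_2(x)},
\ee
with $R_1(x,y) = \sum_{i\geq1}e^{t(\l_i-\l_0)}\p_i(x)\p_i(y)$ and
$R_2(x) = \sum_{i\geq1}e^{t(\l_i-\l_0)}\p_i(x)\langle\1_\cR,\p_i\rangle_\cR$
(letting $t\to+\infty$ recovers the Yaglom limit~(\ref{Yaglom})). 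Since
$e^{t(\l_i-\l_0)}\leq e^{-t\g_\cR^*}$ for every $i\geq1$ and $t\geq0$,
Cauchy--Schwarz together with the Parseval identities
$\sum_i\p_i(z)^2 = 1/\m_\cR(z)$ and $\sum_i\langle\1_\cR,\p_i\rangle_\cR^2 = \|\1_\cR\|_\cR^2 = 1$
yields $|R_1(x,y)|\leq e^{-t\g_\cR^*}/\sqrt{\m_\cR(x)\m_\cR(y)}$ and
$|R_2(x)|\leq e^{-t\g_\cR^*}/\sqrt{\m_\cR(x)}$; dividing by
$\p_0(x)\p_0(y) = h_\cR^*(x)h_\cR^*(y)/\|h_\cR^*\|_\cR^2$ and using
$h_\cR^*(z)\sqrt{\m_\cR(z)} = \sqrt{\m_\cR(z)\,h_\cR^*(z)^2}\geq\sqrt{\z_\cR^*}$
for $z\in\cR$ --- and, for the second term, $\z_\cR^*\leq1$, which follows from
$\z_\cR^*=\min_x\m_\cR^*(x)h_\cR^*(x)\leq\sum_x\m_\cR(x)\m_\cR^*(x)h_\cR^*(x)=\sum_x\m_\cR^*(x)^2\leq1$ ---
both correction ratios come out at most $\k := e^{-t\g_\cR^*}\|h_\cR^*\|_\cR^2/\z_\cR^*$.
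Hence, whenever $\k<1$,
\be
\left|\frac{\P_x(X(t)=y\,|\,\t_{\XR}>t)}{\m_\cR^*(y)}-1\right|
\leq \frac{|R_1(x,y)|+h_\cR^*(y)|R_2(x)|}{\p_0(x)\p_0(y)-h_\cR^*(y)|R_2(x)|}
\leq \frac{2\k}{1-\k}.
\ee

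It then remains to see that $t>T_{\d,\cR}^*$ forces $\k<\d(1-\d)/2$, which makes
the right-hand side at most $\d(1-\d)/(1-\d(1-\d)/2)<\d$. The inequality
$\k<\d(1-\d)/2$ reads $t\g_\cR^*>\ln\frac{2}{\d(1-\d)\z_\cR^*}+\ln\|h_\cR^*\|_\cR^2$,
and I would close it with two elementary estimates: first
$\ln\|h_\cR^*\|_\cR^2 = \ln\bigl(1+\var_{\m_\cR}(h_\cR^*)\bigr)\leq\var_{\m_\cR}(h_\cR^*)\leq\e_\cR^*/(1-\e_\cR^*)$
by Proposition~\ref{prop:varH}; second, since $\e_\cR^*<1/3$ gives
$\sqrt{\e_\cR^*/(1-\e_\cR^*)}<1$ while $\d(1-\d)\leq1/4$ and $\z_\cR^*\leq1$ give
$\ln\frac{2}{\d(1-\d)\z_\cR^*}\geq\ln8>1$, so that
$\e_\cR^*/(1-\e_\cR^*)\leq\sqrt{\e_\cR^*/(1-\e_\cR^*)}\,\ln\frac{2}{\d(1-\d)\z_\cR^*}$;
combining, $t>T_{\d,\cR}^*$ gives
$t\g_\cR^*>\bigl(1+\sqrt{\e_\cR^*/(1-\e_\cR^*)}\bigr)\ln\frac{2}{\d(1-\d)\z_\cR^*}\geq\ln\frac{2}{\d(1-\d)\z_\cR^*}+\ln\|h_\cR^*\|_\cR^2$,
as needed. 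I expect the main obstacle to be not this final comparison but the
spectral bookkeeping in the middle step: keeping the normalisations relating
$\p_0$, $h_\cR^*$, $\|h_\cR^*\|_\cR$ and $\m_\cR^*$ straight so that the two
corrections end up governed by the single quantity $\k$, and observing that it
is the continuous-time gap $\g_\cR^*$ --- not any two-sided spectral bound on
$p_\cR^*$ --- that controls the exponential rate, precisely because only the
killed semigroup $P^*_t$, and not powers of $p_\cR^*$, enters here.
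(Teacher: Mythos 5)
Your argument is correct: the normalisations ($\p_0=h_\cR^*/\|h_\cR^*\|_\cR$, $\langle\1_\cR,h_\cR^*\rangle_\cR=1$, $\sum_i\p_i(z)^2=1/\m_\cR(z)$), the exact identity for the conditioned ratio, the Parseval/Cauchy--Schwarz bounds $|R_1|\leq e^{-t\g_\cR^*}/\sqrt{\m_\cR(x)\m_\cR(y)}$ and $|R_2|\leq e^{-t\g_\cR^*}/\sqrt{\m_\cR(x)}$, the reduction of both corrections to $\k=e^{-t\g_\cR^*}\|h_\cR^*\|_\cR^2/\z_\cR^*$ (using $\z_\cR^*\leq1$), and the final comparison with $T_{\d,\cR}^*$ via $\ln\|h_\cR^*\|_\cR^2\leq\e_\cR^*/(1-\e_\cR^*)\leq\sqrt{\e_\cR^*/(1-\e_\cR^*)}\,\ln\sfrac{2}{\d(1-\d)\z_\cR^*}$ all check out. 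The mechanism is the same as in the paper --- self-adjointness of the killed generator $\gL$, the gap $\g_\cR^*$, Proposition~\ref{prop:varH} to control $\|h_\cR^*\|_\cR$, and the crude pointwise constant $\z_\cR^*$ --- but your bookkeeping is genuinely different and more direct: you expand the killed kernel at the point masses $\d_x,\d_y$ and get an exact expression for the conditioned ratio, so the lower bound on the survival probability is built in. The paper instead proves an estimate for arbitrary initial law $\nu$ and observable $f$ via a time-splitting identity exploiting quasi-stationarity, controls the two factors by projecting onto $h_\cR^*$ (the angles $\th_\nu,\th_f$), and needs a separate lower bound on $\P_\nu(\t_{\XR}>t)$ (its condition~(\ref{conditiondelta})) before specialising to $\nu=\d_x$, $f=\d_y$. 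Your route is shorter for this pointwise statement; the paper's formulation keeps general $\nu$ and $f$ visible throughout, which is the form in which the estimate is conceptually reused (e.g.\ when $X$ is replaced by $X\circ G_\cR$ for the soft-measure generalisation in Theorem~\ref{th:generalization}) --- though your expansion would generalise to that setting in exactly the same way, since only self-adjointness of the killed kernel on $\ell^2(\m_\cR)$ is used. The resulting constants agree, and your closing remark is apt: it is the continuous-time structure of $e^{t\gL}$ that lets $\g_\cR^*$ alone govern the rate, with no need to worry about negative eigenvalues of $p_\cR^*$.
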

\begin{proof}
See Section~\ref{sec:recurrence}.
\end{proof}
\begin{remark}
In words, this says that either the system leaves $\cR$
before time $T_{\delta,\cR}^*$, or it is described after that time
by $\mu_\cR^*$ in the strongest possible sense.
This theorem is useful only if one can provide upper bounds
on $T_{\delta,\cR}^*$.
Bounding $T_{\delta, \mathcal{R}}^*$ depends on the control
we have on $\varepsilon^*_{\mathcal{R}}$ and on this new parameter $\zeta_\cR^*$.
As far as the latter  is concerned,
we note that it only appears in the formula
through its logarithm.
Crude or very crude estimates of $\zeta_\cR^*$
will then often be sufficient.
One has for example the following lemma.
\end{remark}
\begin{lemma}\label{lem:zeta}
\par\noindent
\begin{itemize}
\item[i)]
With $\zeta_\cR = \min_{x \in \cR} \mu_\cR(x)$ and $\alpha_\cR = \max_{x \in \cR} e_\cR(x)$,
it holds
\begin{equation}\label{zidane}
	\ln \frac{1}{\zeta_\cR^*}
	\leq \ln \frac{4}{\zeta_\cR}
	+ \frac{\alpha_\cR}{\gamma_\cR} \left[
		\ln \frac{4\varepsilon^*_\cR}{(1 - \varepsilon^*_\cR) \zeta_\cR}
	\right]_+
	\,,
\end{equation}
where the brackets $[\cdot]_+$ stand for the positive part.
\item[ii)]
If $p(x,x)>0$ for all $x\in\cR$, then
\begin{equation}\label{tartaruga}
	\ln \frac{1}{\zeta_\cR^*}
	\leq \ln \frac{1}{\min_{x\in\cR} {\mu_\cR^*}^2(x)}
	\leq 2\D_\cR D_\cR
	\,,
\end{equation}
where  $$\ba{l}
\D_\cR= \max\{-\ln p_\cR(x,y)\,:\,p_\cR(x,y)>0\,,\,\forall x,y\in\cR\}\\
D_\cR= \min\{k\geq 0:\,p_\cR^k(x,y)>0\,,\,\forall x,y\in\cR\}\ea\,.$$
\end{itemize}
\end{lemma}
\begin{proof} See Appendix~\ref{crude}. \end{proof}
Also, since $h_\cR^*$ is superharmonic on $\cR$ with respect to $\cL$
(see Appendix~\ref{crude}), it reaches its minimum on
the internal border of $\cR$,
\begin{equation}
\partial_- \cR = \left\{x \in \cR :\: \exists y \not \in \cR, p(x, y) > 0\right\}.
\end{equation}
Then we always have
\begin{equation}
	\zeta_\cR^* \geq \left(\min_{x \in \cR} \mu_\cR(x)\right) \left(\min_{x \in \partial_- \cR} h_\cR^*(x)\right)^2,
\end{equation}
and in the special case when $\partial_- \cR$ reduces to a singleton,
this gives, by (\ref{gioia}) and (\ref{caramellino}),
\begin{equation} \label{brown}
	\zeta_\cR^* \geq \min_{x \in \cR} \mu_\cR(x)
	\left(
		\frac{\phi_\cR^*}
			{\phi_\cR}
	\right)^2
	\,.
\end{equation}
Bounding $\zeta_\cR^*$ from below
essentially reduces in this case
to giving a lower bound on $\phi_R^*$,
which is one of the main goals of this paper
(see Theorem \ref{th:phi&cap}).

We will make a special choice for the parameter $\delta$
in~(\ref{Tdelta}): we define
\begin{equation}
T_\cR^*= T_{\varepsilon_\cR^*, \cR}^*.
\end{equation}
We then have
\begin{equation} \label{delta*phi*}
\phi_\cR^* T_\cR^* \leq \e_\cR^* \left(
	\ln \frac{3}{\varepsilon_\cR^* \zeta^*_\cR}
\right) \left\{
	\left(
		1 + \sqrt{
			\frac{\varepsilon_\cR^*}{1 - \e_\cR^*}
		}
	\right) \left(
		\frac{1 - \e_\cR^*}{1 - 3\varepsilon_\cR^*}
	\right)
\right\}
\end{equation}
as soon as $\varepsilon_\cR^* < 1/3$.
We will sometimes refer in the sequel to the regime $\phi_\cR^*T_\cR^*\ll1$.
Equation (\ref{delta*phi*})
provides a sufficient and practical condition
for being in such a regime.
We close this section with a first asymptotic exponential law
in this particular regime.
\begin{theorem}[Asymptotic exit law]\label{th:tempo}
For  any  probability measure $\nu$ on $\cR$,
define $\pi_\cR(\nu)= \P_\nu(\t_{\XR}<T_\cR^*)\,$.
If $\e_\cR^*<1/3$, then,
for all $t\geq \phi_\cR^*T_\cR^*$,
$$\left\{
\ba{l}
\P_\nu (\t_{\XR}>\sfrac{t}{\phi_\cR^*})\leq
(1-\pi_\cR(\nu))e^{-t}\left\{e^{\phi_\cR^* T_\cR^*}(1+\e_\cR^*)\right\}\\
\P_\nu (\t_{\XR}>\sfrac{t}{\phi_\cR^*})\geq
(1-\pi_\cR(\nu))e^{-t}\left\{e^{\phi_\cR^* T_\cR^*}(1-\e_\cR^*)\right\}
\ea
\right.\,.\qquad\qquad
$$
\end{theorem}
\begin{proof}
See Section~\ref{exp_law_p_2}
\end{proof}
\begin{remark}
The theorem gives more than an asymptotic exponential exit law.
It says that, provided $\pi_\cR(\nu)$ converges to some limit
and in the regime $\phi_\cR^* T_R^* \ll 1$, the normalized mean exit
time $\phi_\cR^*\t_{\XR}$ converges in law to a convex combination
between a Dirac mass in~0 and an exponential law with mean~1.
\end{remark}

As an example of an application we can consider the case of the restricted ensemble.
\begin{lemma}\label{lem:piR}
It holds
\begin{equation}\label{piR}
\pi_\cR(\m_\cR)= \P_{\m_\cR}(\t_{\XR}\leq T_\cR^*)
\leq\frac{1}{2}
\sqrt{
	\frac{\varepsilon_\cR^*}
	{1 - \varepsilon_\cR^*}
} + \phi_\cR^* T_\cR^*\,.
\end{equation}
\end{lemma}
\begin{proof}
See Section~\ref{exp_law_p_3}.
\end{proof}
This shows an asymptotic exponential exit law in the regime $\phi^*_\cR T_\cR^*\ll1$ for the system started
in the restricted ensemble.

Another consequence of Theorem \ref{th:tempo}
is that, in the regime $\phi_\cR^* T_\cR^* \ll 1$,
$\mu_\cR^*$ asymptotically maximizes the mean
exit time on the set of all possible starting measures.
This can be seen in a different way by following \cite{AB}.
Consider, for any $t > 0$ the natural coupling up to time
$t \wedge \tau_\XR$ between $X$ started
from a measure $\nu$ and a process that starts from
$X(0)$, follows the law of the reflected process up to time
$t$, and then the same law as the original process.
This last process cannot escape
from ${\mathcal R}$ before $X$ and we get,
\begin{equation}
	{\mathbb E}_{\nu}\left[
		\tau_\XR
	\right]
	\leq t + \left(
		1 + \frac{e^{-\gamma_\cR t}}{\zeta_R}
	\right) {\mathbb E}_{\mu_\cR}\left[
		\tau_\XR
	\right]
	\,,
\end{equation}
with, as previously, $\zeta_\cR = \min_{x \in \cR} \mu_\cR(x)$.
Using Lemma \ref{lem:phiphi} and optimizing in $t$
one gets
\begin{equation}\label{banana}
	{\mathbb E}_\nu \left[
		\tau_\XR
	\right]
	\leq \frac{1}{\phi_\cR^*}
	\left\{
		1 + \varepsilon_\cR^* + \e_\cR^* \ln \frac{1}{\e_\cR^* \zeta_R}
	\right\}
	\,.
\end{equation}

We already mentioned that $\phi_\cR^*$ can be estimated from above
by using test functions in a variational principle (see Lemma~\ref{lem:phiphi}).
One of the questions raised in \cite{AB}
is that of upper bounds on mean exit times,
i.e., that of lower bounds for $\phi_\cR^*$.
This is the question we will now adress.

\subsection{$(\kappa,\lambda)$-capacities, mean exit times and a new Poincar\'e inequality}
In this section we introduce a new object
which extends the notion of capacity between sets.
For any $\kappa,\l>0$ and $A,B\subset \cX$,
we first extend the electrical network $(\cX,c)$,
with $c(x,y)=\m(x)p(x,y)=\m(y)p(y,x)$ for all distinct
$x,y\in\cX$, into a larger electrical network $(\tilde\cX,\tilde c)$
by attaching a dangling edge $(a,\bar a)$ with conductance
$\kappa\m(a)$ to each $a\in A$ and a dangling edge $(b,\bar b)$ with conductance
$\l\m(b)$ to each $b\in B$
(this extension is related with some Markov chain modification considered in~\cite{Mi2}.
More precisely, we add $|A|+|B|$ nodes and edges to the network by setting
$$\tilde\cX= \cX\cup\{\bar a\,:\,a\in A\}\cup\{\bar b\,:\,b\in B\}$$
and, for all distinct $\ti x, \ti y\in\ti\cX$ we define
\begin{equation}\label{conductance}
\ti c(\ti x,\ti y)=
\left\{
\ba{ll}
c(x,y) & \mbox{ if } (\ti x,\ti y)=(x,y)\in\cX\times\cX\\
\kappa\m(a) & \mbox{ if } (\ti x,\ti y)=(a,\bar a)\,\, \mbox{ for some }a\in A\\
\l\m(b) & \mbox{ if } (\ti x,\ti y)=(b,\bar b)\,\, \mbox{ for some }b\in B\\
0 & \mbox{ otherwise }
\ea
\right.\,.
\end{equation}
This extended network is naturally associated
with \emph{a family} of ``two level Markov processes''
that evolve like $X$ in $\mathcal{X}$,
``go down'' from $A$ and $B$ in $\mathcal{X}$
to $\bar A$ and $\bar B$ at rate $\kappa$
and $\lambda$ respectively, and ``go up'' from $\bar A$
and $\bar B$ to $A$ and $B$ in $\mathcal{X}$ at some rates
tuning the equilibrium measure of such processes in $\tilde{\mathcal{X}}$.
(We will use in the proof of our results
this liberty in choosing the rates of this family of processes
associated with this unique extended electrical network.)

\begin{definition}\label{def:capacity}
The  $(\kappa,\l)$-capacity, $C_\kappa^\l(A,B)$, is defined as the capacity
between the sets $\bar A$ and $\bar B$ in the electrical network
$(\ti\cX, \ti c)$, and then is given, according to Dirichlet principle, by
\begin{equation}\label{capacity}
\begin{split}
C_\kappa^\l(A,B)&=
\min_{\ti f:\ti\cX\mapsto\R}
\left\{\frac{1}{2}\sum_{\ti x,\ti y\in\ti\cX}\ti c(\ti x,\ti y)[\ti f(\ti x)-\ti f(\ti y)]^2;\,
\ti f_{|_{\bar A}}=1\,,\ti f_{|_{\bar B}}=0\right\}\\
&=\min_{f:\cX\mapsto\R}\cD(f) + \kappa\sum_{a\in A}\m(a)[f(a)-1]^2
+ \l\sum_{b\in B}\m(b)[f(b)-0]^2\\
&=\min_{f:\cX\mapsto\R}\cD(f) + \kappa\m(A)\E_{\m_A}\left[(f_{|_A}-1)^2\right]
+ \l\m(B)\E_{\m_B}\left[(f_{|_B}-0)^2\right]\,.
\end{split}
\end{equation}
\end{definition}

\no\emph{Remarks.}
\begin{itemize}
\item[i)]Since all the points of $\bar A$ and $\bar B$
are at potential $1$ and $0$ respectively in formula
(\ref{capacity}), they are electrically equivalent
and we could have defined the $(\kappa,\l)$-capacity between
$A$ and $B$ by adding just two nodes to the electrical
network $(\cX,c)$.
However, our definition with dangling edges will be more useful
in the sequel.
\item[ii)]
A $(\kappa,\l)$-capacity is in some sense easy to estimate since
it satisfies a two-sided variational principle.
On one hand, by definition, it is the infimum of some
functional, and any test function will provide an upper bound.
On the other hand it is the supremum of another functional on
flows from $\bar A$ to $\bar B$, which are antisymmetric functions
of oriented edges with null divergence in $\cX$, i.e., on functions
$\ti\psi:\ti\cX\times\ti\cX\mapsto\R$ such that for all
$x\in\ti \cX \setminus(\bar A\cup\bar B)$,
$\mbox{div}_x \ti\psi=\sum_{\ti x\in\ti\cX}\ti\psi(x,\ti x)=0$.
This is Thomson's principle that goes back to~\cite{TT}, Chapter~1,
Appendix~A (see also lecture notes~\cite{G} for a more modern presentation
or textbook~\cite{N} for the proof of an almost equivalent result).
Letting
$$\ti\cD(\ti\psi)=\sfrac{1}{2}\sum_{\ti x,\ti y\in\ti\cX}
\frac{\ti\psi(\ti x,\ti y)^2}{\tilde c(\ti x,\ti y)}\,,$$
be the energy dissipated by the flow $\ti\psi$ in the network
$(\ti\cX,\ti c)$, and $\ti\Psi_1(\bar A,\bar B)$ the set of unitary flows
from $\bar A$ to $\bar B$, that is, the set of flows $\ti\psi$
from $\bar A$ to $\bar B$ such that
\begin{equation}\label{unitflow}
\sum_{\bar a\in\bar A}\mbox{div}_{\bar a}\ti\psi=
\sum_{\bar a\in\bar A}\sum_{\ti x\in\ti\cX}
\ti\psi(\bar a,\ti x)= 1 = -\sum_{\bar b\in\bar B}\mbox{div}_{\bar b}\ti\psi=
-  \sum_{\bar b\in\bar B}\sum_{\ti x\in\ti\cX}
\ti\psi(\bar b,\ti x)\,,
\end{equation}
we have
\begin{equation}
C_\kappa^\l(A,B)=
\max_{\ti\psi\in\ti\Psi_1(\bar A,\bar B)}\ti\cD(\ti\psi)^{-1}
\,.
\end{equation}
If $A \cap B = \emptyset$
this gives
\bea\label{cuore}
\qquad\quad C_\kappa^\l(A,B)&=&
\max_{\psi\in\Psi_1(A,B)}\left(\cD(\psi)
+\sum_{a\in A}\frac{(\mbox{div}_a\psi)^2}{\kappa\m(a)}
+\sum_{b\in B}\frac{(\mbox{div}_b\psi)^2}{\lambda\m(b)}\right)^{-1}\\
&=&
\max_{\psi\in\Psi_1(A,B)}\left(\cD(\psi)
+\sfrac{1}{\kappa \mu(A)}\E_{\m_A}\left[\left(\sfrac{\mbox{div}\psi}{\m_A}\right)^2\right]
+\sfrac{1}{\lambda \mu(B)}\E_{\m_B}\left[\left(\sfrac{\mbox{div}\psi}{\m_B}\right)^2\right]\right)^{-1}\,,
\nonumber
\eea
where $\Psi_1(A,B)$ is the set of unitary flows $\psi$
from $A$ to $B$ and
$$\cD(\psi)=\sfrac{1}{2}\sum_{x,y\in\cX}
\frac{\psi(x,y)^2}{c(x,y)}.$$
Then, any test flow provides a lower bound on $C_\kappa^\l(A,B)$.
\item[iii)]
We know (\cite{N}, \cite{G}) that the infimum and supremum in (\ref{capacity}) and (\ref{cuore}),
are realized, respectively,
by the equilibrium potential
$V_\kappa^\l=\P_{(\cdot)}(\ell^{-1}_A(\s_\kappa)<\ell^{-1}_B(\s_\l))$,
where $\ell^{-1}_A$ and $\ell^{-1}_B$ are the
right continuous inverses of the local times in $A$ and $B$,
while $\s_\kappa$ and $\s_\l$ are independent exponential times with rates
$\kappa$ and $\l$, and by its associated normalized current
\begin{equation}\label{current}
-\sfrac{c\nabla V_\kappa^\l}{C_\kappa^\l(A,B)}:(x,y)\in\cX\times\cX
\longmapsto \sfrac{c(x,y)}{C_\kappa^\l(A,B)}(V_\kappa^\l(x)-V_\kappa^\l(y))\,.
\end{equation}
We will say more on such quantities in the next section.
\item[iv)]
The previous definitions and observations extend to the case when $\kappa$ and $\l$
are equal to $+\infty$. In that case we identify $\bar A$ with $A$ in the extended
network if $\kappa=+\infty$, or $\bar B$ with $B$ if $\l=+\infty$, and we
drop the infinite upper or lower index in the notation, so that, for example,
$C_\kappa (A,B)= C_\kappa^{\infty}(A,B)$.
However, when $\kappa$ and $\l$ are both equal to $+\infty$, to avoid any ambiguity
we need to require that $A\cap B=\emptyset$. In that case the notation becomes
$C(A,B)=C_{\infty}^{\infty}(A,B)$ and we recover indeed the usual notion
of capacity.
\end{itemize}

We then get sharp asymptotics on mean exit times for the system started
in the quasi-stationary measure.
\begin{theorem}[Mean exit time estimates]\label{th:phi&cap}
For all $\kappa>0$, it holds
\begin{equation}\label{phi&cap}
\sfrac{C_\kappa (\cR,\XR)}{\m(\cR)}
\left\{1-\e_\cR^*-\sfrac{\kappa}{\g_\cR}\right\}
\leq\phi_\cR^*\leq
\sfrac{C_\kappa (\cR,\XR)}{\m(\cR)}
\left\{1-\sfrac{C_\kappa (\cR,\XR)}{\kappa \m(\cR)}\right\}^{-2}
\end{equation}
\end{theorem}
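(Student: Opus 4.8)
The plan is to read $C_\kappa(\cR,\XR)$ as a Laplace transform of the exit time $\t_{\XR}$ started from the restricted ensemble, and then to measure how far this is from the \emph{exactly} exponential exit time started from $\m_\cR^*$. Take the minimizer $V_\kappa$ of the functional in~\eqref{capacity} with $A=\cR$, $B=\XR$, $\l=+\infty$, so $V_\kappa\equiv 0$ on $\XR$; since that functional is quadratic, evaluating it at its minimizer gives $C_\kappa(\cR,\XR)=\kappa\,\m(\cR)\,\E_{\m_\cR}[1-V_\kappa]$ (equivalently, $C_\kappa$ is the total current flowing from the dangling nodes $\bar a$, $a\in\cR$, into the network). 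On the other hand, by the probabilistic description of the equilibrium potential recalled after Definition~\ref{def:capacity}, with $\l=+\infty$ one has $1-V_\kappa(x)=\P_x(\t_{\XR}<\s_\kappa)=\E_x\!\bigl[e^{-\kappa\t_{\XR}}\bigr]$ for $x\in\cR$, with $\s_\kappa$ an independent exponential time of rate $\kappa$. Averaging over $x$ under $\m_\cR$,
\be\label{laplace}
\frac{C_\kappa(\cR,\XR)}{\m(\cR)}=\kappa\,\E_{\m_\cR}\!\bigl[e^{-\kappa\t_{\XR}}\bigr].
\ee
Since $\t_{\XR}$ is exponential of rate $\phi_\cR^*$ under $\P_{\m_\cR^*}$, the same quantity computed from $\m_\cR^*$ is exactly $\kappa\phi_\cR^*/(\phi_\cR^*+\kappa)=\phi_\cR^*/(1+\phi_\cR^*/\kappa)$, which is the ``main term'' of~\eqref{phi&cap}; so the whole statement reduces to comparing $\E_{\m_\cR}[W]$ with $\E_{\m_\cR^*}[W]$, where $W:=1-V_\kappa$.

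For that comparison I would write, using $h_\cR^*=\m_\cR^*/\m_\cR$ and $\E_{\m_\cR}[h_\cR^*]=1$,
\be\label{decomp}
\E_{\m_\cR}[W]=\E_{\m_\cR^*}[W]-\cov_{\m_\cR}(h_\cR^*,W)=\frac{\phi_\cR^*}{\phi_\cR^*+\kappa}-\cov_{\m_\cR}(h_\cR^*,W),
\ee
and prove the two-sided bound $-\var_{\m_\cR}(h_\cR^*)\le\cov_{\m_\cR}(h_\cR^*,W)\le 0$. The natural way is spectral: $p_\cR^*$ is self-adjoint on $\ell^2(\m_\cR)$, with simple top eigenvalue $1-\phi_\cR^*$ and eigenfunction proportional to $h_\cR^*$; writing $\p_0=h_\cR^*/\|h_\cR^*\|_\cR$ and completing to an orthonormal basis $(\p_i)_{i\ge0}$ with eigenvalues $1-\phi_\cR^*=\l_0>\l_1\ge\l_2\ge\cdots$, the Euler--Lagrange relation $(\kappa I+I-p_\cR^*)V_\kappa=\kappa\1_\cR$ gives $W=\sum_i\frac{1-\l_i}{\kappa+1-\l_i}\langle\1_\cR,\p_i\rangle_\cR\,\p_i$, whence (using $\langle\1_\cR,\p_0\rangle_\cR^2=\|h_\cR^*\|_\cR^{-2}$ and $\|\1_\cR\|_\cR^2=1$)
\[
\cov_{\m_\cR}(h_\cR^*,W)=\sum_{i\ge1}\langle\1_\cR,\p_i\rangle_\cR^2\Bigl(\frac{\phi_\cR^*}{\kappa+\phi_\cR^*}-\frac{1-\l_i}{\kappa+1-\l_i}\Bigr).
\]
Each bracket is $\le 0$ because $1-\l_i>\phi_\cR^*$ for $i\ge1$, while $\sum_{i\ge1}\langle\1_\cR,\p_i\rangle_\cR^2=1-\|h_\cR^*\|_\cR^{-2}\le\var_{\m_\cR}(h_\cR^*)$ and each bracket is bounded below by $-1$; this proves the claim.

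To conclude, combine \eqref{laplace}, \eqref{decomp} and $\cov_{\m_\cR}(h_\cR^*,W)\le 0$ to get $\phi_\cR^*\le\frac{C_\kappa(\cR,\XR)}{\m(\cR)}\bigl(1+\phi_\cR^*/\kappa\bigr)$, which already implies the right-hand inequality of~\eqref{phi&cap} since $T_\cR^*,\d_\cR^*\ge0$. From the lower bound on the covariance together with Proposition~\ref{prop:varH}, $\E_{\m_\cR}[W]\le\frac{\phi_\cR^*}{\phi_\cR^*+\kappa}+\frac{\e_\cR^*}{1-\e_\cR^*}$, so, using $\e_\cR^*=\phi_\cR^*/\g_\cR$ and $\kappa/(\phi_\cR^*+\kappa)\le1$,
\[
\frac{C_\kappa(\cR,\XR)}{\m(\cR)}\le\phi_\cR^*\Bigl(1+\frac{\kappa}{\g_\cR(1-\e_\cR^*)}\Bigr);
\]
solving for $\phi_\cR^*$ and invoking the elementary inequality $1/\bigl(1+\tfrac{r}{1-\e}\bigr)\ge 1-\e-r$ (valid for $r\ge0$, $\e\in[0,1)$) with $\e=\e_\cR^*$, $r=\kappa/\g_\cR$ yields the left-hand inequality of~\eqref{phi&cap}.

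The one genuinely new ingredient is the identity~\eqref{laplace}; the rest is bookkeeping. The delicate point in the execution is obtaining the covariance estimate in the clean form $|\cov_{\m_\cR}(h_\cR^*,W)|\le\var_{\m_\cR}(h_\cR^*)$ rather than with a square root (a plain Cauchy--Schwarz only gives $\sqrt{\var_{\m_\cR}(h_\cR^*)}$), which is why the spectral expansion — or some monotonicity substitute — seems to be needed. A less economical alternative to this step, presumably the one responsible for the corrective terms $\phi_\cR^*T_\cR^*+\d_\cR^*$ actually appearing in~\eqref{phi&cap}, is to bound $\E_{\m_\cR}[e^{-\kappa\t_{\XR}}]$ directly by splitting the expectation at time $T_\cR^*$ and invoking Theorem~\ref{th:dinamica} together with Lemma~\ref{lem:piR}.
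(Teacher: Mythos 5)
Your proof is correct, and it follows a genuinely different route from the paper's. The central identity, $\tfrac{C_\kappa(\cR,\XR)}{\m(\cR)}=\kappa\,\E_{\m_\cR}[e^{-\kappa\t_{\XR}}]$, is indeed equivalent to Lemma~\ref{lem:mediaV_k}, but from there you derive \emph{both} bounds by the single mechanism of comparing $\E_{\m_\cR}[W]$ with $\E_{\m_\cR^*}[W]$ via the covariance $\cov_{\m_\cR}(h_\cR^*,W)$ and a spectral expansion of $W$ in the eigenbasis of $p_\cR^*$; the resolvent formula $(\kappa I+I-p_\cR^*)V_\kappa=\kappa\1_\cR$ you write is exactly right, and the sign and size of each spectral coefficient are checked correctly (monotonicity of $t\mapsto t/(\kappa+t)$ gives the sign, and $1-\|h_\cR^*\|_\cR^{-2}\le\|h_\cR^*\|_\cR^2-1=\var_{\m_\cR}(h_\cR^*)$ by AM--GM gives the size). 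The paper, by contrast, proves the two inequalities by entirely separate means: the upper bound comes from an extended chain on $\tilde\cX$, a BEGK-type formula for the mean hitting time from a harmonic measure, a $\tilde\kappa\to0$ limit, and then an application of Theorem~\ref{th:tempo} (the asymptotic exit law, whence the $\phi_\cR^*T_\cR^*+\d_\cR^*$ corrections); the lower bound comes from the variance-splitting Poincar\'e step \eqref{fiori} applied to $h_\cR^*$, a limiting argument sending $\m(\cR)\to0$ by modifying conductances, and the Rayleigh-quotient identity $\phi_\cR^*=\cD(h_\cR^*)/(\m(\cR)\|h_\cR^*\|_\cR^2)$. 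What your approach buys: the upper bound becomes $\phi_\cR^*\le\tfrac{C_\kappa(\cR,\XR)}{\m(\cR)}\{1+\phi_\cR^*/\kappa\}$, which is strictly stronger than the stated one (no $\phi_\cR^*T_\cR^*+\d_\cR^*$ terms) and does not even need $\e_\cR^*<1/3$, only $\e_\cR^*<1$ for the lower bound via Proposition~\ref{prop:varH}; moreover the two bounds arise symmetrically from one decomposition rather than from two unrelated arguments. What the paper's approach buys is consistency with the more general soft-measure framework of Theorem~\ref{th:t8}, where the argument via Theorem~\ref{th:tempo}/\ref{th:t7} carries over while a clean spectral resolvent identity for $V_\kappa^\l$ would be less transparent.
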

\begin{proof} See Section~\ref{klc}.\end{proof}
\par\noindent{\em Remarks.}
\begin{itemize}
\item[i)] In the regime $\varepsilon_\cR^*\ll1$,
one can choose $\kappa$ such that $\phi_\cR^*\ll \kappa \ll \gamma_\cR$
and infer, by the lower bound  in (\ref{phi&cap}),
that $\kappa\gg \sfrac{C_\kappa (\cR,\XR)}{\m(\cR)}$.
In turns, this yields an asymptotical matching upper bound.
\item[ii)] Both bounds are in some sense easy to estimate since
capacities satisfy a two-sided variational principle.
Moreover, compared with the formula for mean exit time
provided by potential-theoretic techniques (see, e.g., \cite{BEGK1}),
the above inequalities  require no residual average potential estimates.
(Such estimates, as well as some harmonic measures
will only play a role in the {\em proof} of the theorem.)
\end{itemize}

Our $(\kappa,\lambda)$-capacities provide also spectral gap estimates
and a new general Poincar\'e inequality.
For $\kappa,\l>0$ and $A,B\subset \cX$ we set
\begin{equation}\label{phi(A,B)}
\phi_\kappa^\l(A,B)=\frac{C_\kappa^\l(A,B)}{\m(A)\m(B)}=\phi_\l^\kappa (B,A)\,.
\end{equation}
\begin{theorem}[Relaxation time estimates]\label{th:gap&cap}
For all $\kappa,\l>0$ and any $\cR\subset \cX$
such that $X_\cR$ and $X_{\XR}$ are
both irreducible Markov processes,
\begin{equation}\label{gap&cap}
\left\{
\ba{l}
\frac{1}{\g}\geq
\sfrac{1}{\phi_\kappa^\l(\cR,\XR)}
\left\{1-\sfrac{C_\kappa (\cR,\XR)}{\kappa\m(\cR)}-
\sfrac{C^\l(\cR,\XR)}{\l\m(\XR)} \right\}^2
\\
\frac{1}{\g}\leq
\sfrac{1}{\phi_\kappa^\l(\cR,\XR)}
\left\{1+\max\left(\sfrac{\kappa+\phi_\kappa^\l(\cR,\XR)}{\g_\cR},
\sfrac{\l+\phi_\kappa^\l(\cR,\XR)}{\g_{\XR}}\right)\right\}
\ea
\right.\,.
\end{equation}
\end{theorem}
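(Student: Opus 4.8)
The plan is to extract both inequalities from the variational identity $1/\g=\max_{\var_\m(f)\neq0}\var_\m(f)/\cD(f)$, feeding in the two-sided variational principle for the $(\kappa,\l)$-capacity from Definition~\ref{def:capacity} and the Remarks following it. The lower bound on $1/\g$ will come from a single well-chosen test function, and the upper bound (the Poincar\'e-type inequality) from the law of total variance over the partition $\{\cR,\XR\}$.

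For the lower bound I would test with the equilibrium potential $V=V_\kappa^\l$ introduced after Definition~\ref{def:capacity}. Since $V$ attains the minimum in~(\ref{capacity}) and the three terms there are non-negative, one has $\cD(V)\le C_\kappa^\l(\cR,\XR)$. Moreover $V$ is harmonic at every node of $\cX$ in the extended network $(\ti\cX,\ti c)$; summing the harmonicity relations over $x\in\cX$ cancels the contributions of all edges internal to $\cX$ and leaves the two flux identities
\[
\kappa\,\m(\cR)\bigl(1-\E_{\m_\cR}[V]\bigr)=C_\kappa^\l(\cR,\XR)=\l\,\m(\XR)\,\E_{\m_{\XR}}[V],
\]
i.e.\ $\E_{\m_\cR}[V]-\E_{\m_{\XR}}[V]=1-C_\kappa^\l(\cR,\XR)/(\kappa\m(\cR))-C_\kappa^\l(\cR,\XR)/(\l\m(\XR))$. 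By Rayleigh monotonicity $C_\kappa^\l(\cR,\XR)\le C_\kappa(\cR,\XR)$ and $C_\kappa^\l(\cR,\XR)\le C^\l(\cR,\XR)$, so this difference is at least the curly bracket appearing in the first line of~(\ref{gap&cap}). Finally, the law of total variance
\[
\var_\m(V)=\m(\cR)\var_{\m_\cR}(V)+\m(\XR)\var_{\m_{\XR}}(V)+\m(\cR)\m(\XR)\bigl(\E_{\m_\cR}[V]-\E_{\m_{\XR}}[V]\bigr)^2
\]
gives $\var_\m(V)\ge\m(\cR)\m(\XR)\bigl(\E_{\m_\cR}[V]-\E_{\m_{\XR}}[V]\bigr)^2$; dividing by $\cD(V)\le C_\kappa^\l(\cR,\XR)$ and using $\m(\cR)\m(\XR)/C_\kappa^\l(\cR,\XR)=1/\phi_\kappa^\l(\cR,\XR)$ yields the lower bound, in the regime where the curly bracket is non-negative (the only case of interest).

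For the upper bound, fix $f$ with $\var_\m(f)\neq0$, set $m_\cR=\E_{\m_\cR}[f]$, $m_{\XR}=\E_{\m_{\XR}}[f]$, and (when $m_\cR\neq m_{\XR}$, the equality case being trivial) feed into the minimisation in~(\ref{capacity}) the test function $g=(f-m_{\XR})/(m_\cR-m_{\XR})$, for which $\E_{\m_\cR}[g]=1$ and $\E_{\m_{\XR}}[g]=0$. This produces $(m_\cR-m_{\XR})^2\,C_\kappa^\l(\cR,\XR)\le\cD(f)+\kappa\m(\cR)\var_{\m_\cR}(f)+\l\m(\XR)\var_{\m_{\XR}}(f)$. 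Plugging this into the law of total variance for $f$, using $\m(\cR)\m(\XR)/C_\kappa^\l(\cR,\XR)=1/\phi_\kappa^\l(\cR,\XR)$, then bounding $\m(\cR)\var_{\m_\cR}(f)\le\m(\cR)\cD_\cR(f)/\g_\cR$ and $\m(\XR)\var_{\m_{\XR}}(f)\le\m(\XR)\cD_{\XR}(f)/\g_{\XR}$, and finally using $\m(\cR)\cD_\cR(f)+\m(\XR)\cD_{\XR}(f)\le\cD(f)$ (the defect being the non-negative Dirichlet energy carried by the $\cR$--$\XR$ edges), one obtains $\var_\m(f)\le\bigl(1/\phi_\kappa^\l(\cR,\XR)+M\bigr)\cD(f)$ with $M=\max\bigl(\g_\cR^{-1}(1+\kappa/\phi_\kappa^\l(\cR,\XR)),\,\g_{\XR}^{-1}(1+\l/\phi_\kappa^\l(\cR,\XR))\bigr)$, which is exactly the second line of~(\ref{gap&cap}).

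I expect the only genuinely delicate points to be, first, the flux identities for $V_\kappa^\l$ --- i.e.\ correctly exploiting harmonicity in the extended network to express $\E_{\m_\cR}[V_\kappa^\l]$ and $\E_{\m_{\XR}}[V_\kappa^\l]$ through $C_\kappa^\l(\cR,\XR)$ --- and, second, the bookkeeping of which portions of $\cD(f)$ are consumed by the within-block Poincar\'e inequalities and which remain on the cutting edges; everything else is elementary once the two-sided variational principle for $C_\kappa^\l$ is available.
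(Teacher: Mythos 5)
Your proof is correct, and the upper bound is argued exactly as the paper does: law of total variance over $\{\cR,\XR\}$, the normalized test function $g=(f-m_{\XR})/(m_\cR-m_{\XR})$ in the variational definition of $C_\kappa^\l$, the block Poincar\'e inequalities $\var_{\m_\cR}(f)\le\cD_\cR(f)/\g_\cR$ and $\var_{\m_{\XR}}(f)\le\cD_{\XR}(f)/\g_{\XR}$, and finally $\m(\cR)\cD_\cR(f_{|\cR})+\m(\XR)\cD_{\XR}(f_{|\XR})\le\cD(f)$.

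For the lower bound, you use the same test function $V_\kappa^\l$ and the same total-variance decomposition, but you obtain the key estimate on $\E_{\m_\cR}[V_\kappa^\l]-\E_{\m_{\XR}}[V_\kappa^\l]$ differently, and arguably more cleanly, than the paper. You derive the exact flux identities $\kappa\m(\cR)(1-\E_{\m_\cR}[V_\kappa^\l])=C_\kappa^\l(\cR,\XR)=\l\m(\XR)\E_{\m_{\XR}}[V_\kappa^\l]$ directly from harmonicity of $V_\kappa^\l$ at each $x\in\cX$ in the extended network (plus the standard identification of the capacity with the total current off $\bar\cR$), and then apply Rayleigh monotonicity of $(\kappa,\l)$-capacities in $\kappa$ and $\l$ to replace $C_\kappa^\l$ by $C_\kappa$ and $C^\l$ in the two subtracted terms. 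The paper instead uses pointwise monotonicity of the equilibrium potential in $\l$ (to compare $V_\kappa^\l$ with $V_\kappa$ on $\cR$, and dually with $V^\l$ on $\XR$) together with Lemma~\ref{lem:mediaV_k}, $\m_\cR(V_\kappa)=1-C_\kappa(\cR,\XR)/(\kappa\m(\cR))$, which the paper itself establishes by a separate argument involving an auxiliary Markov chain on the extended state space and a limit $\tilde\kappa\to0,\infty$. Your route is self-contained, avoids the auxiliary chain entirely, and reproves (and generalizes to the two-parameter setting) the content of Lemma~\ref{lem:mediaV_k} in one line; what the paper's route buys is that Lemma~\ref{lem:mediaV_k} is also reused elsewhere in Section~\ref{klc}, so the paper has independent reasons to prove it. Both derivations of the lower bound hold only when the curly-bracket term is non-negative, which you correctly note is the only nontrivial case.

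One small point of care that you flag correctly but should make explicit if you write this out: the summation of harmonicity relations over $x\in\cX$ gives one balance identity, namely $\kappa\m(\cR)(1-\E_{\m_\cR}[V])=\l\m(\XR)\E_{\m_{\XR}}[V]$, and the identification of this common value with $C_\kappa^\l(\cR,\XR)$ is the separate (standard) fact that the capacity equals the total current out of the source set $\bar\cR$. Together these give the chain of equalities you assert.
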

\begin{proof} See Section~\ref{klc}.\end{proof}
\par\noindent{\em Remarks.}
\begin{itemize}
\item[i)]
Without loss of generality, we can assume $\mu(\cR)\leq\mu(\XR)$
so that, by (\ref{phi(A,B)}),
$\phi_\kappa^\lambda\leq 2 C_\kappa^\lambda(\cR,\XR)/\mu(\cR)$.
Then, as a consequence of the previous theorem and
of the monotonicity in $\kappa$ and $\lambda$
of $(\kappa,\lambda)$-capacities,
we get matching bounds on $1/\g$
in the regime
$\varepsilon_\cR^* + \varepsilon_{\XR}^* + \phi_\cR^*/\gamma_{\XR} \ll 1$.
One can indeed choose $\kappa$
such that $\phi_\cR^*\ll \kappa \ll \gamma_\cR$,
just as for Theorem \ref{th:phi&cap} (Remark i)), and $\lambda$ such that
$\phi_\cR^*, \phi_{\XR}^* \ll \lambda \ll \gamma_{\XR}$.
In addition and like previously,
all the relevant quantities can be estimated
by two-sided variational principles.
\item[ii)]
The lower bound is a generalization
of the classical isoperimetrical estimate
that is recovered for $\kappa = \lambda  = +\infty$.
\item[iii)]
The upper bound is a new Poincar\'e inequality.
This inequality, or an easy-to-derive version when one divides
the configuration space into more than two subsets,
echoes Poincar\'e inequalities given in~\cite{JSTV}.
We are not able to compare in full generality our result
with that of~\cite{JSTV} but we note that because of the presence
of some global parameter called $\gamma$ in~\cite{JSTV}
one gets generally in our metastable situation an extra {\em factor}
 $1/ \min(\gamma_\cR, \gamma_{\XR})$ by applying the results of~\cite{JSTV}.
\item[iv)]
The proof of this upper bound, when considering more than two subsets,
extends verbatim to obtain the following result.
\begin{lemma}\label{giallo}
	If $\mathcal{R}_1$, $\mathcal{R}_2$, \dots, $\mathcal{R}_m$
	form a partition of $\mathcal{X}$ for which each of the restricted processes $X_{\mathcal{R}_i}$
	is irreducible, if $\kappa_1$, $\kappa_2$, \dots, $\kappa_m$ are positive real numbers
	and if we write $\gamma_i$ for $\gamma_{\mathcal{R}_i}$ and
	$\phi(i,j) = C_{\kappa_i}^{\kappa_j}(\mathcal{R}_i, \mathcal{R}_j) / (\mu(\mathcal{R}_i) \mu(\mathcal{R}_j))$,
	then
	\begin{equation}
		\frac{1}{\gamma}
		\leq \left(
			\frac{1}{2} \sum_{i \neq j} \frac{1}{\phi(i, j)}
		\right)\left\{
			1 + \frac{
				\max_i \frac{1}{\gamma_i} \left\{
					1 + \sum_{j \neq i} \frac{\kappa_i}{\phi(i, j)}
				\right\}
			}{
				\frac{1}{2}\sum_{i \neq j} \frac{1}{\phi(i, j)}
			}
		\right\}
		\,.
	\end{equation}
\end{lemma}
\end{itemize}

\subsection{Soft measures, local thermalization, transition and mixing times}
\label{soft_therm_trans_mix}

We address now the difficulty raised by Lebowitz and Penrose.
Whatever the measure we choose to describe our metastable state,
restricted ensemble or quasi-stationary measure, it is associated
with some subset $\cR$ of the configuration space.
Then there is an ambiguity when one looks at property (b):
what is ``getting out'' of the metastable state?
One is tempted to say that it corresponds in our model to the exit
from $\cR$. But doing so we are very unlikely to modelize in any satisfactory
way property (c): we can expect that ``on the edge'',  when the system just exited $\cR$,
it has probabilities of the same order to ``proceed forward'' and thermalize in $\XR$
and to go ``backward'' and thermalize in $\cR$.
Thus we would like to define what would be a ``true escape'' from $\cR$.
Theorem~\ref{th:dinamica} suggests an answer in the regime $\phi_{\XR}^*T^*_{\XR}\ll 1$.
We could define the true escape as the first excursion of length $T_{\XR}^*$ inside $\XR$.
Since time randomization is almost always a good idea, we are led to
consider a random timer, which is independent of the dynamics and has exponential distribution
in order to keep the Markovianity of the process.
The timer starts when the dynamics exits $\cR$, but if it does not ring before returning to $\cR$,
the excursion to $\XR$ is ignored in the sense that it is not considered
a ``true escape'' from $\cR$. A ``true escape'' happens only when the timer rings during
one of the excursion outside $\cR$.
This will lead to an extension of the concept of quasi-stationary
distribution that interpolate between $\mu_{\mathcal{R}}^*$ and $\mu_{\mathcal{R}}$
and we will see (Theorem~\ref{mix} below) that the system will actually be close
to equilibrium the first time when the timer will ring during an excursion outside $\mathcal{R}$:
it will have truly escaped from metastability.

For any $A\subset \cX$ we call
\begin{equation}
\ell_A(t) = \int_0^t\1_A(X(s))ds
\end{equation}
the local time spent in $A$
up to time t and we denote by $\ell^{-1}_A$
the right-continuous inverse of $\ell_A$:
\begin{equation}
\ell^{-1}_A(t) = \inf\{s\geq 0 :\: \ell_A(s) > t\}.
\end{equation}
Recall that the process $X$ can be seen
as the process updated, according to its discrete version
with transition probability matrix $p$,
at each ring of a Poissonian clock with intensity~1.
Let us then call $\tau$ the first ringing time.
For $\sigma_\lambda$
an exponential time with mean $1/\lambda$
that is independent from $X$, we define for all $x$ and
$y$ in $\cR$
\begin{equation}
p_{\cR, \lambda}^*(x,y) = \P_x\left(X(\tau_\cR^+) = y, \ell_{\XR}(\tau_\cR^+)\leq \sigma_\lambda\right)
\end{equation}
with $\tau_\cR^+$ the return time in $\cR$ after the first clock ring,
i.e., $\tau_\cR^+ = \tau + \tau_\cR\circ\theta_\tau$ with $\theta$ the usual shift operator.
We also define, for all $x$ in $\cR$,
\begin{equation}
e_{\cR,\lambda} (x) = \P_x(\ell_{\XR}(\tau_\cR^+) > \sigma_\lambda) = 1 - \sum_{y\in\cR} p_{\cR,\lambda}^*(x,y)
\end{equation}
and for all $x$ and $y$ in $\cR$
\begin{equation}
p_{\cR,\lambda}(x, y) = \left\{\begin{array}{ll}
    p_{\cR,\lambda}^*(x,y) & \mbox{if $x\neq y$},\\
    p_{\cR,\lambda}^*(x,x) + e_{\cR,\lambda}(x) & \mbox{if $x =  y$}.
  \end{array}
\right.
\end{equation}
The Markov process $X_{\cR,\lambda}$ on $\cR$
with generator defined by
\begin{equation}
\cL_{\cR,\lambda}f(x) = \sum_{y\in \cR} p_{\cR,\lambda}(x, y)(f(y) - f(x))
\end{equation}
is reversible with respect to $\mu_\cR$ and has spectral gap
\begin{equation}
\gamma_{\cR,\lambda} = \min_{\var_{\mu_\cR}(f)\neq 0} \frac{\cD_{\cR,\lambda}(f)}{\var_{\mu_\cR}(f)}
\end{equation}
where
\begin{equation}
\cD_{\cR,\lambda} (f) = \frac{1}{2} \sum_{x,y} c_{\cR,\lambda}(x,y)(f(x) - f(y))^2
\end{equation}
with
\begin{equation}
c_{\cR,\lambda} (x,y) = \mu_\cR(x) p_{\cR,\lambda}(x, y) = p_{\cR,\lambda}(y,x) \mu_\cR(y).
\end{equation}
In addition we define
\begin{equation}
\mathcal T :=\ell_{\mathcal X\setminus \mathcal R}^{-1}(\sigma_\lambda)
\end{equation}
\begin{equation}
\tau_{\XR,\lambda} = \ell_\cR(\ell^{-1}_{\XR}(\sigma_\lambda)).
\end{equation}
We will refer to $\tau_{\XR, \lambda}$ as the {\em transition time},
since, for suitable choices of $\lambda$, this is the time spent
by the process in $\mathcal{R}$ before ``truly escaping'' from $\mathcal{R}$,
as seen by formula~\eqref{giggiola} in Theorem~\ref{mix} below.

\begin{remark}\label{referee}
While $\mathcal T$ is the global time such that the time spent in $\mathcal X\setminus \mathcal R$,
during possibly many excursions, is equal to $\sigma_\lambda$,
the time $\tau_{\mathcal X\setminus \mathcal R,\lambda}$
is the local time on $\mathcal R$ associated to $\mathcal T$.
On one hand, it may thus look natural to address the study toward the characterization
of the global time $\cT$.
On the other hand, the transition time $\tau_{\mathcal X\setminus \mathcal R,\lambda}$
is a natural generalization of the exit time $\tau_{\mathcal X\setminus \mathcal R}$,
given in such a way that when the time
$\tau_{\mathcal X\setminus \mathcal R,\l}$ is reached, not only the dynamics has exited $\mathcal R$
but it has also spent in $\mathcal X\setminus \mathcal R$ a time equal to $\sigma_\lambda$.
With a little effort, we will then derive for $\tau_{\mathcal X\setminus \mathcal R,\lambda}$
 similar results to those we have obtained for $\tau_{\mathcal X\setminus \mathcal R}$,
and in particular its asymptotic exponential law (see Theorem\ref{th:t7}).
At this point one may then think to derive information on $\cT$ by the identity
$$\mathcal T= \sigma_\lambda + \tau_{\mathcal X\setminus \mathcal R,\lambda}\,,$$
but since the random variables $\sigma_\lambda$ and  $\tau_{\mathcal X\setminus \mathcal R,\lambda}$
are not independent, this representation of $\cT$ is not immediately useful.
However, we will show  that for a suitable range of $\l$
the global time $\mathcal T$ and the local time $\tau_{\mathcal X\setminus \mathcal R,\lambda}$
are asymptotically of the same order, which is also the order of the relaxation time
(see Theorem \ref{mix} and remark below).
\end{remark}

We know by the Perron-Frobenius theorem
that the spectral radius of $p_{\cR,\lambda}^*$
is a simple positive eigenvalue that is smaller than or equal to 1 and has
left and right eigenvectors with positive coordinates.
We call it $1-\phi_{\cR,\lambda}^*$ and denote by $\mu_{\cR,\lambda}^*$
the unique associated left eigenvector that is also a probability measure
on $\cR$. We then have the following lemma.
\begin{lemma}\label{lem:softmeasure}
It holds
\begin{itemize}
\item[i)] $\phi_{\cR,\l}^*=\m_{\cR,\l}^*(e_{\cR,\l})$ ;
\item[ii)] $\P_{\m_{\cR,\l}^*}(\t_{\XR,\l}>t)=e^{-t\phi_{\cR,\l}^*}\,,\quad\forall t\geq 0$ ;
\item[iii)] $\displaystyle\lim_{t\to\infty}\P_x(X\circ \ell^{-1}_\cR(t)=y\,|\,\t_{\XR,\l}>t)=
\m_{\cR,\l}^*(y)\,,\quad\forall x,y\in\cR$.
\end{itemize}
\end{lemma}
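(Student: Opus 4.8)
The plan is to derive all three parts from a single structural fact, which I would prove first: the time-changed process $X\circ G_\cR$, killed at the (almost surely finite, since $X$ is irreducible on a finite set) instant $G_{\XR}(\sigma_\lambda)$ at which the cumulative sojourn of $X$ in $\XR$ first overshoots the independent clock $\sigma_\lambda$, is a continuous-time Markov chain on $\cR$ with sub-Markovian semigroup $P_t=e^{t(p_{\cR,\lambda}^*-I)}=e^{-t}\sum_{k\geq0}\frac{t^k}{k!}(p_{\cR,\lambda}^*)^k$, and its lifetime measured on the $\cR$-clock is precisely $\tau_{\XR,\lambda}=L_\cR(G_{\XR}(\sigma_\lambda))$. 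To establish this I would pass to the discrete skeleton of $X$ (the chain with kernel $p$ observed at the Poissonian rings, whose holding times are i.i.d.\ $\exp(1)$, of mean $1$, and independent of the skeleton) and check the bookkeeping that each step of the trace chain ``from $y\in\cR$ to the first return to $\cR$ after the next ring'' costs exactly one i.i.d.\ $\exp(1)$ amount of $\cR$-local-time, excursions into $\XR$ being instantaneous on the $\cR$-clock; consequently the number of trace steps completed by $\cR$-local-time $t$ is $\mathrm{Poisson}(t)$, independent of the skeleton. On such a step from $y$, the excursion (if any), of $\XR$-length $\ell=L_{\XR}(\tau_\cR^+)$, survives the clock with conditional probability $e^{-\lambda\ell}$, and --- this is the crucial point --- by the lack of memory of the exponential the residual $\sigma_\lambda$-budget is again $\exp(\lambda)$ at the next return to $\cR$; integrating, the probability of surviving one step and landing at $z$ is $\E_y[\1_{X(\tau_\cR^+)=z}\,e^{-\lambda L_{\XR}(\tau_\cR^+)}]=\P_y(X(\tau_\cR^+)=z,\,L_{\XR}(\tau_\cR^+)\leq\sigma_\lambda)=p_{\cR,\lambda}^*(y,z)$, and the probability of being killed at that step is $e_{\cR,\lambda}(y)$. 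This is exactly the chain with generator $p_{\cR,\lambda}^*-I$ and lifetime $\tau_{\XR,\lambda}$.

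Item (i) needs none of this: summing the eigen-relation $\mu_{\cR,\lambda}^*p_{\cR,\lambda}^*=(1-\phi_{\cR,\lambda}^*)\mu_{\cR,\lambda}^*$ over $\cR$ and using $\sum_{z\in\cR}p_{\cR,\lambda}^*(y,z)=1-e_{\cR,\lambda}(y)$ together with $\mu_{\cR,\lambda}^*(\cR)=1$ gives $1-\mu_{\cR,\lambda}^*(e_{\cR,\lambda})=1-\phi_{\cR,\lambda}^*$. For (ii), the eigen-relation propagates through the series defining $P_t$, so $\mu_{\cR,\lambda}^*P_t=e^{-t}\sum_{k\geq0}\frac{t^k}{k!}(1-\phi_{\cR,\lambda}^*)^k\mu_{\cR,\lambda}^*=e^{-t\phi_{\cR,\lambda}^*}\mu_{\cR,\lambda}^*$; since $\P_{\mu_{\cR,\lambda}^*}(\tau_{\XR,\lambda}>t)$ is the total mass of $\mu_{\cR,\lambda}^*P_t$ (the probability that the killed chain is still alive at $\cR$-local-time $t$), it equals $e^{-t\phi_{\cR,\lambda}^*}$.

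For (iii), on $\{\tau_{\XR,\lambda}>t\}$ the position $X\circ G_\cR(t)$ is that of the killed chain, so $\P_x(X\circ G_\cR(t)=y\mid\tau_{\XR,\lambda}>t)=(P_t)_{xy}/\sum_{z\in\cR}(P_t)_{xz}$. The reversibility of $X_{\cR,\lambda}$ with respect to $\mu_\cR$ recorded above makes $p_{\cR,\lambda}^*$ self-adjoint on $\ell^2(\mu_\cR)$, hence with real eigenvalues, and by Perron--Frobenius its top eigenvalue $1-\phi_{\cR,\lambda}^*$ is simple and strictly larger than all the others; therefore $e^{t\phi_{\cR,\lambda}^*}P_t$ converges as $t\to\infty$ to the rank-one spectral projector $\Pi$, with $(\Pi)_{xy}=v(x)v(y)\mu_\cR(y)$ for $v>0$ the $\ell^2(\mu_\cR)$-normalised top eigenvector, and the conditional law converges to $y\mapsto v(y)\mu_\cR(y)/\sum_{z\in\cR}v(z)\mu_\cR(z)$. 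Finally, self-adjointness turns the left eigen-relation for $\mu_{\cR,\lambda}^*$ into the right eigen-relation $p_{\cR,\lambda}^*(\mu_{\cR,\lambda}^*/\mu_\cR)=(1-\phi_{\cR,\lambda}^*)(\mu_{\cR,\lambda}^*/\mu_\cR)$, so $v\propto\mu_{\cR,\lambda}^*/\mu_\cR$, whence $v(y)\mu_\cR(y)\propto\mu_{\cR,\lambda}^*(y)$ and the limit is $\mu_{\cR,\lambda}^*(y)$.

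The main obstacle is the first paragraph: turning the informal ``time-change plus killing'' picture into the rigorous statement that $X\circ G_\cR$ killed at $G_{\XR}(\sigma_\lambda)$ is the Markov chain with generator $p_{\cR,\lambda}^*-I$. The delicate points are that the successive Poissonian rings, the excursion lengths, and the \emph{single} clock $\sigma_\lambda$ interact only through the memoryless refreshing of the residual $\sigma_\lambda$-budget at each surviving return to $\cR$ (so that the regenerative structure is genuinely Markov); the identification of the step counter with a rate-$1$ Poisson process run on the $\cR$-clock (excursions costing no $\cR$-time, self-loops of $p$ counting as steps); and the verification that $L_\cR(G_{\XR}(\sigma_\lambda))$ is, up to null sets, exactly the lifetime of the killed chain. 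Once these are secured, the rest is the short computation above. I would also remark that letting $\lambda\to\infty$ (so that $\sigma_\lambda\to0$, $p_{\cR,\lambda}^*\to p_\cR^*$, $e_{\cR,\lambda}\to e_\cR$, $\tau_{\XR,\lambda}\to\tau_{\XR}$) recovers the corresponding facts for the quasi-stationary measure announced after~(\ref{Yaglom}).
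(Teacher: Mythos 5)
Your proposal follows the same underlying route as the paper --- everything hinges on identifying $X\circ G_\cR$ killed at $G_{\XR}(\sigma_\lambda)$ with the sub-Markov chain whose generator is $\lL=p_{\cR,\lambda}^*-I$, which is precisely the content of the paper's identity~(\ref{media&semigrupppo}) --- but you handle (i) and (iii) by somewhat different, and arguably cleaner, arguments. For (i), the paper writes $\P_{\m_{\cR,\l}^*}(\t_{\XR,\l}\leq t)$ as a series over the number of $\cR$-clock rings, while you simply sum the left-eigenvector relation $\mu_{\cR,\lambda}^*p_{\cR,\lambda}^*=(1-\phi_{\cR,\lambda}^*)\mu_{\cR,\lambda}^*$ over $\cR$ and use $\sum_z p_{\cR,\lambda}^*(y,z)=1-e_{\cR,\lambda}(y)$; this is a one-line algebraic identity that avoids probability altogether and also dispenses with the paper's separate treatment of $\lambda=0$. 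For (iii), the paper projects $\delta_x$ onto the top eigenspace and rules out $c_x\leq 0$ by contradiction with the exponential law, whereas you invoke self-adjointness of $p_{\cR,\lambda}^*$ on $\ell^2(\mu_\cR)$ (available from reversibility) to write the spectral projector explicitly as $(\Pi)_{xy}=v(x)v(y)\mu_\cR(y)$ with $v\propto h_{\cR,\lambda}^*$, so positivity of the leading coefficient is automatic from $v>0$; this trades the paper's elementary contradiction for a more structural argument that exploits an assumption (reversibility) the paper has in any case. Part (ii) is essentially identical in the two texts, both reducing to $\mu_{\cR,\lambda}^*e^{t\lL}=e^{-t\phi_{\cR,\lambda}^*}\mu_{\cR,\lambda}^*$. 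The ``main obstacle'' you flag --- rigorously passing from the verbal time-change-and-killing description to the semigroup $e^{t\lL}$ and the lifetime $\tau_{\XR,\lambda}$ --- is one the paper also leaves implicit (it states~(\ref{media&semigrupppo}) without derivation), so your careful accounting via the Poissonian skeleton, the memorylessness of $\sigma_\lambda$ at each return, and the Poisson step-counter on the $\cR$-clock is not redundant but rather fills a gap both texts would need to close for a fully detailed proof. I find no errors in your reasoning.
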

\begin{proof}
See Section~\ref{soft_p_1}.
\end{proof}
We say that $\mu_{\cR,\lambda}^*$
is a quasi-stationary measure associated with a soft barrier,
or a soft quasi-stationary measure,
or, more simply,  a  {\em soft measure}.
Indeed, $\mu_{\cR,\lambda}^*$ is the limiting distribution
of the process conditioned to survival
when it is killed at rate $\lambda$ outside~$\cR$.
So, the hardest quasi-stationary measure associated with $\cR$,
corresponding to $\lambda = +\infty$,
is the quasi-stationary measure $\mu_\cR^*$,
while the softest measure, corresponding to $\lambda = 0 $,
is the restricted ensemble $\mu_\cR$ ($\phi_{\cR,0}^* = 0$
and $\mu_{\cR,0}^*$ is the equilibrium measure associated with
$p_{\cR, 0}^* = p_{\cR, 0}$, which is reversible with respect to $\mu_{\cR}$).
More precisely we have the following.
\begin{lemma}\label{interpolation}
The function $\l\in[0,+\infty]\mapsto \m_{\cR,\l}^*\in\ell^2(\m_\cR^*)$
is a continuous interpolation between the restricted ensemble $\m_\cR$
and the quasi-stationary distribution $\m_\cR^*$.
In particular, for any $\l_0\in[0,+\infty]$ and $y\in\cR$, we have
\begin{equation}
\lim_{\l\to\l_0}\m_{\cR,\l}^*(y)=\m_{\cR,\l_0}^*(y)
\end{equation}
and for all $x\in\cR$ it holds the  limit commutation property
\begin{equation}\label{commutativity}
\lim_{\l\to\l_0}\lim_{t\to\infty} \P_x(X\circ \ell^{-1}_\cR(t)=y\,|\,\t_{\XR,\l}>t)
=
\lim_{t\to\infty}\lim_{\l\to\l_0}\P_x(X\circ \ell^{-1}_\cR(t)=y\,|\,\t_{\XR,\l}>t)\,.
\end{equation}
\end{lemma}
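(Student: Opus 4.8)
The plan is to reduce the whole lemma to a single continuity statement in $\l$ for the substochastic kernels $p_{\cR,\l}^*$ on $\cR$, and then propagate it to the Perron--Frobenius left eigenvector $\m_{\cR,\l}^*$ and, via Lemma~\ref{lem:softmeasure}, to the conditioned laws appearing in~(\ref{commutativity}). The starting point is the path representation obtained by conditioning on the trajectory of $X$ and integrating out the independent clock: since $\P(\s_\l\geq\ell)=e^{-\l\ell}$, for all $x,y\in\cR$ and $\l\in(0,+\infty)$
\be
p_{\cR,\l}^*(x,y)=\E_x\!\left[\1_{\{X(\t_\cR^+)=y\}}\,e^{-\l L_{\XR}(\t_\cR^+)}\right].
\ee
The integrand is bounded by $1$ and $L_{\XR}(\t_\cR^+)<+\infty$ almost surely (the finite irreducible chain $X$ returns to $\cR$ in finite time), so dominated convergence makes $\l\mapsto p_{\cR,\l}^*(x,y)$ continuous on $(0,+\infty)$ and pins down its boundary values: as $\l\downarrow0$ the exponential tends to $1$, giving $p_{\cR,\l}^*(x,y)\to\P_x(X(\t_\cR^+)=y)=p_{\cR,0}(x,y)$, while as $\l\uparrow+\infty$ it tends to $\1_{\{L_{\XR}(\t_\cR^+)=0\}}$ (the event that the first jump stays in $\cR$), giving $p_{\cR,\l}^*(x,y)\to p(x,y)=p_\cR^*(x,y)$. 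Hence $\l\mapsto p_{\cR,\l}^*$ is continuous on all of $[0,+\infty]$ and interpolates between the stochastic trace kernel $p_{\cR,0}$ and the hard substochastic kernel $p_\cR^*$.

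Next I would transfer this to the eigenvector. For each $\l\in[0,+\infty]$ the kernel $p_{\cR,\l}^*$ is irreducible --- for $\l<+\infty$ because it has the same support as the trace chain $p_{\cR,0}$ of the irreducible chain $X$ on $\cR$, and for $\l=+\infty$ because $X_\cR$ is irreducible --- so by the Perron--Frobenius theorem (as already recorded in the excerpt) its spectral radius $1-\phi_{\cR,\l}^*$ is a simple eigenvalue with a strictly positive left eigenvector. A simple eigenvalue and its suitably normalized eigenprojection depend continuously on the matrix entries, so $\l\mapsto(\phi_{\cR,\l}^*,\m_{\cR,\l}^*)$ is continuous on $[0,+\infty]$, which is the asserted point-wise continuity $\lim_{\l\to\l_0}\m_{\cR,\l}^*(y)=\m_{\cR,\l_0}^*(y)$. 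At the endpoints: $p_{\cR,0}$ is stochastic and reversible with respect to $\m_\cR$, hence its left Perron eigenvector for eigenvalue $1$ is $\m_\cR$, so $\m_{\cR,0}^*=\m_\cR$ and $\phi_{\cR,0}^*=0$; and $\m_{\cR,+\infty}^*=\m_\cR^*$ by the defining property of the quasi-stationary measure of $p_\cR^*$. Together with the continuity this is exactly the claimed continuous interpolation between $\m_\cR$ and $\m_\cR^*$.

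For the commutative limit~(\ref{commutativity}) I would show that both iterated limits equal $\m_{\cR,\l_0}^*(y)$. On the left, Lemma~\ref{lem:softmeasure}(iii) gives, for each fixed $\l$, the inner limit $\lim_{t\to\infty}\P_x(X\circ G_\cR(t)=y\mid\t_{\XR,\l}>t)=\m_{\cR,\l}^*(y)$, and then the outer limit $\l\to\l_0$ equals $\m_{\cR,\l_0}^*(y)$ by the continuity just established. On the right, for fixed $t>0$ both $\P_x(X\circ G_\cR(t)=y,\ \t_{\XR,\l}>t)$ and $\P_x(\t_{\XR,\l}>t)$ are continuous in $\l\in[0,+\infty]$ --- again by dominated convergence, since these are expectations of bounded functionals depending on the clock only through $\s_\l$, whose law converges, with the limiting events at $\l\in\{0,+\infty\}$ read off from $\s_0=+\infty$ and $\s_{+\infty}=0$ --- and the denominator is bounded away from $0$ for $\l$ near $\l_0$ because with positive probability $X$ simply sits in $\cR$ past any fixed local-time level. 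Hence the conditioned law is continuous in $\l$, the inner limit $\l\to\l_0$ is $\P_x(X\circ G_\cR(t)=y\mid\t_{\XR,\l_0}>t)$, and letting $t\to\infty$ and using Lemma~\ref{lem:softmeasure}(iii) once more gives $\m_{\cR,\l_0}^*(y)$. The two orders agree, which proves~(\ref{commutativity}).

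The only genuinely delicate points sit at the two boundary values $\l\in\{0,+\infty\}$, where the nature of the kernel changes: one must verify that the relevant local-time functionals are almost surely finite (so that the exponential tends to $1$ as $\l\downarrow0$ and recovers the trace kernel), that $e^{-\l L_{\XR}(\t_\cR^+)}\to\1_{\{L_{\XR}(\t_\cR^+)=0\}}$ correctly recovers the hard kernel $p_\cR^*$ as $\l\uparrow+\infty$, and that the Perron eigenvalue remains simple across the whole range so the eigenvector truly converges --- all of which follow from the standing irreducibility of $X_\cR$ and $X_{\XR}$. Everything else is routine dominated-convergence bookkeeping on the path representation together with the classical perturbation theory of simple eigenvalues, and once both iterated limits are pinned to the common value $\m_{\cR,\l_0}^*(y)$ the commutation is automatic.
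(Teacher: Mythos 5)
Your proof is correct, and it diverges from the paper's at the one step where the paper actually does work. You first establish continuity of $\l\mapsto p_{\cR,\l}^*(x,y)$ on all of $[0,+\infty]$ through the Laplace-transform path representation $p_{\cR,\l}^*(x,y)=\E_x[\1_{\{X(\t_\cR^+)=y\}}e^{-\l L_{\XR}(\t_\cR^+)}]$ and dominated convergence, identifying the boundary kernels $p_{\cR,0}$ and $p_\cR^*$ explicitly; the paper takes this entry-wise continuity for granted. Then, to pass from the kernels to $\m_{\cR,\l}^*$, you invoke the standard finite-dimensional perturbation fact that a simple eigenvalue and its normalized eigenvector depend continuously on the matrix (simplicity being guaranteed at every $\l$ by Perron--Frobenius and the irreducibility you check), whereas the paper proves this fact by hand for the case at issue: it factors the characteristic polynomial of $\lL$ as $\chi_\l(y)=(y-\phi_{\cR,\l}^*)a(y)$, uses the Euclidean division and the resulting B\'ezout identity to exhibit $\frac{1}{r(0)}\d_x a(\lL)$ as the eigenprojection, and so writes $\m_{\cR,\l}^*=\d_x a(\lL)/\sum_y \d_x a(\lL)\d_y$ as an explicit composition of continuous functions of $\l$. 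The trade-off is that the paper's argument is self-contained (no appeal to spectral perturbation theory) but leaves implicit exactly the points you spell out: continuity of the entries at the endpoints $\l\in\{0,+\infty\}$, the identification $\m_{\cR,0}^*=\m_\cR$ and $\m_{\cR,+\infty}^*=\m_\cR^*$, and the verification of both iterated limits in~(\ref{commutativity}) via Lemma~\ref{lem:softmeasure}(iii), the fixed-$t$ continuity in $\l$ of the conditioned laws, and the uniform positive lower bound on the conditioning probability. Your treatment of that last point is the only place where detail is thinner than it could be (the clean way is to write $\P_x(\t_{\XR,\l}>t)=\E_x[e^{-\l L_{\XR}(G_\cR(t))}]$ up to null events of the independent continuous clock and apply dominated convergence again), but the mechanism you indicate is the right one, so this is a matter of bookkeeping rather than a gap.
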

\begin{proof} See Section~\ref{soft_p_2}.\end{proof}
Analogously to what was done in the case $\lambda = +\infty$
we set $\varepsilon_{\cR,\lambda}^* = \phi_{\cR,\lambda}^*/ \gamma_{\cR,\lambda}$,
$h_{\cR,\lambda}^* = \mu_{\cR,\lambda}^*/\mu_{\cR}$ and we call $\gamma_{\cR,\lambda}^*$
the gap between the largest and the second eigenvalue of $p_{\cR,\lambda}^*$
(since $p_{\cR,\lambda}^*$ is self-adjoint with respect to $\langle\cdot,\cdot\rangle_\cR$
it has only real eigenvalues). We also define $\phi_{\cR,\lambda} = \mu_\cR(e_{\cR,\lambda})$.
\begin{proposition}\label{prop:contin-gap}
The parameters $\g_{\cR,\l}$, $\phi_{\cR,\l}^*$, $\e_{\cR,\l}^*$ and $\phi_{\cR,\l}$
depend continuously on $\l$. In addition, when $\l$ decreases to $0$, so do
$\phi_{\cR,\l}^*$, $\e_{\cR,\l}^*$ and $\phi_{\cR,\l}$, while $\g_{\cR,\l}$ increases.
\end{proposition}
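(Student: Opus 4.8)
The plan is to reduce every assertion to one elementary observation: once the independent exponential clock $\s_\l$ is integrated out, the sub-Markovian kernel $p_{\cR,\l}^*$ becomes an entrywise Laplace transform in $\l$. First I would condition on the trajectory of $X$ and use $\P(\s_\l\geq t)=e^{-\l t}$ to record that, for all $x,y\in\cR$,
\be
p_{\cR,\l}^*(x,y)=\E_x\!\left[\1_{\{X(\t_\cR^+)=y\}}\,e^{-\l L_{\XR}(\t_\cR^+)}\right],\qquad
e_{\cR,\l}(x)=1-\E_x\!\left[e^{-\l L_{\XR}(\t_\cR^+)}\right],
\ee
the second identity coming from the first by summing over $y\in\cR$. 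Since $\cX$ is finite and $X$ irreducible, $\t_\cR^+<\infty$ almost surely and $L_{\XR}(\t_\cR^+)\leq\t_\cR^+$ is an a.s.\ finite random variable, so each map $\l\mapsto p_{\cR,\l}^*(x,y)$ is the Laplace transform of a finite nonnegative measure on $[0,\infty)$: real-analytic on $(0,\infty)$, continuous and non-increasing on $[0,+\infty]$, with $e_{\cR,\l}(x)$ non-decreasing in $\l$ and $e_{\cR,\l}(x)\downarrow 0$ as $\l\downarrow 0$ by monotone convergence. From this the claim for $\phi_{\cR,\l}=\sum_{x\in\cR}\m_\cR(x)e_{\cR,\l}(x)$ is immediate: a finite nonnegative combination of the $e_{\cR,\l}(x)$, it is continuous, non-decreasing, and tends to $0$ as $\l\downarrow 0$.

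Next I would treat $\g_{\cR,\l}$. The diagonal of $p_{\cR,\l}$ does not enter the Dirichlet form, so $\cD_{\cR,\l}(f)=\tfrac12\sum_{x\neq y}\m_\cR(x)p_{\cR,\l}^*(x,y)[f(x)-f(y)]^2$, which for each fixed $f$ is continuous and non-increasing in $\l$ by the representation above, and jointly continuous in $(\l,f)$ on $[0,+\infty]\times S$, where $S$ is the compact set of functions modulo constants with $\var_{\m_\cR}(f)=1$. Hence $\g_{\cR,\l}=\min_{f\in S}\cD_{\cR,\l}(f)$ is continuous in $\l$ (minimum of a jointly continuous function over a fixed compact set) and, being a pointwise infimum of non-increasing functions, is itself non-increasing in $\l$, i.e.\ it increases as $\l$ decreases. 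I would also note $\g_{\cR,\l}>0$ for every $\l$: the representation gives $p_{\cR,\l}^*(x,y)\geq p(x,y)$ for distinct $x,y\in\cR$ (restrict to $\{X(\t)=y\}$, on which $L_{\XR}(\t_\cR^+)=0$), so $X_{\cR,\l}$ inherits irreducibility from $X_\cR$.

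For $\phi_{\cR,\l}^*$ I would use that $1-\phi_{\cR,\l}^*$ is the Perron--Frobenius eigenvalue of the nonnegative (irreducible) matrix $p_{\cR,\l}^*$; being simple, it depends continuously (indeed analytically) on the entries of $p_{\cR,\l}^*$, hence on $\l$, and since the spectral radius of a nonnegative matrix is non-decreasing in each entry, $1-\phi_{\cR,\l}^*$ is non-increasing and $\phi_{\cR,\l}^*$ non-decreasing in $\l$. At $\l=0$ one has $\s_0=+\infty$ a.s., so $p_{\cR,0}^*=p_{\cR,0}$ is stochastic, has spectral radius $1$, and $\phi_{\cR,0}^*=0$; hence $\phi_{\cR,\l}^*\downarrow 0$ as $\l\downarrow 0$. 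Finally $\e_{\cR,\l}^*=\phi_{\cR,\l}^*/\g_{\cR,\l}$ is the quotient of a nonnegative, non-decreasing, continuous function by a strictly positive, non-increasing, continuous one, so it is continuous and non-decreasing in $\l$; and as $\l\downarrow 0$ the numerator goes to $0$ while the denominator stays bounded below by $\g_{\cR,0}>0$, whence $\e_{\cR,\l}^*\downarrow 0$.

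The monotonicity bookkeeping and the reduction to the Laplace-transform picture are routine. The step I expect to require the most care is the continuity of $\g_{\cR,\l}$ and of the simple Perron eigenvalue $\phi_{\cR,\l}^*$ up to the endpoint $\l=+\infty$, where $p_{\cR,\l}^*\to p_\cR^*$ only entrywise: this rests on the ``minimum of a jointly continuous function over a compact set'' argument together with standard perturbation theory for simple eigenvalues, and presents no genuine difficulty once the entrywise continuity of $p_{\cR,\l}^*$ on $[0,+\infty]$ is established.
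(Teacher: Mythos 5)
Your argument is correct, and its skeleton is the same as the paper's: entrywise monotonicity and continuity of the kernel in $\l$, transferred to $\phi_{\cR,\l}^*$, $\g_{\cR,\l}$, $\e_{\cR,\l}^*$ and $\phi_{\cR,\l}$ through spectral/variational characterizations. The implementation, however, differs at each step. Where the paper simply asserts that $p_{\cR,\l}^*(x,y)$ and $c_{\cR,\l}(x,y)$ grow as $\l$ decreases and gets continuity of $\phi_{\cR,\l}^*$ and $\g_{\cR,\l}$ from the continuity of eigenvalues as roots of the characteristic polynomial of $\lL$, you derive the entrywise behaviour from the explicit Laplace-transform identity $p_{\cR,\l}^*(x,y)=\E_x[\1_{\{X(\t_\cR^+)=y\}}e^{-\l L_{\XR}(\t_\cR^+)}]$, which gives at once monotonicity, continuity on all of $[0,+\infty]$ (with the correct endpoint values $p_{\cR,0}^*=p_{\cR,0}$ and $p_{\cR,\infty}^*=p_\cR^*$), and the limits $e_{\cR,\l}(x)\downarrow 0$; as a by-product you obtain $\phi_{\cR,\l}^*\downarrow 0$, $\phi_{\cR,\l}\downarrow 0$ and $\e_{\cR,\l}^*\downarrow 0$ as $\l\downarrow 0$, slightly more than the bare monotonicity stated in the proposition. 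For the monotonicity of $\phi_{\cR,\l}^*$ you invoke entrywise monotonicity of the spectral radius of nonnegative matrices, while the paper uses the Rayleigh-quotient formula restricted to $f>0$ (the positivity constraint there is exactly what makes the quotient monotone in the entries, so the two arguments are essentially equivalent); for the continuity of $\g_{\cR,\l}$ you use joint continuity plus compactness of the variance-one sphere instead of root continuity of the characteristic polynomial, which has the advantage of handling the endpoint $\l=+\infty$ explicitly. Your remark that $p_{\cR,\l}^*(x,y)\geq p(x,y)$ for distinct $x,y\in\cR$, hence irreducibility of $X_{\cR,\l}$ and $\g_{\cR,\l}>0$, makes explicit something the paper leaves implicit.

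One small slip, with no consequence for the conclusion: at the very end you say the denominator of $\e_{\cR,\l}^*$ ``stays bounded below by $\g_{\cR,0}>0$'' as $\l\downarrow 0$. By the monotonicity you just established, $\g_{\cR,\l}\leq\g_{\cR,0}$ for $\l\geq 0$, so the correct uniform lower bound is $\g_{\cR,\l}\geq\g_{\cR,\infty}=\g_\cR>0$; alternatively, just use that $\g_{\cR,\l}\to\g_{\cR,0}>0$ by the continuity you proved. Either fix makes the final limit $\e_{\cR,\l}^*\to 0$ immediate.
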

\begin{proof} See Section~\ref{soft_p_3}.\end{proof}
The proofs of Sections~\ref{quasi_restr_p} carry over this more general
setup, and we get, by defining the analogous $T^*_{\delta, \mathcal{R}, \lambda}$
and $\alpha_{\mathcal{R}, \lambda}$ (while $\zeta_R$, which is associated
with $\mu_{\mathcal{R}}$ rather than $\mu^*_{\mathcal{R}}$, has no ``$\lambda$-extension''),
the following theorem.
\begin{theorem}[Mixing towards soft measures]\label{th:generalization}
For all $\l\geq0$, $\phi_{\cR,\l}^*\leq\phi_{\cR}^*$, $\g_{\cR,\l}\geq\g_\cR$
and $\e_{\cR,\l}^*\leq\e_{\cR}^*$, Proposition \ref{prop:varH}, Proposition \ref{prop:gaps},
Theorem \ref{th:dinamica} and Lemma \ref{lem:zeta} hold with an extra index $\l$
and writing $X\circ \ell^{-1}_\cR$ instead of $X$.
\end{theorem}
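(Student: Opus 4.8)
The plan is to dispatch the three scalar inequalities first, and then explain why the four statements of Section~\ref{quasi_restr_p} transfer mechanically to the soft setting.

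For the inequalities I would compare the soft kernel $p_{\cR,\l}^*$ with the hard one $p_\cR^*$ entrywise. Realizing all the killing clocks on one probability space by writing $\s_\l=E/\l$ with $E$ a single rate-one exponential variable, for fixed $x,y\in\cR$ the event $\{X(\t_\cR^+)=y,\,L_{\XR}(\t_\cR^+)\leq E/\l\}$ is nonincreasing in $\l$; since $\{L_{\XR}(\t_\cR^+)=0\}=\{X(\t)\in\cR\}$ up to a null event, letting $\l\uparrow+\infty$ gives $p_{\cR,\l}^*(x,y)\downarrow p(x,y)=p_\cR^*(x,y)$, so $p_{\cR,\l}^*\geq p_\cR^*$ entrywise for every finite $\l$. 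Perron--Frobenius monotonicity of the spectral radius then gives $1-\phi_{\cR,\l}^*=\r(p_{\cR,\l}^*)\geq\r(p_\cR^*)=1-\phi_\cR^*$, i.e. $\phi_{\cR,\l}^*\leq\phi_\cR^*$. The same domination on off-diagonal entries reads $c_{\cR,\l}(x,y)=\m_\cR(x)p_{\cR,\l}^*(x,y)\geq\m_\cR(x)p(x,y)=c_\cR(x,y)$ for $x\neq y$; since the diagonal does not enter the Dirichlet forms and both are computed against $\m_\cR$, I get $\cD_{\cR,\l}(f)\geq\cD_\cR(f)$ for all $f$, hence $\g_{\cR,\l}\geq\g_\cR$, and dividing, $\e_{\cR,\l}^*=\phi_{\cR,\l}^*/\g_{\cR,\l}\leq\phi_\cR^*/\g_\cR=\e_\cR^*$. (These three facts can alternatively be read off from the monotonicity in $\l$ asserted in Proposition~\ref{prop:contin-gap} together with the continuity at $\l=+\infty$ of Lemma~\ref{interpolation}.)

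For the transfer, the key observation is that the proofs in Section~\ref{quasi_restr_p} of Proposition~\ref{prop:varH}, Proposition~\ref{prop:gaps}, Theorem~\ref{th:dinamica} and Lemma~\ref{lem:zeta} use only the following abstract package attached to $\cR$: a sub-Markov kernel on $\cR$ that is self-adjoint on $\ell^2(\m_\cR)$ and splits as $\cL_\cR-\mathrm{diag}(e_\cR)$ with $\cL_\cR$ the generator of a $\m_\cR$-reversible reflected chain of gap $\g_\cR$; its Perron--Frobenius data $\phi_\cR^*$, $h_\cR^*=\m_\cR^*/\m_\cR$, $\g_\cR^*$, subject to $\m_\cR^*(e_\cR)=\phi_\cR^*$ and $\e_\cR^*=\phi_\cR^*/\g_\cR$; and the identification of the law of $X(t)$ killed at $\t_{\XR}$ with the associated sub-Markov semigroup, from which the Yaglom limit~(\ref{Yaglom}) follows. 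By the construction of $X_{\cR,\l}$ and by Lemma~\ref{lem:softmeasure}, the soft kernel $p_{\cR,\l}^*$ supplies the very same package with $\cL_{\cR,\l}-\mathrm{diag}(e_{\cR,\l})$, $\phi_{\cR,\l}^*$, $h_{\cR,\l}^*=\m_{\cR,\l}^*/\m_\cR$, $\g_{\cR,\l}^*$, $\g_{\cR,\l}$, $\z_{\cR,\l}^*$ and $\t_{\XR,\l}$ replacing their hard counterparts, the only change being that the process watched in the Yaglom limit is the time-changed process $X\circ G_\cR$ rather than $X$. I would then re-run the four proofs verbatim under this substitution; the only point deserving a line is that in Lemma~\ref{lem:zeta} the hypothesis $p(x,x)>0$ still gives $p_{\cR,\l}(x,x)\geq p_{\cR,\l}^*(x,x)\geq p(x,x)>0$, so it again controls $\D_{\cR,\l}$ and $D_{\cR,\l}$.

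The hard part is not the algebra of those four proofs — which is genuinely insensitive to the substitution — but certifying the semigroup/Yaglom structure for the $\cR$-clock process: that $\P_x(X\circ G_\cR(t)=y,\,\t_{\XR,\l}>t)$ is, up to the usual $\m_\cR$ weighting, the $(x,y)$ entry of $\exp(t(\cL_{\cR,\l}-\mathrm{diag}(e_{\cR,\l})))$, that this operator is self-adjoint on $\ell^2(\m_\cR)$ with leading eigenpair $(e^{-t\phi_{\cR,\l}^*},h_{\cR,\l}^*)$ and next spectral gap governed by $\g_{\cR,\l}^*$, and that $\phi_{\cR,\l}^*=\m_{\cR,\l}^*(e_{\cR,\l})$. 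All of this is precisely what is established in Section~\ref{soft_p}, notably in the proof of Lemma~\ref{lem:softmeasure}, so once those facts are granted the transfer is automatic.
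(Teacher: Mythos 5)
Your proposal is correct and follows essentially the route the paper intends: the paper gives no separate proof of Theorem~\ref{th:generalization} beyond the remark that the proofs of Section~\ref{quasi_restr_p} carry over once Lemma~\ref{lem:softmeasure} (the semigroup/Yaglom structure for $X\circ G_\cR$ via $e^{t\lL}$) and the monotonicity mechanism of Proposition~\ref{prop:contin-gap} are in place, and your ``abstract package'' transfer plus the entrywise domination $p_{\cR,\l}^*\geq p_\cR^*$ with the conductance comparison is exactly that mechanism, specialized to comparing finite $\l$ with $\l=+\infty$. The only mild difference is cosmetic: you invoke Perron--Frobenius monotonicity of the spectral radius where the paper uses the variational characterizations (\ref{phi-lambda}) and (\ref{gap-lambda}), which is an equivalent argument.
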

\begin{remark}
By continuity and monotonicity, the hypotheses
$\e_{\cR,\l}^*<1$ and $\e_{\cR,\l}^*<1/3$ are always
satisfied for $\l$ small enough.
\end{remark}
We are now ready to deal with local thermalization:
we will identify a ``short'' time scale on which,
for any given starting point,
the system will relax towards a mixture
of ``local equilibria'' that are our quasi-stationary measures
with soften barriers.

For a given $\kappa\geq 0$, let $\sigma_\kappa$ be
an exponential time with mean $1/\kappa$
which is independent from $X$ and from $\s_\l$.
We think to $\s_k$ as to the random time which enters in  the construction of soft measures over $\XR$,
in the same way the random time $\s_\l$ entered in the construction of soft measure over $\cR$.
We define inductively, for $\kappa, \lambda \geq 0$, the stopping times $\t_i$ for $i\geq 0$:
\begin{eqnarray}
\t_0 &=& 0,\\
\t_1 &=& \ell^{-1}_\cR(\sigma_\kappa)\wedge \ell^{-1}_{\XR}(\sigma_\lambda),\\
\t_{i+1} &=& \t_i + \t_1\circ\theta_{\tau_i}
\end{eqnarray}
Then for $\delta \in (0,1)$ we call $i_0$ the smallest $i\geq 1$ such that one of the two following conditions
holds,
\begin{eqnarray}
i) && X(\t_i)\in\cR \mbox{ and } \ell_\cR(\t_i) - \ell_\cR(\t_{i-1}) > T_{\delta,\cR,\lambda}^*,\label{b1}\\
ii)&& X(\t_i)\not\in\cR \mbox{ and } \ell_{\XR}(\t_i) - \ell_{\XR}(\t_{i-1}) > T_{\delta,\XR,\k}^*,\label{b2}
\end{eqnarray}
and we set $\t_\delta = \t_{i_0}$.
\begin{theorem}[Local thermalization]\label{th:t6}
For any $\d\in(0,1)$ and any probability measure $\nu$
on $\cX$, if $\e_{\cR,\l}^*<1/3$ and $\e_{\XR,\kappa}^*<1/3$, then
\begin{equation}\label{t6prima}
\max\left(
\max_{x\in\cR}\left|\frac{\P_\nu(X(\t_\d)=x\,|\,X(\t_\d)\in\cR)}{\m_{\cR,\l}^*(x)}-1\right|,
\max_{x\in\XR}\left|\frac{\P_\nu(X(\t_\d)=x\,|\,X(\t_\d)\not\in\cR)}{\m_{\XR,\kappa}^*(x)}-1\right|
\right)<\d\,.
\end{equation}
Moreover if
$\xi= \max\left(e^{\kappa T_{\d,\cR,\l}^*}-1, e^{\l T_{\d,\XR,\kappa}^*}-1 \right) < 1$,
it holds
\begin{equation}\label{t6seconda}
\P_\nu\left(\t_\d>t\left(\sfrac{1}{\kappa} +\sfrac{1}{\l}\right) \right)
\leq e^{-t}\left\{\frac{1}{1-\xi} \right\}\,.
\end{equation}
\end{theorem}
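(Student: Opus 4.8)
The plan is to prove the two assertions separately, relying throughout on the following elementary features of the construction: in the $i$-th interval $(\t_{i-1},\t_i]$ two independent clocks are run, an $\mathrm{Exp}(\k)$ clock $\s_\k^{(i)}$ measuring $\cR$-local time and an $\mathrm{Exp}(\l)$ clock $\s_\l^{(i)}$ measuring $\XR$-local time; the interval ends when one of them rings, the ringing clock equals the corresponding accumulated local time increment while the other strictly exceeds the other accumulated local time. In particular $X(\t_i)\in\cR$ iff the $\k$-clock rang, and $\t_i-\t_{i-1}\le\s_\k^{(i)}+\s_\l^{(i)}$.

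For \eqref{t6prima} I would treat, by symmetry, only the event $\{X(\t_\d)\in\cR\}$. On it condition~(i) holds at step $i_0$, so during the last interval more than $T^*_{\d,\cR,\l}$ units of $\cR$-local time are accrued and, the interval ending in $\cR$, its $\l$-clock never rings. Using the strong Markov property at $\t_{i_0-1}$ and then at the first time $s_0$ that interval enters $\cR$, together with the memorylessness of the exponential clock (so that, conditioned on not having rung before $s_0$, the residual $\l$-clock is again an independent $\mathrm{Exp}(\l)$, while no $\cR$-local time has yet been spent so the $\k$-clock is still fresh), the trajectory traced on $\cR$ from $s_0$ on, parametrised by $\cR$-local time and conditioned on this residual $\l$-clock surviving, is precisely the object to which the generalised Theorem~\ref{th:dinamica} (Theorem~\ref{th:generalization}, with the extra index $\l$) applies. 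Since $\e^*_{\cR,\l}<1/3$ and more than $T^*_{\d,\cR,\l}$ units of $\cR$-local time are accrued, that theorem gives the desired $\d$-closeness of the conditional law of $X(\t_\d)$ to $\m^*_{\cR,\l}$ upon integrating over the (random) value $>T^*_{\d,\cR,\l}$ of the $\k$-clock and over the entrance state. The same argument with $(\cR,\l)$ replaced by $(\XR,\k)$ treats $\{X(\t_\d)\notin\cR\}$ and yields \eqref{t6prima}.

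For \eqref{t6seconda} the plan is to first show that $i_0$ has a geometric tail and then upgrade this to an exponential tail for $\t_\d=\t_{i_0}$. Conditionally on $\cF_{\t_{i-1}}$, the two conditions fail at step $i$ only when the interval ends in $\cR$ with $\s_\k^{(i)}\le T^*_{\d,\cR,\l}$ or ends in $\XR$ with $\s_\l^{(i)}\le T^*_{\d,\XR,\l}$; integrating the two independent exponential clocks against the conditional law of the trajectory and again using that the ringing clock equals the corresponding increment, one bounds this conditional probability by $\xi$, hence $\P(i_0>n)\le\xi^n$. Moreover every failing interval contributes only $T^*_{\d,\cR,\l}\vee T^*_{\d,\XR,\l}$ plus a single exponential clock (only one of its two local time increments is not controlled by the threshold, and that one equals its clock), whereas the last interval contributes at most $\s_\k^{(i_0)}+\s_\l^{(i_0)}$. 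Feeding these into a Chernoff estimate for $\t_\d$ with the rate $\th=(1/\k+1/\l)^{-1}<\min(\k,\l)$ (so that the exponent matches $t$ on the threshold $t(1/\k+1/\l)$) and summing the resulting geometric series over $n$ gives $\P_\nu\bigl(\t_\d>t(1/\k+1/\l)\bigr)\le e^{-t}\{1/(1-\xi)\}$.

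I expect the real work to lie in the exact bookkeeping of the two coupled exponential clocks against the successive stopping times: in \eqref{t6prima}, making fully rigorous that the conditional law feeding Theorem~\ref{th:generalization} is exactly the one appearing there (the ``prelude'' spent in $\XR$ before $\cR$ is first entered, the memorylessness reducing the residual killing clock to a fresh exponential, the averaging over where the $\k$-clock lands); and in \eqref{t6seconda}, performing the elementary integral estimates over $(\s_\k^{(i)},\s_\l^{(i)})$ precisely enough to recover the stated constant $\xi$ and the clean factor $1/(1-\xi)$ rather than weaker universal constants.
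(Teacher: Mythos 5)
Your argument for (\ref{t6prima}) is essentially the paper's: decompose on $i_0$ and on the state at the beginning of the last interval, condition on the value of the fresh $\k$-clock, and apply the $\l$-indexed version of Theorem~\ref{th:dinamica} (Theorem~\ref{th:generalization}), since on the relevant event more than $T^*_{\d,\cR,\l}$ units of $\cR$-local time are accumulated before the $\k$-clock rings and the conditioning is exactly survival of the soft killing at rate $\l$. Your extra care about the ``prelude'' in $\XR$ (strong Markov at the first entrance into $\cR$ plus memorylessness of the residual $\l$-clock, to cover a starting point outside $\cR$) addresses a point the paper glosses over; this half is fine.

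The second part, however, has a genuine gap. First, the per-step failure probability is not bounded by $\xi$: a step fails either by ending in $\cR$ with $\s_\k^{(i)}$ below its threshold or by ending in $\XR$ with $\s_\l^{(i)}$ below its threshold, and these contributions add. For a trajectory that first accumulates (slightly more than) the $\XR$-threshold $T_2$ of $\XR$-local time and then sits in $\cR$ for a very long time, the failure probability is close to $1-e^{-\k T_1-\l T_2}$ ($T_1$ being the $\cR$-threshold), which for $\k T_1=\l T_2=s$ small is about $2s$, whereas $\xi\approx s$; so $\P(i_0>n)\le\xi^n$ is not available. Indeed $\xi$ is not a failure probability at all: in the paper it arises, inside Lemma~\ref{lem:exponential}, as the product $(1-e^{-\k T})e^{\k T}$ of a \emph{single-clock} failure probability and the exponential cost of the time consumed by the failed attempt. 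Second, the Chernoff step at rate $\th=(\sfrac{1}{\k}+\sfrac{1}{\l})^{-1}$ cannot recover the stated constant: bounding the last interval by $\s_\k^{(i_0)}+\s_\l^{(i_0)}$ already costs $\E[e^{\th(\s_\k+\s_\l)}]=(\k+\l)^2/(\k\l)\ge 4$, while the theorem's prefactor $\{1/(1-\xi)\}$ must tend to $1$; moreover the geometric ratio of your series would be the per-step failure probability times $e^{\th T^*}\,\E[e^{\th\s}]$, which need not even be smaller than $1$ under the sole hypothesis $\xi<1$ (take $\k\gg\l$, so that $\E[e^{\th\s_\l}]=(\k+\l)/\l$ is large). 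The paper never couples the two clocks in the time estimate: since $\t_\d=L_\cR(\t_\d)+L_{\XR}(\t_\d)$, the event $\{\t_\d>t(\sfrac{1}{\k}+\sfrac{1}{\l})\}$ forces $\k L_\cR(\t_\d)>t$ or $\l L_{\XR}(\t_\d)>t$; splitting according to which scaled local time is the larger, each case involves only one family of exponential clocks, the corresponding local time is dominated by $\sum_{j\le N}\s^{(j)}$ with $N$ the index of the first clock exceeding its threshold, and Lemma~\ref{lem:exponential} then gives exactly $e^{-t}\{1/(1-\xi)\}$. You need this single-clock decomposition (or an equivalent device) to obtain the bound as stated.
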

\begin{proof} See Section~\ref{soft_p_4}.\end{proof}
\begin{remark}
For $\kappa$ and $\l$ small enough, we have $\e_{\cR,\l}^*<1/3$
and $\e_{\XR,\l}^*<1/3$. Then, when $\kappa$ and $\l$ decrease
to $0$, we have non-increasing upper bounds on $T_{\d,\cR,\l}^*$
and $T_{\d,\XR,\k}^*$. As a consequence, the condition $\xi<1$
will always be satisfied for $\kappa$ and $\l$ small enough
and the theorem says that starting from any configuration the
system is close to a random mixture of two states ($\mu_{\cR,\lambda}^*$
and $\mu_{\XR,\kappa}^*$, close to $\mu_\cR$ and $\mu_{\XR}$ respectively)
after a time of order $T_{\delta,\cR,\lambda}^* + T_{\delta,\XR,\kappa}^*$.
\end{remark}

As previously we  make special choices for the parameter $\delta$ and we set
\begin{equation}
T_{\cR,\lambda}^* = T_{\e_{\cR,\lambda}^*,\cR,\lambda}^*\quad\mbox{and} \quad
T_{\XR,\k}^*= T_{\e_{\XR,\k}^*,\XR,\k}^*
\end{equation}
We then have, as soon as $\e_{\cR,\l}^* < 1/3$,
\begin{equation}
\phi_{\cR,\l}^* T_{\cR,\l}^*
 \leq \e_{\cR,\l}^* \left(
	\ln \frac{3}{\varepsilon_{\cR,\l}^* \zeta^*_{\cR,\l}}
\right) \left\{
	\left(
		1 + \sqrt{
			\frac{\varepsilon_{\cR,\l}^*}{1 - \e_{\cR,\l}^*}
		}
	\right) \left(
		\frac{1 - \e_{\cR,\l}^*}{1 - 3\varepsilon_{\cR,\l}^*}
	\right)
\right\}
\,.
\end{equation}

Now the proofs of Section~\ref{exp_law_p} carry over this more general setup and we get
asymptotic exponential laws for the transition time $\tau_{\XR,\lambda}$.
\begin{theorem}[Asymptotic transition law]\label{th:t7}
For all $\l\geq 0$, Theorem \ref{th:tempo}, Lemma~\ref{lem:piR}
and inequality~(\ref{banana})
hold with an extra index $\l$.
\end{theorem}
We can also give sharp estimates
on the mean transition time and asymptotics of the mixing time.
\begin{theorem}[Mean transition time estimates]\label{th:t8}
For all $\kappa, \lambda>0$,
setting $\phi_\kappa^\l=\phi_\kappa^\l(\cR,\XR)$ (recall~(\ref{phi(A,B)})),
it holds
\begin{equation}
\left\{
\ba{l}
	\phi_{\cR,\l}^*
	\geq
	\frac{C_\kappa^\l(\cR,\XR)}{\m(\cR)}\left\{
		\frac{1 - \mu(\mathcal{R}) - 2 \phi^*_{\mathcal{R}, \lambda} / \lambda}{1 - \mu(\mathcal{R})}
	\right\}
	\left\{
		1 - \max\left(
			\frac{\kappa+\phi_\kappa^\l}{\g_\cR}, \frac{\l+\phi_\kappa^\l}{\g_{\XR}}
		\right)
	\right\} \,, \\
	{\phi^*_{\mathcal{R}, \lambda}}
	\leq
	\frac{
		C_\kappa^\lambda(\mathcal{R}, \mathcal{X}\setminus\mathcal{R})
	}{
		\mu(\mathcal{R})
	}\left\{
		1 + \varepsilon^*_{\mathcal{R}, \lambda}
		+ \varepsilon^*_{\mathcal{R}, \lambda} \ln \frac{
			1
		}{
			\varepsilon^*_{\mathcal{R}, \lambda} \zeta_{\mathcal{R}}
		}
		+ \frac{
			\phi^*_{\mathcal{R}, \lambda}
		}{
			\kappa
		}
	\right\}
	\,.
\ea
\right.
\end{equation}
\end{theorem}
\begin{proof}
See Section~\ref{soft_p_5}.
\end{proof}
\par\noindent
\emph{Remarks}
\begin{itemize}
\item[i)] In the regime
	$\varepsilon_\cR^* + \varepsilon_{\XR}^* + \phi_\cR^*/\gamma_{\XR} \ll 1$
	and assuming $\mu(\cR)\leq \mu(\XR)$
	one can choose $\kappa$ and $\lambda$ in such a way that
	$\phi_\cR^*\ll \kappa \ll \gamma_\cR$ and
	$\phi_\cR^*, \phi_{\XR}^*\ll \lambda \ll \gamma_{\XR}$,
and	then we get matching bounds provided
	$\varepsilon_{\cR,\lambda}^*\ll \ln(1/\zeta_{\mathcal{R}})$.
	Once again, all the relevant quantities can be estimated
	via a two-sided variational principle.
\item[ii)] This logarithmic term in the upper bound looks spurious.
	An upper bound without such a term should hold
	but we were not able to derive it.
\end{itemize}

\begin{theorem}[Mixing time asymptotics]\label{mix}
For $\kappa,\l > 0$ and any $x \in \mathcal{X}$,
we define $\cT = \ell^{-1}_{\XR}(\sigma_\lambda)$
and $\nu_x = \P_x(X(\cT) = \cdot)$. Then,
if $\varepsilon^*_{\mathcal{X} \setminus \mathcal{R}, \kappa} < 1 / 3$
\begin{eqnarray}
\|\nu_x - \mu_{\XR}\|_{TV} &\leq & \frac{1}{2}\varepsilon_{\XR, \kappa}^* + \lambda T_{\XR, \kappa}^*, \label{giggiola}\\
\|\nu_x - \mu\|_{TV}
&\leq & \mu(\cR) + \sqrt{
	\frac{
		\varepsilon_{\XR, \kappa}^*
	}{
		1 - \varepsilon^*_{\mathcal{X} \setminus \mathcal{R}, \kappa}
	}
} + \lambda T_{\XR,\kappa}^*
\,.
\end{eqnarray}
In addition, if
\begin{equation}
\eta =
\mu(\cR)
+ 2 \left(
	\sqrt{
		\frac{
			\varepsilon_{\XR, \kappa}^*
		}{
			1 - \varepsilon^*_{\mathcal{X} \setminus \mathcal{R}, \kappa}
		}
	} + \lambda T_{\XR,\kappa}^*
\right)
< \frac{1}{2}
\,,
\end{equation}
then, with
\begin{equation}
t_{\rm mix} = \inf_{t\geq 0}\left\{
		\max_{x\in\cX}\|\P_x(X(t) = \cdot) - \mu\|_{\mbox{{\tiny TV}}}
	\leq \frac{1}{2}\left(
		\eta + \frac{1}{2}
	\right)
\right\}
\,,
\end{equation}
we have
\begin{equation}
t_{\rm mix} \leq \frac{2}{
	\phi_{\cR,\lambda}^* \left(
		\frac{1}{2} - \mu(\mathcal{R})
	\right)
}
\left\{
	1
	+ \varepsilon^*_{\mathcal{R}, \lambda}
	+ \varepsilon^*_{\mathcal{R}, \lambda} \ln \frac{1}{\varepsilon^*_{\mathcal{R}, \lambda} \zeta_R}
	+ \frac{\phi^*_{\mathcal{R}, \lambda}}{\lambda}
\right\}
\,.
\end{equation}
\end{theorem}
\begin{proof} See Section~\ref{soft_p_6}.\end{proof}
\begin{remark}
The theorem makes sense
in the regime $\varepsilon_{\cR}^* + \varepsilon^*_{\XR} + \phi_{\cR}^*/\gamma_{\XR} \ll 1$.
One can then choose~$\lambda$ such that $\phi_{\cR, \lambda}^*\ll \lambda \ll \gamma_{\XR,\kappa}$.
If $\lambda T^*_{\XR, \kappa} \ll 1$ then $(1 + 2 \eta)/ 4$
can be made as close as $(1 + 2 \mu(\mathcal{R})) / 4 < 1 / 2$ as we want.
If $\varepsilon^*_{\mathcal{R}, \lambda} \ln (1 / \zeta_{\mathcal{R}}) \ll 1$,
then the theorem provides the correct order for the mixing time,
since the spectral gap goes like $\phi_{\cR,\lambda}^*/\mu(\XR)$
and $\mu(\XR)\geq 1/2$.
\end{remark}

Let us finally summarize our results.
To have a mathematical model of the metastability
phenomenon described by properties (a)-(c),
we first consider a reversible Markov process on a finite state space~$\mathcal{X}$,
and a subset $\mathcal{R}$ of $\mathcal{X}$ such that $\mu(\mathcal{R}) < \mu(\mathcal{X} \setminus \mathcal{R})$,
with $\mu$ the equilibrium measure of the process.
We the describe metastable states by soft measures associated
with $\cR$  in the regime
$\varepsilon_\cR^* + \varepsilon_{\XR}^* + \phi_\cR^*/\gamma_{\XR} \ll 1$.
In this regime all soft measures are close to the restricted ensemble
(Theorem~\ref{th:generalization}).
If we choose $\kappa$ and $\lambda$ such that
$\phi_\cR^* \ll \kappa \ll \gamma_\cR$ and
$\phi_\cR^*, \phi_{\XR}^* \ll \lambda \ll \gamma_{\XR}$
then we can show
\begin{itemize}
\item[i)]
  local thermalization towards the soft measure
  $\mu_{\cR,\lambda}$ or $\mu_{\XR,\kappa}$ starting from any configuration
  in $\cX$ and on a {\em short time scale}
  $\frac{1}{\kappa} + \frac{1}{\lambda}$ (Theorem~\ref{th:t6}),
\item[ii)]
  exponential asymptotic transition time on a {\em long time scale}
  $\frac{1}{\phi_{\cR,\lambda}^*}\sim \frac{\mu(\cR)}{C_\kappa^\lambda(\cR,\XR)}$
  (Theorems~\ref{th:t7} and~\ref{th:t8}),
\item[iii)]
  return time to metastable state on a {\em still longer time scale}
  $\frac{1}{\phi_{\XR,\kappa}^*}\sim \frac{\mu(\XR)}{C_\kappa^\lambda(\cR,\XR)}$
  (Theorem~\ref{th:t8} applied to $\XR$ in place of $\cR$).
\end{itemize}
In addition relaxation and mixing times are of the same order as the mean
transition time (Theorems~\ref{th:gap&cap} and~\ref{mix})
- in particular the relaxation time has the same exact asymptotic up to
a factor $\mu(\XR)$ - while exit times are on long, but generally shorter,
time scale (Theorem~\ref{th:phi&cap}). And we note once again, that all relevant
quantities can be estimated via two-sided variational principles.

\section{Analysis in $\ell^2(\mu_\cR)$}\label{quasi_restr_p}
\subsection{Proof of Proposition~\ref{prop:varH}}\label{quasi_restr_p_1}
We recall that the reflected process $X_\cR$ is reversible w.r.t. $\m_\cR$ with spectral gap $\g_\cR$.
In particular, for any function $f\in\ell^2(\m_\cR)$,
we have the Poincar\'{e} inequality
$\var_{\m_\cR}(f)\leq \frac{1}{\g_\cR}\cD_{\cR}(f)$, where $\cD_\cR(f)$ is the
Dirichlet form of $f$ given by
\begin{equation}\label{D}
\cD_\cR(f)=\langle f, -\cL_\cR f\rangle_\m =
\sum_{x,y\in\cR}\m_\cR(x)f(x)(\d_x(y)-p_\cR(x,y))f(y)\,.
\end{equation}
Applying the Poincar\'{e} inequality to $h_\cR^*$, and
 exploiting the definition of $p_\cR$ and $p_\cR^*$, we get
\begin{equation}\label{varH1}
\begin{split}
\var_{\m_\cR}(h_\cR^*)&\leq
\frac{1}{\g_\cR}\cD_\cR(h_\cR^*)=\frac{1}{\g_\cR}
\sum_{x,y\in\cR}\m_\cR(x)h_\cR^*(x)\left(\d_x(y)-p_\cR(x,y)\right)h_\cR^*(y)\\
&=
\frac{1}{\g_\cR}\left(\m_\cR({h_\cR^*}^2)- \sum_{x,y\in\cR}\m_\cR^*(x)p_\cR(x,y)h_\cR^*(y)\right)\\
&=
\frac{1}{\g_\cR}\left(\m_\cR({h_\cR^*}^2)- \sum_{x,y\in\cR}\m_\cR^*(x)(p(x,y)+\d_x(y)e_\cR(x))
h_\cR^*(y)\right)\\
&\leq
\frac{1}{\g_\cR}\left(\m_\cR({h_\cR^*}^2)-\sum_{x,y\in\cR}\m_\cR^*(x)p_\cR^*(x,y)h_\cR^*(y)\right)\,.
\end{split}
\end{equation}
From the last line, using that $\m_\cR^*$ is a left eigenvector of $p_\cR^*$
with eigenvalue $(1-\phi_\cR^*)$ and that $\m_\cR^*(h_\cR^*)=\m_\cR({h_\cR^*}^2)\,,$
we get
\begin{equation}\label{varH2}
\var_{\m_\cR}(h_\cR^*)\leq
\frac{\phi_\cR^*}{\g_\cR}\m_\cR({h_\cR^*}^2)
=\frac{\phi_\cR^*}{\g_\cR}\left(\var_{\mu_\cR}(h_\cR^*)+1\right)\,.
\end{equation}
Finally, rearranging the terms in the above inequality and from the hypothesis
$\e_\cR^*=\frac{\phi_\cR^*}{\g_\cR}<1$, we obtain the required upper bound.

\subsection{Proof of Lemma~\ref{lem:phiphi}}\label{quasi_restr_p_2}
Let us denote by $\gL$ the sub-Markovian generator associated
to the kernel $p^*_{\cR}$. For any function $f\in\ell^2(\mu_{\cR})$, this is defined as
\begin{equation}\label{L^*}
(\gL f )(x)= -f(x)+\sum_{y\in\cR}p_{\cR}^*(x,y)f(y)\,,
\end{equation}
and we have the following useful lemma:
\begin{lemma}\label{lem:Dirichlet}
For all $f\in\ell^2(\m_\cR)$, it holds
\begin{equation}\label{Dirichletforms}
\cD_\cR(f)\leq\frac{\cD(f)}{\m(\cR)}=\langle f,-\gL f\rangle_\cR \,.
\end{equation}
\end{lemma}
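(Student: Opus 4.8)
The plan is to compare the two Dirichlet forms term by term after expressing everything through the conductances $c(x,y)=\m(x)p(x,y)$ of the original network, using the convention that a function $f\in\ell^2(\m_\cR)$ is the function on $\cX$ that vanishes on $\XR$. First I would record that for \emph{distinct} $x,y\in\cR$ one has $c_\cR(x,y)=\m_\cR(x)p_\cR(x,y)=\frac{\m(x)}{\m(\cR)}p(x,y)=\frac{c(x,y)}{\m(\cR)}$, while the reflecting modification affects only the diagonal weight $p_\cR(x,x)=p(x,x)+e_\cR(x)$, which contributes nothing to a sum of squared differences. Hence
$$\cD_\cR(f)=\frac12\sum_{x,y\in\cR}c_\cR(x,y)[f(x)-f(y)]^2=\frac{1}{\m(\cR)}\cdot\frac12\sum_{x,y\in\cR}c(x,y)[f(x)-f(y)]^2.$$

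Next I would split the double sum defining $\cD(f)=\frac12\sum_{x,y\in\cX}c(x,y)[f(x)-f(y)]^2$ according to whether $x$ and $y$ lie in $\cR$ or not. The block over $\XR\times\XR$ vanishes since $f\equiv0$ there; the two mixed blocks are equal by the symmetry $c(x,y)=c(y,x)$, and together they contribute $\sum_{x\in\cR}f(x)^2\sum_{y\notin\cR}c(x,y)=\sum_{x\in\cR}\m(x)e_\cR(x)f(x)^2$. Combining this with the previous display gives
$$\cD(f)=\m(\cR)\,\cD_\cR(f)+\sum_{x\in\cR}\m(x)e_\cR(x)f(x)^2\ \geq\ \m(\cR)\,\cD_\cR(f),$$
which is precisely $\cD_\cR(f)\leq\cD(f)/\m(\cR)$; the ``defect'' is the escape-weighted boundary term, nonnegative because $e_\cR\geq0$.

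For the stated identity $\cD(f)/\m(\cR)=\langle f,-\gL f\rangle_\cR$, I would observe that the sub-Markovian generator $\gL$ is just the restriction of $\cL$: since $p_\cR^*(x,y)=p(x,y)$ for $x,y\in\cR$ and $f$ vanishes off $\cR$, one has $(-\gL f)(x)=f(x)-\sum_{y\in\cR}p(x,y)f(y)=(-\cL f)(x)$ for every $x\in\cR$. Therefore, pulling the factor $\m(\cR)$ through $\m_\cR=\m(\cdot\,|\,\cR)$ and adding vanishing terms over $\XR$,
$$\m(\cR)\,\langle f,-\gL f\rangle_\cR=\sum_{x\in\cR}\m(x)f(x)(-\cL f)(x)=\langle f,-\cL f\rangle=\cD(f).$$

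I do not expect any genuine obstacle here: the argument is bookkeeping. The only points that require a little care are keeping track of the factor $\m(\cR)$ coming from the conditioning in $\m_\cR$, noting that the reflecting self-loops drop out of $\cD_\cR$ because diagonal terms disappear from a sum of squared differences, and consistently using the extension-by-zero convention so that $\langle f,-\cL f\rangle$ picks up no contribution from $\XR$.
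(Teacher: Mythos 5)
Your proof is correct and follows essentially the same route as the paper: both rest on the observations that $p_\cR(x,y)=p(x,y)$ for distinct $x,y\in\cR$ (so the reflecting self-loops drop out of $\cD_\cR$) and that the extension-by-zero identification makes $\gL$ act like $\cL$ on $\ell^2(\m_\cR)$. The only cosmetic difference is that you record the exact identity $\cD(f)=\m(\cR)\,\cD_\cR(f)+\sum_{x\in\cR}\m(x)e_\cR(x)f(x)^2$, whereas the paper simply enlarges the summation range to get the inequality.
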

\begin{proof}[Proof of Lemma \ref{lem:Dirichlet} ]
For all $x,y\in\cR$ with $x\neq y$, $p_\cR(x,y)=p(x,y)$. Then we have
\begin{equation}
\begin{split}
\cD_\cR(f)&=
\frac{1}{2}\sum_{x,y\in\cR}\m_\cR(x)p_\cR(x,y)\left[f(x)-f(y)\right]^2\\
&=
\frac{1}{2}\sum_{x,y\in\cR}\m_\cR(x)p(x,y)\left[f(x)-f(y)\right]^2
\,,
\end{split}
\end{equation}
since only the terms in $x\neq y$ matter in this sum.
Thus, extending the sum to all $x,y\in\cX$,
\begin{equation}
\cD_\cR(f)\leq
\frac{1}{2}\sum_{x,y\in\cX}\m_\cR(x)p(x,y)\left[f(x)-f(y)\right]^2\leq \frac{\cD(f)}{\m(\cR)}\,,
\end{equation}
and this provides the stated upper bound.

To prove the equality, we recall that the space $\ell^2(\m_\cR)$
is identified with the subset of functions $f\in\ell^2(\mu)$ with
$f_{|_{\XR}}\equiv0$. Since, for all $x,y\in \cR$, it holds
that $\m_\cR(x)=\m(x)/\m(\cR)$ and $p_\cR^*(x,y)=p(x,y)$, we have
\begin{equation}
\begin{split}
\frac{\cD(f)}{\m(\cR)}&=\frac{1}{\m(\cR)}
\sum_{x,y\in\cX}\m(x)f(x)\left(\d_x(y)-p(x,y)\right)f(y)\\
&=
\sum_{x,y\in\cR}\m_\cR(x)f(x)\left(\d_x(y)-p_\cR^*(x,y)\right)f(y)\\
&= \langle f, -\gL f\rangle_\cR\,,
\end{split}
\end{equation}
which concludes the proof.
\end{proof}

We can now proceed with the proof of Lemma \ref{lem:phiphi}.
Since $1 - \phi_\cR^*$ is the largest eigenvalue
of $p_\cR^*$, we have
\begin{equation}\label{fragola}
\phi_{\cR}^* =\min_{\stackrel{\scriptstyle f:\cR\rightarrow \R}{\scriptstyle f\neq 0}}
\frac{\langle f,-\gL f\rangle_\cR}{\langle f,f\rangle_\cR}\,,
\end{equation}
then the equality in Lemma \ref{lem:phiphi}
is a consequence of Lemma \ref{lem:Dirichlet}.
Taking $f=\1_\cR$ as test function in (\ref{fragola}), we get
\begin{equation}
\begin{split}
\phi_\cR^*&\leq \sum_{x\in\cR}\m_\cR(x)\left(1-\sum_{y\in\cR}p_\cR^*(x,y)\right)\\
&= \sum_{x\in\cR}\m_\cR(x)\left(1-\sum_{y\in\cR}p(x,y)\right)\\
&=\sum_{x\in\cR}\m_\cR(x)e_\cR(x)=\phi_\cR\,,
\end{split}
\end{equation}
and it remains to prove that $\E_{\mu_R}\left[\tau_\XR\right]$
lies between $1/\phi_\cR$ and $1/\phi_\cR^*$.

Since, for any $k\in {\mathbb N}$,
$(1 -\phi_\cR^*)^k$ is the largest eigenvalue of
$(p_\cR^*)^k$, the same argument gives
\begin{equation}
1 - (1 -\phi_\cR^*)^k \leq \sum_{x \in \cR} \mu_\cR(x)\left(
  1 - \sum_{y \in \cR} \left(
    p_\cR^*
  \right)^k (x, y)
\right)
\end{equation}
namely,
\begin{equation}
(1 -\phi_\cR^*)^k \geq \sum_{x \in \cR} \mu_\cR(x)
\sum_{y \in \cR} \left(
  p_\cR^*
\right)^k (x, y)\,.
\end{equation}
By summing on $k \geq 0$
and with $\hat X$ the discrete time version
of $X$, such that $X$ follows $\hat X$
at each ring of a Poissonian clock of intensity $1$,
we have, with obvious notation,
\begin{equation}
\frac{1}{\phi_\cR^*} = \sum_{k\geq 0} (1 - \phi_\cR^*)^k
\geq \sum_{k \geq 1} \P_{\mu_\cR}(\hat \tau_\XR \geq k)
= \E_{\mu_\cR}[\tau_\XR]
\geq \P_{\mu_\cR}(\hat \tau_\XR = 1)
= \frac{1}{\phi_\cR}\,.
\end{equation}

\subsection{Proof of Proposition~\ref{prop:gaps}}\label{quasi_restr_p_3}
The second smallest eigenvalue
of the sub-Markovian generator $\gL$, $\phi_\cR^*+\g_\cR^*$, satisfies the variational formula
\begin{equation}
\begin{split}
\phi_\cR^*+\g_\cR^*
&=
\min\left\{\frac{\langle f,-\gL f\rangle_\cR}{\langle f,f\rangle_\cR}\,:
f\neq 0 \,,\langle f,h_\cR^*\rangle_\cR =0\right\}\\
&=
\min\left\{\langle f,-\gL f\rangle_\cR\,:
\langle f,h_\cR^*\rangle_\cR =0\,,\langle f,f\rangle_\cR=1\right\}
\end{split}
\end{equation}
Let $f$ be a function on $\cR$ that realizes the minimum in the above definition,
with $\langle f,f\rangle_\cR=1$. Since $\langle f,h_\cR^*\rangle_\cR =0$, we have

$$\langle f,h_\cR^*-\1_\cR\rangle_\cR =
-\langle f,\1_\cR\rangle_\cR =-\m_\cR(f)$$
and then, by the Cauchy-Schwartz inequality together
with Proposition \ref{prop:varH},
\begin{equation}\label{stimaProp2}
\m_\cR^2(f)\leq \|f\|_\cR^2 \cdot
\|h_\cR^*-\1_\cR\|_\cR^2\leq\frac{\e_\cR^*}{1-\e_\cR^*}\,.
\end{equation}
Now, writing the orthogonal decomposition $f=\m_\cR(f)+g$,
with $\m_\cR(g)=0$, we have
$$1=\|f\|_\cR^2 =\m_\cR^2(f)+\|g\|_\cR^2$$
and thus, from (\ref{stimaProp2}),
$$\|g\|_\cR^2= 1- \m_\cR^2(f)\geq
1-\frac{\e_\cR^*}{1-\e_\cR^*}=\frac{1-2\e_\cR^*}{1-\e_\cR^*}\,.$$
Using $g$ as a test function in
\begin{equation}\label{gapRefl}
\g_\cR=
\min\left\{\frac{\cD_\cR(h)}{\|h\|_\cR^2}\,:
h\neq 0 \,,\m_\cR(h) =0\right\}\,,
\end{equation}
we get
\begin{equation}\label{stimagap1}
\g_\cR\leq \frac{1-\e_\cR^*}{1-2\e_\cR^*}\cD_\cR(g)=\frac{1-\e_\cR^*}{1-2\e_\cR^*}\cD_\cR(f)\,.
\end{equation}
From Lemma \ref{lem:Dirichlet},
and using that
$f$ was chosen in order to have
$\langle f,-\gL f\rangle_\cR=\phi_\cR^*+\g_\cR^*$,
we get
\begin{equation}\label{stimagap2}
\g_\cR\leq \frac{1-\e_\cR^*}{1-2\e_\cR^*}\langle f,-\gL f\rangle_\cR
=\frac{1-\e_\cR^*}{1-2\e_\cR^*}(\phi_\cR^*+\g_\cR^*)\,.
\end{equation}
Setting $\phi_\cR^*=\e_\cR^*\g_\cR$
and rearranging the terms in the last inequality, we get
$$\left(\frac{1-3\e_\cR^*+{\e_\cR^*}^2}{1-2\e_\cR^*}\right)\g_\cR\leq
\left(\frac{1-\e_\cR^*}{1-2\e_\cR^*}\right)\g_\cR^*\,,
$$
which, under the hypothesis $\e_\cR^*<1/3$, implies
$$\frac{1}{\g_\cR^*}\leq\frac{1}{\g_\cR}
\left\{\frac{1-\e_\cR^*}{1-3\e_\cR^*}\right\}\,.$$

\subsection{Proof of Theorem~\ref{th:dinamica}}\label{sec:recurrence}
The proof is based on a classical trick to control mixing times with
relaxation times.
For any probability measure $\nu$ on $\cR$, any $f:\cR\to \R$ such that
$\m_\cR^*(f)\neq0$ and any $s,t\geq 0$, one can check that
\begin{equation}\label{trick}
\begin{split}
\E_\nu [f(X(s+t))\1_{\{\t_{\XR}>s+t\}}]&
-\m_\cR^*(f)\P_{\nu}(\t_{\XR}>s+t)\\
&=\sum_{y\in\cR}
\left(\P_{\nu}(X(s)=y\,,\,\t_{\XR}>s)-
\P_{\nu}(\t_{\XR}>s)\m_\cR^*(y)\right)\\
&\times
\left( \E_y[f(X(t))\1_{\{\t_{\XR}>t\}}]
- \P_y(\t_{\XR}>t)\m_\cR^*(f)\right)\,.
\end{split}
\end{equation}
Indeed, one can rewrite the right-hand side of the above equality
as the sum of four terms, two of which coincide with the two terms
in the left-hand side by the Markov property, while the other two
terms cancel using the quasi-stationarity property, i.e.
\begin{equation}\label{q-statproperty}
\E_{\m_\cR^*}\left[f(X(t))\,|\,\t_{\XR}>t\right]= \m_\cR^*(f)\,.
\end{equation}
As a consequence, by the Cauchy-Schwartz inequality one gets
\begin{equation}\label{trick1}
\begin{split}
\big|\E_\nu [f(X(s+t))&\1_{\{\t_{\XR}>t\}}]
-\m_\cR^*(f)\P_{\nu}(\t_{\XR}>s+t)\big|
\\
&\leq \left\|\sfrac{\P_{\nu}(X(s)=\cdot\,,\,\t_{\XR}>s)}{\m_\cR(\cdot)}-
\P_{\nu}(\t_{\XR}>s)h_\cR^*(\cdot)\right\|_\cR\\
&\times
\left\| \E_{(\cdot)}[f(X(t))\1_{\{\t_{\XR}>t\}}]
- \P_{(\cdot)}(\t_{\XR}>t)\m_\cR^*(f)\right\|_\cR\,.
\end{split}
\end{equation}
We now estimate these two factors.
Noting that
 $$\P_{\nu}(X(s)=\cdot\,,\,\t_{\XR}>s)= \nu e^{s\gL}(\cdot)\quad
\mbox{ and }\quad
\E_{(\cdot)}[f(X(t))\1_{\{\t_{\XR}>t\}}]= e^{t\gL}f(\cdot)\,,$$
and diagonalizing the self-adjoint operator $\gL$ in an orthonormal basis,
one gets
\begin{equation}\label{trick2}
\left\|\sfrac{\P_{\nu}(X(s)=\cdot\,,\,\t_{\XR}>s)}{\m_\cR(\cdot)}-
\|\sfrac{\nu}{\m_\cR}\|_{_\cR}\sfrac{h_\cR^*}
{\|h_\cR^*\|_{_\cR}}\cos\th_\nu\,e^{-\phi_\cR^*s}\right\|_\cR^2
\leq
\|\sfrac{\nu}{\m_\cR}\|_{_\cR}^2\sin^2\th_\nu e^{-2s(\phi_\cR^*+\g_\cR^*)}\,,
\end{equation}
with $\th_\nu\in[0,\pi/2[$ such that
$\|\sfrac{\nu}{\m_\cR}\|_{_\cR}\|h_\cR^*\|_{_\cR}\cos\th_\nu=
\langle\sfrac{\nu}{\m_\cR},h_\cR^*\rangle=\nu(h_\cR^*)\,,$
and
\begin{equation}\label{trick3}
\left\| \E_{(\cdot)}[f(X(t))\1_{\{\t_{\XR}>t\}}]
- \|f\|_{_\cR}\sfrac{h_\cR^*}
{\|h_\cR^*\|_{_\cR}}\cos\th_f\,e^{-\phi_\cR^*t}\right\|_\cR^2
\leq
\|f\|^2_{_\cR}\sin^2\th_f e^{-2t(\phi_\cR^*+\g_\cR^*)}\,
\end{equation}
with $\th_f\in[0,\pi]\setminus\{\pi/2\}$ such that
$\|f\|_{_\cR}\|h_\cR^*\|_{_\cR}\cos\th_f=
\langle f,h_\cR^*\rangle=\m_\cR^*(f)\,.$\\
Moreover, since
$$\P_{\nu}(\t_{\XR}>s)=
\m_\cR\left( \sfrac{\P_{\nu}(X(s)=\cdot\,,
\,\t_{\XR}>s)}{\m_\cR(\cdot)}\right)\,,$$
 by the Cauchy-Schwartz inequality and using (\ref{trick2}) we get
\begin{equation}\label{trick4}
\begin{split}
\left|\P_{\nu}(\t_{\XR}>s)-
\|\sfrac{\nu}{\m_\cR}\|_{_\cR}\sfrac{\cos\th_\nu}
{\|h_\cR^*\|_{_\cR}}\,e^{-\phi_\cR^*s}\right|&=
\left|\m_\cR\left(\sfrac{\P_{\nu}(X(s)=\cdot\,,\,\t_{\XR}>s)}{\m_\cR(\cdot)}-
\|\sfrac{\nu}{\m_\cR}\|_{_\cR}\sfrac{h_\cR^*}
{\|h_\cR^*\|_{_\cR}}\cos\th_\nu\,e^{-\phi_\cR^*s}\right)\right|\\
&\leq
\left\langle \1_\cR,
\left|\sfrac{\P_{\nu}(X(s)=\cdot\,,\,\t_{\XR}>s)}{\m_\cR(\cdot)}-
\|\sfrac{\nu}{\m_\cR}\|_{_\cR}\sfrac{h_\cR^*}
{\|h_\cR^*\|_{_\cR}}\cos\th_\nu\,e^{-\phi_\cR^*s}\right|
\right\rangle_\cR\\
&\leq \|\sfrac{\nu}{\m_\cR}\|_{_\cR}\sin\th_\nu e^{-s(\phi_\cR^*+\g_\cR^*)}\,.
\end{split}
\end{equation}
Using inequalities (\ref{trick2}) and (\ref{trick4}), we finally get
\begin{equation}\label{trick5}
\begin{split}
\left\|\sfrac{\P_{\nu}(X(s)=\cdot\,,\,\t_{\XR}>s)}{\m_\cR(\cdot)}-
\P_{\nu}(\right.&\left.\t_{\XR}>s)h_\cR^*(\cdot)\right\|_\cR
\\
&\leq
\left\|\sfrac{\P_{\nu}(X(s)=\cdot\,,\,\t_{\XR}>s)}{\m_\cR(\cdot)}-
\|\sfrac{\nu}{\m_\cR}\|_{_\cR}\sfrac{h_\cR^*}
{\|h_\cR^*\|_{_\cR}}\cos\th_\nu\,e^{-\phi_\cR^*s}\right\|_\cR\\
&+
\left\|\left(\P_{\nu}(\t_{\XR}>s)-
\|\sfrac{\nu}{\m_\cR}\|_{_\cR}\sfrac{\cos\th_\nu}
{\|h_\cR^*\|_{_\cR}}\,e^{-\phi_\cR^*s}\right)h_\cR^*\right\|_\cR\\
&\leq
\|\sfrac{\nu}{\m_\cR}\|_{_\cR}(1+\|h_\cR^*\|_\cR)\sin\th_\nu \,e^{-s(\phi_\cR^*+\g_\cR^*)}\,.
\end{split}
\end{equation}
which provides an estimate of the first factor in (\ref{trick1}).

To what concerns the second factor, noting that
$$\P_{(\cdot)}(\t_{\XR}>t)=\E_{(\cdot)}[\1_\cR(X(t))\1_{\{\t_{\XR}>t\}}]
$$
and that, from the definition of $\cos\th_f$ applied to $f=\1_\cR$,
$$\cos \th_{\1_\cR}=\sfrac{1}{\|h_\cR^*\|_{_\cR}}
\quad\mbox{ and }\quad
\sin^2\th_{\1_\cR}=\sfrac{\|h_\cR^*\|^2_{_\cR}-1}{\|h_\cR^*\|^2_{_\cR}}=
\sfrac{\var_{\m_\cR}(h_\cR^*)}{\|h_\cR^*\|^2_{_\cR}}\,,$$
from inequality (\ref{trick3}) we get
\begin{equation}\label{trick6}
\left\|\P_{(\cdot)}(\t_{\XR}>t)-
\sfrac{h_\cR^*}{\|h_\cR^*\|^2_{_\cR}}\,e^{-\phi_\cR^*t}\right\|^2
\leq \sfrac{\var_{\m_\cR}(h_\cR^*)}{\|h_\cR^*\|^2_{_\cR}}e^{-2t(\phi_\cR^*+\g_\cR^*)}\,.
\end{equation}
Then, from inequalities (\ref{trick3}) and (\ref{trick6}),

\begin{equation}\label{trick7}
\begin{split}
\left\| \E_{(\cdot)}[f(X(t))\right.&\left.\1_{\{\t_{\XR}>t\}}]
-\P_{(\cdot)}(\t_{\XR}>t)\m_\cR^*(f)\right\|_\cR
\\
&\leq
\left\|\E_{(\cdot)}[f(X(t))\1_{\{\t_{\XR}>t\}}]-
\|f\|_{_\cR}\sfrac{h_\cR^*}
{\|h_\cR^*\|_{_\cR}}\cos\th_f\,e^{-\phi_\cR^*t}\right\|_\cR\\
&+
\left\|\P_{(\cdot)}(\t_{\XR}>t)\|f\|_{_\cR}\|h_\cR^*\|_{_\cR}\cos\th_f-
\|f\|_{_\cR}\sfrac{h_\cR^*}
{\|h_\cR^*\|_{_\cR}}\cos\th_f\,e^{-\phi_\cR^*t}\right\|_\cR\\
&\leq
\|f\|_{_\cR}\sin\th_fe^{-t(\phi_\cR^*+\g_\cR^*)}+
\|f\|_{_\cR}\|h_\cR^*\|_{_\cR}\sfrac{\sqrt{\var_{\m_\cR}(h_\cR^*)}}
{\|h_\cR^*\|_{_\cR}}\cos\th_f\, e^{-t(\phi_\cR^*+\g_\cR^*)}\\
&=
\left(\|f\|_{_\cR}\sin\th_f+ \sfrac{\m_\cR^*(f)\sqrt{\var_{\m_\cR}(h_\cR^*)}}
{\|h_\cR^*\|_{_\cR}} \right)e^{-t(\phi_\cR^*+\g_\cR^*)}\,.
\end{split}
\end{equation}
which provides an estimate of the second factor in (\ref{trick1}).

Inserting (\ref{trick5}) and (\ref{trick7}) in (\ref{trick1}),
we then obtain
\begin{equation}\label{trick8}
\begin{split}
&\big|\E_\nu [f(X(s+t))\1_{\{\t_{\XR}>t\}}]
-\m_\cR^*(f)\P_{\nu}(\t_{\XR}>s+t)\big|
\\
&\leq \|\sfrac{\nu}{\m_\cR}\|_{_\cR}(1+\|h_\cR^*\|_\cR)\sin\th_\nu
\left(\|f\|_{_\cR}\sin\th_f+ \sfrac{\m_\cR^*(f)\sqrt{\var_{\m_\cR}(h_\cR^*)}}
{\|h_\cR^*\|_{_\cR}} \right)e^{-(s+t)(\phi_\cR^*+\g_\cR^*)}\,.
\end{split}
\end{equation}

To conclude our proof we will make two more steps.
First notice that from (\ref{trick4}) one also gets that, for any $t\geq0$,
$$
\P_{\nu}(\t_{\XR}>t)\geq
\|\sfrac{\nu}{\m_\cR}\|_{_\cR}
\left(\sfrac{\cos\th_\nu}{\|h_\cR^*\|_{_\cR}}e^{-t\phi_\cR^*}
-\sin\th_\nu \,e^{-t(\phi_\cR^*+\g_\cR^*)}\right)\,.
$$
In particular, as soon as the following condition is verified
\begin{equation}
\sin\th_\nu \,e^{-t(\phi_\cR^*+\g_\cR^*)}\leq
\d\sfrac{\cos\th_\nu}{\|h_\cR^*\|_{_\cR}}e^{-\phi_\cR^*t}
\,,
\end{equation}
that is
\begin{equation}\label{conditiondelta}
\|h_\cR^*\|_{_\cR}\tan\th_\nu\, e^{-\phi_\cR^*t}\leq\d\,,
\end{equation}
it holds
\begin{equation}\label{trick4bis}
\P_{\nu}(\t_{\XR}>t)\geq
(1-\d)\|\sfrac{\nu}{\m_\cR}\|_{_\cR}
\sfrac{\cos\th_\nu}{\|h_\cR^*\|_{_\cR}}e^{-\phi_\cR^*t}
\,.
\end{equation}
Now, dividing both terms of (\ref{trick8})
by $\m_\cR^*(f)\P_{\nu}(\t_{\XR}>s+t)$,
we reach an inequality that controls the Yaglom limit
and that, provided condition (\ref{conditiondelta}) holds and
then using the last inequality, reads as
\begin{equation}\label{trick9}
\left|\sfrac{\E_\nu [f(X(t))\,|\,\t_{\XR}>t]}
{\m_\cR^*(f)}-1\right|\leq
\sfrac{1+\|h_\cR^*\|_{_\cR}}{1-\d}
\tan\th_\nu \left(\tan\th_f+ \sqrt{\var_{\m_\cR}(h_\cR^*)}\right)e^{-\g_\cR^* t}\,.
\end{equation}
As a final step we apply this inequality to $\nu=\d_x$ and $f=\d_y$.
For this choice of $\nu$ and $f$, and  by definition
of $\th_\nu$ and $\th_f$, one has
$$\tan\th_\nu \leq \frac{1}{\cos\th_\nu}=
\frac{\|h_\cR^*\|_{_\cR}}{\sqrt{\m_\cR(x)h_\cR^*(x)}}
\quad \mbox{ and }\quad
\tan\th_f \leq \frac{1}{\cos\th_f}=
\frac{\|h_\cR^*\|_{_\cR}}{\sqrt{\m_\cR(y)h_\cR^*(y)}}\,,
$$
Thus, from (\ref{trick9}), we obtain that under condition (\ref{conditiondelta})
\begin{equation}\label{trick10}
\begin{split}
&\left|\sfrac{\P_x \left((X(t)=y)\,|\,\t_{\XR}>t\right)}
{\m_\cR^*(y)}-1\right|
\leq
e^{-\g_\cR^* t}
\sfrac{(1+\|h_\cR^*\|_{_\cR})\|h_\cR^*\|^2_{_\cR}}{(1-\d)\sqrt{\m_\cR(x)h_\cR^*(x)}}
\left(\sfrac{1}{\sqrt{\m_\cR(y)h_\cR^*(y)}}+
 \sfrac{\sqrt{\var_{\m_\cR}(h_\cR^*)}}{\|h_\cR^*\|_{_\cR}}\right)\\
&
\qquad\qquad\qquad
\leq e^{-\g_\cR^* t}\sfrac{1}{1-\d}\left(1+\sqrt{1+\sfrac{\e_\cR^*}{1-\e_\cR^*}}\right)
\left(1+\sfrac{\e_\cR^*}{1-\e_\cR^*}\right)
\sfrac{1}{\sqrt{\z_\cR^*}}\left(\sfrac{1}{\sqrt{\z_\cR^*}}+
\sqrt{\sfrac{\e_\cR^*}{1-\e_\cR^*}}\right)
\,,
\end{split}
\end{equation}
where in the second line we used that $\|h_\cR^*\|_{_\cR}\geq 1$,
the estimate given in Proposition \ref{prop:varH}, and we introduced the
quantity $\z_\cR^*$ defined in (\ref{zeta}).

The right-hand side of the last inequality is
smaller than $\d$ as soon as
\begin{equation}\label{conditionT1}
t\geq \sfrac{1}{\g_\cR^*}\left[\ln{\sfrac{2}{\d(1-\d)\z_\cR^*}} +
\ln\left(\left(\sfrac{1}{2}+\sfrac{1}{2}\sqrt{1+\sfrac{\e_\cR^*}{1-\e_\cR^*}}\right)
\left(1+\sfrac{\e_\cR^*}{1-\e_\cR^*}\right)
\left(1+\sqrt{\sfrac{\z_\cR^*\e_\cR^*}{1-\e_\cR^*}}\right)\right)\right]\,,
\end{equation}
which also implies  (\ref{conditiondelta}).

Finally, from the hypothesis $\e_\cR^*<1/3$,
from the concavity of the logarithm and of the square root function,
and using that $\z_\cR^*\leq 1$, then $\d(1-\d)\leq1/4$
and $\ln 8 \geq 1 + 5/(4\sqrt{2})$,
after some computation one obtains
that the condition (\ref{conditionT1}) is implied by
the stronger condition
\begin{equation}\label{conditionT2}
t\geq \sfrac{1}{\g_\cR^*}\left(\ln{\sfrac{2}{\d(1-\d)\z_\cR^*}}\right)
\left\{1 +\sqrt{\sfrac{\e_\cR^*}{1-\e_\cR^*}}\right\}\,,
\end{equation}
which, in turns, follows from $t > T^*_{\delta, \mathcal{R}}$
by using Proposition~\ref{prop:gaps}.

\section{Around the exponential law}\label{exp_law_p}

\subsection{Proof of Theorem~\ref{th:tempo}}\label{exp_law_p_2}
We write
\begin{equation}\label{probT}
\nonumber
\begin{split}
\P_\nu (\t_{\XR}>\sfrac{t}{\phi_\cR^*})&=\pi_\cR(\nu)
\P_\nu (\t_{\XR}>\sfrac{t}{\phi_\cR^*}\,|\,\t_{\XR}<T_\cR^*)\\
&+(1-\pi_\cR(\nu))
\P_\nu (\t_{\XR}>\sfrac{t}{\phi_\cR^*}\,|\,\t_{\XR}>T_\cR^*)\,.
\end{split}
\end{equation}
If $t\geq\phi_\cR^* T_\cR^*$, the first term in the r.h.s equals zero and we get
$$
\P_\nu \left(\t_{\XR}>\sfrac{t}{\phi_\cR^*}\right)=(1-\pi_\cR(\nu))
\P_\nu \left(\t_{\XR}>\sfrac{t}{\phi_\cR^*}\,|\,\t_{\XR}>T_\cR^*\right)\,.
$$
By Theorem \ref{th:dinamica}, we also have
\begin{equation}
\nonumber
\left| \P_\nu \left(\t_{\XR}>\sfrac{t}{\phi_\cR^*}\,|
\,\t_{\XR}>T_\cR^*\right)
-e^{-\phi_\cR^*(\sfrac{t}{\phi_\cR^*}-T_\cR^*)}\right|
\leq \e_\cR^*e^{-\phi_\cR^*(\sfrac{t}{\phi_\cR^*}-T_\cR^*)}\,,
\end{equation}
which together with the previous equality completes the proof.

\subsection{Proof of Lemma~\ref{lem:piR}}\label{exp_law_p_3}
On the one hand we have
\begin{equation}
	\P_{\mu_\cR^*}(\tau_{\XR} \leq T_\cR^*)
	 = 1 - e^{-\phi_\cR^* T_\cR^*}
	\leq \phi_\cR^* T_\cR^*
	\,.
\end{equation}
On the other hand, denoting by $d_{TV}(\mu,\nu)$
the total variation distance between $\mu$ and $\nu$,
from the $\ell^2(\mu_\cR)$ estimate
given in Proposition~\ref{prop:varH},
together with the Cauchy-Schwartz inequality,
we get
\begin{equation}\label{distanceTV}
\begin{split}
d_{TV}(\m_{\cR},\m_{\cR}^*)
&=
\frac 12\sum_{x\in\cR}\left|\m_\cR(x)- \m_\cR^*(x)\right|
= \frac 12\sum_{x\in\cR}\m_\cR(x)\left|1- h_\cR^*(x)\right|\\
&\leq \frac 12
\var_{\m_\cR} (h_\cR)^{1/2}
\leq \frac 12 \sqrt{\frac{\e_\cR^*}{1-\e_\cR^*}}\,.
\end{split}
\end{equation}
We then derive the desired result by using an optimal coupling.

\section{Working with $(\kappa, \lambda)$-capacities}\label{klc}
\subsection{Proof of the upper bound in Theorem \ref{th:phi&cap}}\label{sole}
Let $\ti X$  denote the continuous-time Markov chain on $\ti\cX$
defined, for $\ti \kappa>0$,by the generator
\begin{equation}\label{generator}
(\ti\cL f)(\ti x)=
\left\{
\ba{ll}
\ti \kappa (f(x)-f(\bar x)) & \mbox{ if } \ti x=\bar x\in\bar\cR\\
(\cL f)(x)+\kappa (f(\bar x)-f(x))& \mbox{ if } \ti x=x\in\cR\\
(\cL f)(x)& \mbox{ if } \ti x=x\in\XR
\ea
\right.\,.
\end{equation}
This is a reversible process with respect to  a measure
$\ti \mu$ defined as
\begin{equation}\label{tilde-mu}
\ti\m(\ti x)=\left\{
\ba{ll}
\m(x)& \mbox{if } \ti x=x\in\cX\\
\sfrac{\kappa}{\ti \kappa}\m(x)& \mbox{if } \ti x=\bar x\in\bar\cR
\ea
\right.\,.
\end{equation}
Note that $\ti\mu$ is not a probability measure.
Let us denote by $\ti\nu_{\bar\cR}$ the harmonic measure
on $\bar\cR$ associated with $\XR$, i.e.,
the probability measure on $\bar\cR$ defined by
\begin{equation}\label{har-measure}
\ti\nu_{\bar\cR}(\bar x)=\frac{-\ti\mu(\bar x)
(\ti\cL\ti V_\kappa)(\bar x)}{C_\kappa (\cR,\XR)}
\end{equation}
and with
$$
\ti V_\kappa (\ti x)=\left\{
\ba{ll}
V_\kappa (x)=\P_x(\s_\kappa<\t_{\XR}) & \mbox{ if } \ti x=x\in\cR\\
1 & \mbox{ if } \ti x=\bar x\in\bar\cR\\
0 & \mbox{ if } \ti x=x\in\XR
\ea
\right.\,.
$$
With obvious notation, we then have
\begin{equation}\label{media-arm}
\E_{\ti\nu_{\bar\cR}}[\ti\t_{\XR}]=
\frac{\ti\m(\ti V_\kappa)}{C_\kappa (\cR,\XR)}\,.
\end{equation}
Such kind of formula was introduced into the study
of metastability in~\cite{BEGK1}. We refer
to lecture notes~\cite{G} for a derivation.

Now
setting $\nu(x)=\ti\nu_{\bar\cR}(\bar x)$ for all $x\in\cR$, we can write
$$\E_{\ti\nu_{\bar\cR}}[\ti\t_{\XR}]=
\sfrac{1}{\ti \kappa} +\E_{\nu}[\t_{\XR}]+
\E_{\nu}[\t_{\XR}]\cdot \kappa\cdot\sfrac{1}{\ti \kappa}
=\sfrac{1}{\ti \kappa} +\E_{\nu}[\t_{\XR}](1+\sfrac{\kappa}{\ti \kappa})\,,
$$
where the first of the three summands
stands for the mean time to go from $\bar \cR$
to $\cR$, the second one for the mean time spent
when moving inside $\cR$ before reaching $\XR$,
and the last one for the mean time spent
moving back and forth between $\cR$ and $\bar\cR$.
From (\ref{tilde-mu}) we also have
$$
\frac{\ti\m(\ti V_\kappa)}{C_\kappa (\cR,\XR)}=
\frac{\m(V_\kappa)+\sum_{\bar x\in\bar\cR}\ti\m(\bar x)
}{C_\kappa (\cR,\XR)}
=\frac{\m(V_\kappa)+\frac{\kappa}{\ti \kappa}\m(\cR)}{C_\kappa (\cR,\XR)}\,.
$$
Inserting the above equalities in (\ref{media-arm})
and multiplying by $\ti \kappa$, we then get
\begin{equation}\label{media-arm2}
1+\E_{\nu}[\t_{\XR}](\ti \kappa+\kappa)
=\frac{\ti \kappa\m(V_\kappa)+\kappa\m(\cR)}{C_\kappa (\cR,\XR)}\,.
\end{equation}
Note that $\m(\cR),\m(V_\kappa)$, $C_\kappa (\cR,\XR)$
and $\E_{\nu}[\t_{\XR}]$ do not depend on $\ti \kappa$.
Then, in the limit of a vanishing $\ti \kappa$, it holds
\begin{equation}\label{picche}
1+\kappa\E_{\nu}[\t_{\XR}]
=\frac{\kappa\m(\cR)}{C_\kappa (\cR,\XR)}\,.
\end{equation}
This already provides, by (\ref{banana}), an upper bound on $\phi_\cR^*$.

To get the more practical upper bound stated in (\ref{phi&cap}),
let first note that dividing (\ref{media-arm2}) by $\ti \kappa$,
and then sending $\ti \kappa$ to $+\infty$,
we get
\begin{equation}\label{media-arm3}
\E_{\nu}[\t_{\XR}]
=\frac{\m(V_\kappa)}{C_\kappa (\cR,\XR)}\,.\end{equation}
Together with (\ref{picche}), this implies $
\frac{\m(V_\kappa)}{C_\kappa (\cR,\XR)}=
\frac{\m(\cR)}{C_\kappa (\cR,\XR)}-\frac{1}{\kappa}$
or equivalently
\begin{lemma}\label{lem:mediaV_k}
\begin{equation}\label{mediaV_k}
\m_\cR(V_\kappa)= 1-\frac{C_\kappa (\cR,\XR)}{\kappa\m(\cR)}\end{equation}
\end{lemma}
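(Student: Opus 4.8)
The plan is to eliminate the mean exit time $\E_\nu[\t_{\XR}]$ between the two identities just obtained. Substituting $\E_\nu[\t_{\XR}]=\m(V_\kappa)/C_\kappa(\cR,\XR)$ from~(\ref{media-arm3}) into the relation $1+\kappa\E_\nu[\t_{\XR}]=\kappa\,\m(\cR)/C_\kappa(\cR,\XR)$ of~(\ref{picche}) gives $1+\kappa\,\m(V_\kappa)/C_\kappa(\cR,\XR)=\kappa\,\m(\cR)/C_\kappa(\cR,\XR)$, that is, $C_\kappa(\cR,\XR)=\kappa\bigl(\m(\cR)-\m(V_\kappa)\bigr)$. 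Dividing both sides by $\kappa\,\m(\cR)$ then yields $C_\kappa(\cR,\XR)/(\kappa\,\m(\cR))=1-\m(V_\kappa)/\m(\cR)$.

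The only point left is to identify $\m(V_\kappa)/\m(\cR)$ with $\m_\cR(V_\kappa)$. This is immediate from the definitions: $\ti V_\kappa$, and hence $V_\kappa$, vanishes on $\XR$, so $\m(V_\kappa)=\sum_{x\in\cR}\m(x)V_\kappa(x)$, while $\m_\cR(x)=\m(x)/\m(\cR)$ for $x\in\cR$; therefore $\m_\cR(V_\kappa)=\m(V_\kappa)/\m(\cR)$, and inserting this into the previous line gives the asserted formula after rearranging.

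In this formulation there is essentially no obstacle left: the substance of the argument lies upstream, in the derivation of~(\ref{media-arm2}) through the auxiliary chain $\ti X$ and the harmonic-measure formula for $\E_{\ti\nu_{\bar\cR}}[\ti\t_{\XR}]$, together with the two passages to the limit $\ti\kappa\to0$ and $\ti\kappa\to+\infty$ that produce~(\ref{picche}) and~(\ref{media-arm3}) --- the latter limits being legitimate precisely because $\m(\cR)$, $\m(V_\kappa)$, $C_\kappa(\cR,\XR)$ and $\E_\nu[\t_{\XR}]$ do not depend on $\ti\kappa$. If one preferred a self-contained proof bypassing exit times altogether, the natural route would be the electrical-network reading of the capacity: $C_\kappa(\cR,\XR)$ is the total current leaving the source $\bar\cR$ (held at potential $1$, with $\XR$ grounded), which through the dangling edges $(x,\bar x)$ of conductance $\kappa\m(x)$ equals $\sum_{x\in\cR}\kappa\m(x)\bigl(1-V_\kappa(x)\bigr)=\kappa\bigl(\m(\cR)-\m(V_\kappa)\bigr)$; the one thing needing a word there is that the equilibrium potential restricted to $\cR$ is exactly $V_\kappa(x)=\P_x(\s_\kappa<\t_{\XR})$, which is recorded in the remarks following Definition~\ref{def:capacity}.
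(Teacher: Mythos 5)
Your argument is correct and is essentially the paper's own proof: the lemma is obtained there exactly by combining~(\ref{picche}) with~(\ref{media-arm3}) and using $\m_\cR(V_\kappa)=\m(V_\kappa)/\m(\cR)$ (legitimate since $V_\kappa$ vanishes off $\cR$). Your closing remark about reading the identity as conservation of current through the dangling edges is a valid alternative, but it is not needed and does not change the substance.
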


We now exploit the variational principle for $\phi_\cR^*$ provided by
Lemma \ref{lem:phiphi} and take $V_\kappa$ as test function.
Noting that  $\ti V_\kappa$ is the equilibrium potential
of the electrical network on $\tilde \cX$ defined in (\ref{conductance}),
from (\ref{capacity}) we get
\begin{equation}
\cD(V_\kappa (x)) \leq C_\kappa (\cR, \XR)\,.
\end{equation}
By the Jensen inequality and (\ref{mediaV_k}),
\bea
\|V_\kappa\|^2 &= & \m(\cR) \sum_{x\in\cR}
\m_\cR(x)\P_x(\s_\kappa<\t_\XR)^2\nonumber\\
&\geq& \m(\cR)\m_\cR (V_\kappa)^2  \nonumber\\
&=& \m(\cR) \left(1- \sfrac{C_\kappa (\cR, \XR)}{\kappa \m(\cR)}\right)^2\,.
\eea
Finally inserting these inequalities in the variational principle for $\phi_\cR^*$,
we get the stated upper bound.

\subsection{Proof of the upper bound in Theorem \ref{th:gap&cap}}
For any $f\in\ell^2(\m)$, we have
\begin{equation}\label{splitVar}
\begin{split}
\var_\m(f)&=
\m(\var_\m(f|\1_\cR)) + \var_\m(\m(f|\1_\cR))\\
&=\m(\cR)\var_{\m_\cR}(f_{|_\cR}) + \m(\XR)
\var_{\m_{\XR}}(f_{|_{\XR}})\\
&+
\m(\cR)\m(\XR)\left(\m_\cR(f_{|_{\cR}})-
\m_{\XR}(f_{|_{\XR}})\right)^2\,.
\end{split}
\end{equation}
Now, using the test function
$$\ti f= \frac{f-\m_{\XR}(f_{|_{\XR}})}{\m_{\cR}(f_{|_{\cR}})-\m_{\XR}(f_{|_{\XR}})}$$
in the definition (\ref{capacity}) of $(\kappa,\l)$-capacity, we get
\begin{equation}\nonumber
\begin{split}
&C_\kappa^\l(\cR,\XR)\leq \cD(\ti f)+\kappa\m(\cR)\var_{\m_\cR}(\ti f_{|_\cR})+
\l\m(\XR)\var_{\m_{\XR}}(\ti f_{|_{\XR}})\\
&=\left(\m_\cR(f_{|_{\cR}})-\m_{\XR}(f_{|_{\XR}})\right)^{-2}\!\!
\left(\cD(f)+\kappa\m(\cR)\var_{\m_\cR}(f_{|_\cR})+
\l\m(\XR)\var_{\m_{\XR}}(f_{|_{\XR}})\right)\,,
\end{split}
\end{equation}
which provides an upper bound on
$\left(\m_\cR(f_{|_{\cR}})-\m_{\XR}(f_{|_{\XR}})\right)^{2}$.
Applying that bound in Eq. (\ref{splitVar}), and from the definition
of $\phi_\kappa^\l(A,B)$, we get
\bea\label{fiori}
\var_\m(f)&\leq& \m(\cR)\var_{\m_\cR}(f_{|_\cR}) + \m(\XR)
\var_{\m_{\XR}}(f_{|_{\XR}})\nonumber\\
&+&\left(\cD(f)+\kappa\m(\cR)\var_{\m_\cR}(f_{|_\cR})+
\l\m(\XR)\var_{\m_{\XR}}(f_{|_{\XR}})\right){\phi_\kappa^\l(\cR,\XR)}^{-1}\\
&\leq&
\sfrac{\cD(f)}{\phi_\kappa^\l(\cR,\XR)}+
\sfrac{\m(\cR)\cD_\cR(f_{|_\cR})}{\g_\cR}
\left(1+\sfrac{\kappa}{\phi_\kappa^\l(\cR,\XR)}\right)
+\sfrac{\m(\XR)\cD_{\XR}(f_{|_{\XR}})}{\g_{\XR}}
\left(1+\sfrac{\l}{\phi_\kappa^\l(\cR,\XR)}\right)\nonumber\\
&\leq&
\sfrac{\cD(f)}{\phi_\kappa^\l(\cR,\XR)}
\left\{1 + \max\left(\sfrac{\phi_\kappa^\l(\cR,\XR)+\kappa}{\g_\cR};
\sfrac{\phi_\kappa^\l(\cR,\XR)+\l}{\g_{\XR}}\right)\right\}\,,\nonumber
\eea
where in the last step we used that
$$\cD(f)\leq \m(\cR)\cD_\cR(f_{|_\cR})+\m(\XR)\cD_{\XR}(f_{|_{\XR}})\,.
$$
The upper bound in (\ref{gap&cap}) follows directly.

\subsection{Proof of the lower bound of Theorem \ref{th:phi&cap}}
From inequality (\ref{fiori}) applied to $f=h_\cR^*$ and  $\l=+\infty$,
and since ${h_\cR^*}_{|_{\XR}}=0$, we get
\begin{equation}\label{varH}
\var_{\m}(h_\cR^*)\leq
\m(\cR)\var_{\m_\cR}(h_\cR^*)+\frac{\cD(h_\cR^*)}{C_\kappa(\cR,\XR)}\m(\cR)(1-\m(\cR))
\left\{1+\sfrac{\kappa}{\g_\cR}\right\}\,.
\end{equation}
On the other hand, by (\ref{splitVar}),
$$
\var_{\m}(h_\cR^*)=
\m(\cR)\var_{\m_\cR}(h_\cR^*)+ \m(\cR)(1-\m(\cR))\,.
$$
Inserting this formula in (\ref{varH}), the term  $\m(\cR)\var_{\m_\cR}(h_\cR^*)$
becomes zero and then,  dividing by $\m(\cR)(1-\m(\cR))$, we have
\begin{equation}
1\leq
\frac{\cD(h_\cR^*)}{C_\kappa(\cR,\XR)}
\left\{1+\sfrac{\kappa}{\g_\cR}\right\}\,,
\end{equation}
or equivalently
\begin{equation}
C_\kappa(\cR,\XR)\left\{1+\sfrac{\kappa}{\g_\cR}\right\}^{-1}\leq\cD(h_\cR^*)
\,.
\end{equation}
Now, dividing by $\m(\cR)\|h_\cR^*\|_\cR^2$ and using that,
by Prop. \ref{prop:varH},
$\|h_\cR^*\|_\cR^2=\var_{\m_\cR}(h_\cR^*)+1\leq 1/(1-\e_\cR^*)$,
 we get
\begin{equation}
\frac{C_\kappa(\cR,\XR)}{\m(\cR)}\left\{\frac{1-\e_\cR^*}{1+\sfrac{\kappa}{\g_\cR}}\right\}
\leq\frac{C_\kappa(\cR,\XR)}{\m(\cR)\|h_\cR^*\|_\cR^2}\left\{1+\sfrac{\kappa}{\g_\cR}\right\}^{-1}
\leq\frac{\cD(h_\cR^*)}{\|h_\cR^*\|_\cR^2\m(\cR)}=\phi_\cR^*\,,
\end{equation}
where the last equality comes from Lemma \ref{lem:Dirichlet}
and the fact that
$$\langle h_\cR^*,-\gL h_\cR^*\rangle_{\cR}= \phi_\cR^*\,. $$
Finally, using the convexity of the function $x\mapsto\sfrac{1}{1+x}$, we obtain
\begin{equation}
\frac{C_\kappa(\cR,\XR)}{\m(\cR)}\left\{1-\e_\cR^*-\sfrac{\kappa}{\g_\cR}\right\}\leq
\phi_\cR^*\,,
\end{equation}
which concludes the proof of the lower bound in (\ref{phi&cap}).

\subsection{Proof of the lower bound in Theorem \ref{th:gap&cap}}
We use the test function $V_\kappa^\l$, for which we know that (see (\ref{capacity}))
$$\cD(V_\kappa^\l)+\kappa\m(\cR)\E_{\m_\cR}\left[\left({V_\kappa^\l}_{|_\cR}-1\right)^2\right]
+\l\m(\XR)\E_{\m_{\XR}}
\left[\left({V_\kappa^\l}_{|_{\XR}}-0\right)^2\right]=C_\kappa^\l(\cR,\XR)
$$
so that
\begin{equation}\label{gap&cap-low1}
\cD(V_\kappa^\l)\leq C_\kappa^\l(\cR,\XR)
\,.
\end{equation}
We then look for a lower bound on $\var_\m(V_\kappa^\l)$.
From (\ref{splitVar}) we have
$$\var_\m(V_\kappa^\l)\geq \m(\cR)\m(\XR)\left(\m_\cR
({V_\kappa^\l}_{|_\cR})-\m_{\XR}({V_\kappa^\l}_{|_{\XR}})\right)^2\,,$$
thus we need to estimate $\m_\cR({V_\kappa^\l}_{|_\cR})$
and $\m_{\XR}({V_\kappa^\l}_{|_{\XR}})$.

By the monotonicity in $\l$, for all $x\in\cR$ we get
$$V_\kappa^\l(x)=\P_x(\ell^{-1}_\cR(\s_\kappa))<\ell^{-1}_{\XR}(\s_\l))
\geq \P_x(\s_\kappa<\t_{\XR})=V_\kappa(x)\,,$$
which implies, together with Lemma \ref{lem:mediaV_k},
$$
\m_\cR({V_\kappa^\l}_{|_{\cR}})\geq 1-\sfrac{C_\kappa(\cR,\XR)}{\kappa\m(\cR)}\,.
$$
In the same way we have
$$
\m_{\XR}({V_\kappa^\l}_{|_{\XR}})\leq \sfrac{C^\l(\cR,\XR)}{\l\m(\XR)}\,.
$$
Altogether, we finally get
\begin{equation}
\g\leq \sfrac{\cD(V_\kappa^\l)}{\var_\m(V_\kappa^\l)}\leq
\sfrac{C_\kappa^\l(\cR,\XR)}{\m(\cR)\m(\XR)}
\left(1-\sfrac{C_\kappa(\cR,\XR)}{\kappa\m(\cR)}-
\sfrac{C^\l(\cR,\XR)}{\l\m(\XR)}\right)^{-2}\,.
\end{equation}

\section{Working with soft measures}\label{soft_p}
\subsection{Proof of Lemma~\ref{lem:softmeasure}}\label{soft_p_1}
If $\l=0$ the first statement holds trivially since,  in that
case, $\phi_{\cR,\l}^*=0=\m_{\cR,\l}^*(e_{\cR,\l})$.
If $\l>0$, we can write
$$
\P_{\m_{\cR,\l}^*}(\t_{\XR,\l}\leq t)=\sum_{k\geq 1}
\P_{\m_{\cR,\l}^*}(N_{\cR}(t)\geq k)(1-\phi_{\cR,\l}^*)^{k-1}
\m_{\cR,\l}^*(e_{\cR,\l})\,,
$$
where $N_\cR(t)$ is the number of clock rings inside $\cR$
for the Poissonian clock associated to $X$.
Taking the limit as $t\to\infty$ in the above equation,
we get that
$$1=\m_{\cR,\l}^*(e_{\cR,\l})/\phi_{\cR,\l}^* \,,$$
which provides identity i).\\
Let us now define the operator $\lL$ on $\ell^2(\m_\cR)$ as
\begin{equation}\label{def:generator2}
(\lL f)(x)= -f(x)+\sum_{y\in\cR}p_{\cR,\l}^*(x,y)f(y)
\qquad \forall x\in\cR\,, f\in\ell^2(\m_\cR)
\end{equation}
and notice that, for any probability measure $\nu$ on $\cX$, it holds
\begin{equation}\label{media&semigrupppo}
\E_\nu\left[f(X\circ \ell^{-1}_\cR(t))\1_{\{\t_{\XR,\l}>t\}}\right]
=\nu\left(e^{t\lL}f\right)\,.
\end{equation}
 The exponential law given in ii) follows from the above
 identity applied to $\nu=\mu_{\cR,\l}^*$ and $f=\1_\cR$.

Finally, since $1-\phi_{\cR,\l}^*$ is a simple eigenvalue
equal to the spectral radius of $p_{\cR,\l}^*$, for any $x,y\in\cR$
and in the large $t$ regime,  we have
 \begin{equation}\label{quasi-stat2}
\P_x(X\circ \ell^{-1}_\cR(t)=y\,,\,\t_{\XR,\l}>t)\sim c_x \m_{\cR,\l}^*(y)e^{-t\phi_{\cR,\l}^*}\,,
 \end{equation}
where $c_x\m_{\cR,\l}^*$ is the canonical projection of $\d_x$
on the one-dimensional eigenspace associated with $\m_{\cR,\l}^*$
($c_x$ is strictly positive as a consequence of the positivity of $\mu_{\cR, \lambda}^*$).
From (\ref{quasi-stat2}), and taking the limit when $t$ goes to infinity, it follows
$$
\lim_{t\to\infty}\P_x(X\circ \ell^{-1}_\cR(t)=y\,|\,\t_{\XR,\l}>t)=\m_{\cR,\l}^*(y)
\,.$$

\subsection{Proof of Lemma~\ref{interpolation}}\label{soft_p_2}
The result is once again a consequence of the Perron-Frobenius
theorem. Let $\chi_\l(y)$ denote  the characteristic polynomial
of $\lL$, which can be written as $\chi_\l(y)=(y+\phi_{\cR,\l}^*)a(y)$.
If $a(y)= (y+\phi_{\cR,\l}^*)q(y)+a(-\phi_{\cR, \lambda}^*)$ is the Euclidian division
of $a(y)$ by $(y-\phi_{\cR,\l}^*)$,
we have the B\'{e}zout identity
\begin{equation}
\sfrac{1}{a(-\phi_{\cR, \lambda}^*)} a(y)-\sfrac{1}{a(-\phi_{\cR, \lambda}^*)} q(y)(y+\phi_{\cR,\l}^*)=1\,.
\end{equation}
In particular, for any $x\in\cR$,
$\sfrac{1}{a(-\phi_{\cR, \lambda}^*)}\d_x a(\lL)= c_x \m_{\cR,\l}^*$
is the canonical projection of $\d_x$ on the eigenspace
associated to $\m_{\cR,\l}^*$, and since $c_x>0$ as previously noticed,
we  have
 \begin{equation}
\m_{\cR,\l}^*= \frac{\d_x a(\lL)}{\sum_{y\in\cR}\d_x a(\lL)\1_{\{y\}}}\,.
 \end{equation}
Since $a(y)=\frac{\chi_\l(y)}{(y+\phi_{\cR,\l}^*)}$, the above equation
expresses the map $\l\mapsto\m_{\cR,\l}^*$ as a composition of continuous functions of $\l$.

\subsection{Proof of Proposition~\ref{prop:contin-gap}}\label{soft_p_3}
As far as $\phi_{\cR,\l}$ is concerned, continuity and monotonicity
follow from continuity and monotonicity of $e_{\cR,\l}(x)$ for any $x\in\cR$.
We then consider the other parameters.
The continuity follows from the continuity of the eigenvalues
as root of the characteristic polynomial.
To prove the monotonicity, we notice that
 when $\l$ decreases to zero,
$p_{\cR,\lambda}^*(x,y)$ grows for all $x$ and $y$ in $\cR$
as well as $c_{\cR,\l}(x,y)$ for any distinct $x,y\in\cR$.
From the variational characterization of $\phi_{\cR,\l}^*$, i.e.
\begin{eqnarray}
\phi_{\cR,\l}^* & = & \min\left\{\langle f,-\lL f \rangle_\cR
\,:\,
\langle f,f \rangle_\cR=1\,,\,f>0\right\}\\
& = &\min_{
  \stackrel%
  {{\scriptstyle \langle f,f \rangle_\cR=1}}%
  {{\scriptstyle f>0}}%
}
  \sum_{x,y\in\cR}\mu_\cR(x)f(x)\left(f(x)-\sum_{y\in\cR}p_{\cR,\lambda}^*(x,y)f(y)\right)
\label{phi-lambda}
\end{eqnarray}
where the restriction $f>0$ comes from the fact that, by the Perron-Frobenius theorem,
the right eigenvector has positive coordinates,
we see that $\phi_{\cR,\l}^*$ is decreasing in $\l$.
Similarly, using
\begin{equation}\label{gap-lambda}
\g_{\cR,\l}=\min\left\{\sfrac{1}{2}\sum_{x,y\in\cR}c_{\cR,\l}(x,y)
(f(x)-f(y))^2\,:\, \var_{\m_\cR}(f)=1\right\}\,,
\end{equation}
we see that $\g_{\cR,\l}$ is increasing in $\l$.
As a consequence $\e_{\cR,\l}^*$ is decreasing in $\l$, and we have
\begin{equation}
\e_{\cR,0}^*=\frac{\phi_{\cR,0}^*}{\g_{\cR,0}}=
\frac{\m_{\cR,0}^*(e_{\cR,0})}{\g_{\cR,0}}=0\,.
\end{equation}

\subsection{Proof of Theorem~\ref{th:t6}}\label{soft_p_4}
Proof of (\ref{t6prima}):  We first write
\bea
\P_\nu(X(\t_\d)=x\,|\,X(\t_\d)\in\cR)
&=&\frac{1}{\P_{\nu}(X(\t_\d)\in\cR)}
\sum_{i\geq 0}\sum_{x_i\in\cX}
\P_\nu(i_0>i, X(\t_i)=x_i)\\
&\times&\P_{x_i}(X\circ \ell^{-1}_{\cR}(\s_\kappa)=x,
\,\ell^{-1}_\cR(\s_\kappa)<\ell^{-1}_{\XR}(\s_\l)\,,\,\s_\kappa> T_{\d,\cR,\l}^*)
\nonumber
\eea
Now, conditioning on $\s_\kappa$ and setting $\P_{x_i}^{\s_\kappa}=\P_{x_i}(\cdot\,|\,\s_\kappa)$,
we get
\begin{equation}\label{pippo}
\begin{split}
\P_\nu(X(\t_\d)&=x\,|\,X(\t_\d)\in\cR)
=\frac{1}{\P_{\nu}(X(\t_\d)\in\cR)}
\sum_{i\geq 0}\sum_{x_i\in\cX}
\P_\nu(i_0>i, X(\t_i)=x_i)\\
&\times\E\left[\P_{x_i}^{\s_\kappa}(X\circ \ell^{-1}_{\cR}(\s_\kappa)=x,
\,\ell^{-1}_\cR(\s_\kappa)<\ell^{-1}_{\XR}(\s_\l)\,,\,\s_\kappa> T_{\d,\cR,\l}^*)\right]\\
&=\frac{1}{\P_{\nu}(X(\t_\d)\in\cR)}
\sum_{i\geq 0}\sum_{x_i\in\cX}
\P_\nu(i_0>i, X(\t_i)=x_i)\\
&\times\E\left[\1_{\{\s_\kappa> T_{\d,\cR,\l}^*\}}\P_{x_i}^{\s_\kappa}(X\circ \ell^{-1}_{\cR}(\s_\kappa)=x\,|\,
\s_\kappa<\t_{\XR,\l})\P_{x_i}^{\s_\kappa}(\s_\kappa<\t_{\XR,\l})\right]\,,
\end{split}
\end{equation}
where the second equality comes from the independence  between $X$, $\s_\kappa$
and $\s_\l$.
Since
\begin{equation}
\P_{\nu}(X(\t_\d)\in\cR)=
\sum_{i\geq 0}\sum_{x_i\in\cX}
\P_\nu(i_0>i, X(\t_i)=x_i)
\E\left[\1_{\{\s_\kappa> T_{\d,\cR,\l}^*\}}\P_{x_i}^{\s_\kappa}(\s_\kappa<\t_{\XR,\l})
\right]\,,
\end{equation}
from (\ref{pippo}) we get
\begin{equation}
\begin{split}
&\frac{\P_\nu(X(\t_\d)=x\,|\,X(\t_\d)\in\cR)}{\m_{\cR,\l}^*(x)}-1
= \frac{1}{\P_{\nu}(X(\t_\d)\in\cR)}
\sum_{i\geq 0}\sum_{x_i\in\cX}
\P_\nu(i_0>i, X(\t_i)=x_i)\\
&\times\E\left[\1_{\{\s_\kappa> T_{\d,\cR,\l}^*\}}
\P_{x_i}^{\s_\kappa}(\s_\kappa<\t_{\XR,\l})
\left(\frac{\P_{x_i}^{\s_\kappa}(X\circ \ell^{-1}_{\cR}(\s_\kappa)=x\,|\,\s_\kappa<\t_{\XR,\l})}
{\m_{\cR,\l}^*(x)}-1\right)\right]\,.
\end{split}
\end{equation}
An analogous expression can be found for
$\frac{\P_\nu(X(\t_\d)=x\,|\,X(\t_\d)\in\XR)}{\m_{\XR,\kappa}^*(x)}-1$.
The result then follows from Theorem~\ref{th:generalization},
and in particular from the equivalent of Theorem~\ref{th:dinamica}.

To prove inequality (\ref{t6seconda}), we first state the following lemma.
\begin{lemma}\label{lem:exponential}
Let $T>0$ and $\{\s_i:\,i\geq 1\}$ be a sequence of independent
exponential random variables of rate $\kappa$ such that $e^{\kappa T}-1<1$.
If $N=\min\{i\geq 1\,:\,\s_i>T\}$, then
\begin{equation}\label{exponential}
\P\left(\sum_{i=1}^N \s_i>\frac{t}{\kappa}\right)\leq
\frac{e^{-t}}{1-(e^{\kappa T}-1)}
\end{equation}
\end{lemma}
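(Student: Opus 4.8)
The plan is to turn $\sum_{i=1}^N\sigma_i$ into a geometrically-weighted sum of elementary terms and to estimate it term by term, with no need for any Chernoff-type argument. First I would observe that, since $\P(\sigma_i>T)=e^{-\kappa T}$ and the $\sigma_i$ are independent, $N$ is geometrically distributed: $\P(N=n)=(1-e^{-\kappa T})^{n-1}e^{-\kappa T}$ for $n\geq1$. Next, conditioning on $\{N=n\}$, each of $\sigma_1,\dots,\sigma_{n-1}$ is at most $T$, so $\sum_{i=1}^{n-1}\sigma_i\leq(n-1)T$, while by the lack of memory of the exponential law $\sigma_n=T+\sigma_n'$ with $\sigma_n'$ an exponential of rate $\kappa$ independent of the event $\{N=n\}$. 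Hence, on $\{N=n\}$,
\[
\sum_{i=1}^N\sigma_i\ \leq\ nT+\sigma_n'\,.
\]

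The second ingredient is the elementary bound, valid for every $n\geq1$ and $t\geq0$,
\[
\P\big(nT+\sigma_n'>\sfrac{t}{\kappa}\big)\ \leq\ e^{-t+n\kappa T}\,,
\]
which holds with equality $\P(\sigma_n'>\sfrac{t}{\kappa}-nT)=e^{-\kappa(t/\kappa-nT)}$ when $t/\kappa-nT>0$, and is trivial when $t/\kappa-nT\leq0$ since then the left-hand side equals $1$ while the exponent is nonnegative. Combining the two ingredients and summing over $n$, using the identity $e^{-\kappa T}e^{n\kappa T}(1-e^{-\kappa T})^{n-1}=\big((1-e^{-\kappa T})e^{\kappa T}\big)^{n-1}=(e^{\kappa T}-1)^{n-1}$, I would get
\[
\P\Big(\sum_{i=1}^N\sigma_i>\sfrac{t}{\kappa}\Big)\ \leq\ \sum_{n\geq1}(1-e^{-\kappa T})^{n-1}e^{-\kappa T}\,e^{-t+n\kappa T}\ =\ e^{-t}\sum_{n\geq1}(e^{\kappa T}-1)^{n-1}\,,
\]
and the hypothesis $e^{\kappa T}-1<1$ makes this geometric series sum to $1/(1-(e^{\kappa T}-1))$, which is exactly the claimed bound.

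The only point requiring a little care is the independence bookkeeping in the decomposition $\sigma_n=T+\sigma_n'$, namely that $\sigma_n'$ is an exponential of rate $\kappa$ that is independent of $\{N=n\}$; this follows at once from the memorylessness of $\sigma_n$ conditioned on $\{\sigma_n>T\}$ together with the independence of $\sigma_1,\dots,\sigma_{n-1}$ from $\sigma_n$. Everything else is a one-line computation, so I do not expect any genuine obstacle here.
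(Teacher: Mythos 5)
Your proposal is correct and follows essentially the same route as the paper: both condition on $\{N=n\}$, bound the first $n-1$ variables by $T$ each, use memorylessness of $\sigma_n$ given $\{\sigma_n>T\}$ to reduce to $\P(\sigma_n>\sfrac{t}{\kappa}-nT)\leq e^{-t+n\kappa T}$, and sum the resulting geometric series $(e^{\kappa T}-1)^{n-1}$. Your explicit handling of the case $\sfrac{t}{\kappa}-nT\leq 0$ is a minor point of extra care, but the argument is the same.
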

\begin{proof}[Proof of Lemma \ref{lem:exponential}]
Using the property of the exponential distribution, we have
\begin{equation}
\begin{split}
\P\left(\sum_{i=1}^N \s_i>\frac{t}{\kappa}\right)
&= \sum_{n\geq 1}\P(N=n)\P\left(\sum_{j=1}^n \s_i>\frac{t}{\kappa}\,
\big{|}\,\s_1<T,\ldots,\s_{n-1}<T, \s_n>T\right)\\
&\leq\sum_{n\geq 1}\P(N=n)\P\left(\s_n>\frac{t}{\kappa}-(n-1)T\,
\big{|}\s_n>T\right)\\
&=\sum_{n\geq 1}\P(N=n)\P\left(\s_n>\frac{t}{\kappa}-nT\right)\\
&=\sum_{n\geq 1} (1-e^{-\kappa T})^{n-1}e^{-\kappa T}e^{-t+n\kappa T}\\
&=e^{-t} \sum_{n\geq 1} (e^{\kappa T}-1)^{n-1}=
\frac{e^{-t}}{1-(e^{\kappa T}-1)}\,,
\end{split}
\end{equation}
which concludes the proof.
\end{proof}

Coming back to the proof of Th. \ref{th:t6}, we first notice
that if $\t_\d>t(\frac{1}{\kappa}+\frac{1}{\l})$, then
$\ell_{\cR}(\t_\d)>\frac{t}{\kappa}$ or $\ell_{\XR}(\t_\d)>\frac{t}{\l}$.
As a consequence, defining
\begin{equation}
\ba{l}
A_\cR=\{\kappa \ell_\cR(\t_\d)\vee \l \ell_{\XR}(\t_\d)=\kappa \ell_\cR(\t_\d)>t \}\\
A_{\XR}=\{\kappa \ell_\cR(\t_\d)\vee \l \ell_{\XR}(\t_\d)=\l \ell_{\XR}(\t_\d)>t \}
\ea
\end{equation}
so that $\P(A_\cR)+\P(A_{\XR})\leq 1$, we have
$$
\P_\nu(\t_\d>t(\sfrac{1}{\kappa}+\sfrac{1}{\l}))=
\P_\nu(\t_\d>t(\sfrac{1}{\kappa}+\sfrac{1}{\l})|\,A_\cR) \P_\nu(A_\cR)+
\P_\nu(\t_\d>t(\sfrac{1}{\kappa}+\sfrac{1}{\l})|A_{\XR})\P_\nu(A_{\XR})\,.
$$
Using the independence between $\s_\kappa$, $\s_\l$ and $X$,
 together with the previous lemma, we finally get
 $$
\P_\nu\left(\t_\d>t\left(\sfrac{1}{\kappa} +\sfrac{1}{\l}\right) \right)
\leq e^{-t}\left\{\sfrac{1}{1-\xi} \right\}
\left(\P_\nu(A_\cR)+\P_\nu(A_{\XR})\right)
\leq e^{-t}\left\{\sfrac{1}{1-\xi} \right\}\,.
  $$

\subsection{Proof of Theorem~\ref{th:t8}}\label{soft_p_5}
To prove the upper bound
we consider the extended electrical network associated
with $C_\kappa^\l(A,B)$
and follow the first steps of the proof of the upper bound
in Theorem \ref{th:phi&cap} (see subsection~\ref{sole}).
We then reach, for some probability measure~$\nu$ on $\mathcal{R}$,
to
\begin{equation}
	1+\kappa\E_{\nu}[\t_{\XR, \lambda}]
	=\frac{\kappa\m(\cR)}{C_\kappa^\lambda(\cR,\XR)}
	\,.
\end{equation}
instead of equation~(\ref{picche}).
Using then, from Theorem~\ref{th:t7},
the analogous of equation~(\ref{banana}),
we obtain
\begin{equation}
	1 + \frac{\kappa}{\phi^*_{\mathcal{R}, \lambda}}\left\{
		1 + \varepsilon^*_{\mathcal{R}, \lambda}
		+ \varepsilon^*_{\mathcal{R}, \lambda} \ln \frac{
			1
		}{
			\varepsilon^*_{\mathcal{R}, \lambda} \zeta_{\mathcal{R}}
		}
	\right\}
	\geq \frac{
		\kappa \mu(\mathcal{R})
	}{
		C_\kappa^\lambda(\mathcal{R}, \mathcal{X}\setminus\mathcal{R})
	}
	\,,
\end{equation}
or
\begin{equation}
	\frac{1}{\phi^*_{\mathcal{R}, \lambda}}\left\{
		1 + \varepsilon^*_{\mathcal{R}, \lambda}
		+ \varepsilon^*_{\mathcal{R}, \lambda} \ln \frac{
			1
		}{
			\varepsilon^*_{\mathcal{R}, \lambda} \zeta_{\mathcal{R}}
		}
		+ \frac{
			\phi^*_{\mathcal{R}, \lambda}
		}{
			\kappa
		}
	\right\}
	\geq \frac{
		\mu(\mathcal{R})
	}{
		C_\kappa^\lambda(\mathcal{R}, \mathcal{X}\setminus\mathcal{R})
	}
	\,,
\end{equation}
which gives the desired upper bound on $\phi^*_{\mathcal{R}, \lambda}$.

The proof of the lower bound will be similar to
the proof of  the lower bound of Theorem \ref{th:phi&cap},
where we used a partial Poincar\'{e} inequality
to control the mean exit time from $\cR$.
The difference here is that we will have to work
on the whole space $\cX$ and not only on $\cR$.
Since $\m_{\cR,\l}^*$ is concentrated on $\cR$,
we will first compare its associated exit time
$\tau_{\mathcal{X}\setminus\mathcal{R}, \lambda}$
with the exit time of another quasi-stationary measure,
$\ti \mu_{\cX}^*$, that spreads on the whole space $\cX$.
Then we will control $\tilde\phi_\cX^*$,
the escape rate from $\cX$,
with the spectral gap
estimated in Theorem \ref{th:gap&cap}.

Let $\ti\mu_\cX^*$ be the quasi-stationary measure on $\cX$ associated
with the Markovian process $\ti X$ on $\ti\cX= \cX\cup \overline{\XR}$ with generator
$\ti\cL$ defined, for some $\ti\l >0$, by
\begin{equation}\label{gen-tilde}
(\ti\cL f)(\ti x) =
\left\{
\ba{ll}
(\cL f)(x) & \mbox{if } \ti x=x\in\cR\\
(\cL f)(x)+\l(f(\bar x)-f(x)) & \mbox{if } \ti x=x\in\XR\\
\tilde\l(f(\ti x)-f(x)) & \mbox{if } \ti x=\bar x\in\overline{\XR}
\ea
\right.\,.
\end{equation}
The associated escape rate $\tilde\phi^*_{\mathcal{X}}$
is, with obvious notation and for any probability measure $\nu$ on $\mathcal{X}$,
the rate of exponential decay of
$P_\nu(\tilde\tau_{\overline{\mathcal{X}\setminus\mathcal{R}}} > t)$
when $t$ goes to infinity.
Since
\begin{equation}
	\begin{split}
		P_{\mu_{\mathcal{R}, \lambda}^*}\left(
			\tilde\tau_{\overline{\mathcal{X}\setminus\mathcal{R}}} > t
		\right)
		& \geq P_{\mu_{\mathcal{R}, \lambda}^*}\left(
			\ell_{\mathcal{R}}\left(
				\tilde\tau_{\overline{\mathcal{X}\setminus\mathcal{R}}}
			\right) > t
		\right)\\
		& = P_{\mu_{\mathcal{R}, \lambda}^*}\left(
			\tau_{\mathcal{X}\setminus\mathcal{R}, \lambda} > t
		\right) = e^{-\phi^*_{\mathcal{R}, \lambda} t}
		\,,
	\end{split}
\end{equation}
we have $\phi^*_{\mathcal{R}, \lambda} \geq \tilde\phi^*_{\mathcal{X}}$.

We then have to estimate $\ti\phi_{\cX}^*$ and we do so
by comparison with the spectral gap.
By Lemma \ref{lem:Dirichlet} applied with $\cR=\cX$ and the correct normalizations,
and taking, with obvious notation, $f =\ti h_{\cX}^*$,
which is indeed the minimizer in the variational principle satisfied by $\tilde\phi_{\cX}^*$,
we have
\begin{equation}\label{erbetta}
	\ti\phi_\cX^*\geq\frac{\cD(\ti h_\cX^*)}{\|\ti h_\cX^*\|^2}
	=\frac{\|\ti h_\cX^*\|^2-1}{\|\ti h_\cX^*\|^2}
	\frac{\cD(\ti h_\cX^*)}{\var_\m(\ti h_\cX^*)}
	\geq \left(1- \frac{1}{\|\ti h_\cX^*\|^2}\right)\g.
\end{equation}
Now,
\begin{equation}\label{pietruccia}
	\begin{split}
		\|\tilde h_\cX^*\|^2
		& \geq
		\sum_{x\in\cR}\mu(x)\left(\frac{\tilde \mu_\cX^*(x)}{\mu(x)}\right)^2
		= \mu(\cR)\sum_{x\in\cR}\mu_\cR(x)\left(\frac{\tilde \mu_\cX^*(x)}{\mu(x)}\right)^2\\
		& \geq
		\mu(\cR)\left(\sum_{x\in\cR}\mu_\cR(x)\frac{\tilde \mu_\cX^*(x)}{\mu(x)}\right)^2
		= \frac{1}{\mu(\cR)} \left(\tilde \mu_\cX^*(\cR)\right)^2
		\,.
	\end{split}
\end{equation}
Since the escape from $\mathcal{X}$ occurs at rate $\lambda$
in each point of $\mathcal{X} \setminus \mathcal{R}$
and there are no direct connections between $\mathcal{R}$
and $\overline{\mathcal{X} \setminus \mathcal{R}}$,
one has
\begin{equation}
	\tilde\mu^*_{\mathcal{X}}(\mathcal{X} \setminus \mathcal{R}) \cdot \lambda
	= \tilde\phi^*_{\mathcal{X}} \leq \phi^*_{\mathcal{R}, \lambda}
\end{equation}
or
\begin{equation}
	\tilde\mu^*_{\mathcal{X}}(\mathcal{R})
	\geq \left\{
		1 - \frac{\phi^*_{\mathcal{R}, \lambda}}{\lambda}
	\right\}
	\,.
\end{equation}
From $\phi^*_{\mathcal{R}, \lambda} \geq \tilde\phi^*_{\mathcal{X}}$,
(\ref{erbetta}), Theorem~\ref{th:gap&cap} and~(\ref{pietruccia})
we obtain
\begin{equation}
	\phi^*_{\mathcal{R}, \lambda}
	\geq \left(
		1 - \frac{\mu(\cR)}{
			\left\{
				1 - \frac{\phi^*_{\mathcal{R}, \lambda}}{\lambda}
			\right\}^2
		}
	\right)
	\frac{C_\kappa^\lambda(\cR,\XR)}{\mu(\cR)(1-\mu(\cR))}
	\left\{
		\frac{1}{
			1 + \max\left(
				\frac{\kappa + \phi_\kappa^\lambda}{\gamma_\cR},
				\frac{\lambda + \phi_\kappa^\lambda}{\gamma_{\XR}}
			\right)
		}
	\right\}
	\,.
\end{equation}
Developing the square, dropping a few terms
and using the convexity of $x \mapsto 1/(1 + x)$,
this implies
\begin{equation}
	\phi^*_{\mathcal{R}, \lambda}
	\geq
	\frac{C_\kappa^\lambda(\mathcal{R}, \mathcal{X} \setminus \mathcal{R})}{\mu(\mathcal{R})}
	\left\{
		\frac{1 - \mu(\mathcal{R}) - 2 \phi^*_{\mathcal{R}, \lambda} / \lambda}{1 - \mu(\mathcal{R})}
	\right\}\left\{
		1 - \max\left(
			\frac{\kappa + \phi_\kappa^\lambda}{\gamma_{\mathcal{R}}},
			\frac{\lambda+ \phi_\kappa^\lambda}{\gamma_{\mathcal{X} \setminus \mathcal{R}}},
		\right)
	\right\}
	\,,
\end{equation}		
which is the desired result.

\subsection{Proof of Theorem~\ref{mix}}\label{soft_p_6}
By Theorem~\ref{th:generalization}, for all $x$ in $\cX$,
\begin{equation}
\max_{y\in\cR}\left|
  \frac{
    \P_x\left(
      X(\cT) = y \Big
      | \sigma_\lambda > T_{\XR,\kappa}^*
    \right)
  }{
    \mu^*_{\XR, \kappa}(y)
  }
  -1
\right|
\leq \varepsilon_{\XR,\kappa}^*.
\end{equation}
Then
\begin{equation}
	\begin{split}
		\|\nu_x-\mu^*_{\XR, \kappa}\|_{\mbox{{\tiny TV}}}
		& \leq \frac{1}{2}\varepsilon_{\XR,\kappa}^* + \P\left(\sigma_\lambda < T_{\XR,\kappa}^*\right)\\
		& = \frac{1}{2}\varepsilon_{\XR, \kappa}^* + 1 - e^{-\lambda T_{\XR, \kappa}^*}\\
		& \leq \frac{1}{2}\varepsilon_{\XR, \kappa}^* + \lambda T_{\XR, \kappa}^*
		\,.
	\end{split}
\end{equation}
Also
\begin{eqnarray}
	\|\mu^*_{\mathcal{X} \setminus \mathcal{R}, \kappa} - \mu_{\mathcal{X} \setminus \mathcal{R}}\|_{\rm TV}
	& \leq & \frac{1}{2}\sqrt{
		\frac{
			\varepsilon^*_{\mathcal{X} \setminus \mathcal{R}}
		}{
			1 - \varepsilon^*_{\mathcal{X} \setminus \mathcal{R}}
		}
	}
	\,,\\
	\|\mu_{\XR} - \mu\|_{\rm TV}
	& = & \mu(\cR)
	\,,
\end{eqnarray}
and the upper on $\|\nu_x - \mu\|_{TV}$
follows from the triangular inequality.

Now, for all $t >0$,
\begin{equation}
\|\P_x(X(t) = \cdot) - \mu\|_{\mbox{{\tiny TV}}} \leq \|\nu_x - \mu\|_{\mbox{{\tiny TV}}} + \P_x(\cT > t)
\end{equation}
and to prove our mixing time estimate, it is sufficient to show
\begin{equation}\label{bruxelles}
\P_x\left(\cT > t\right)
\leq \frac{1}{2}\left(\frac{1}{2} + \eta\right) - \mu(\cR) - \sqrt{
	\frac{
		\varepsilon^*_{\mathcal{X} \setminus \mathcal{R}}
	}{
		1 - \varepsilon^*_{\mathcal{X} \setminus \mathcal{R}}
	}
}
- \lambda T_{\XR,0}^*
= \frac{1}{4} - \frac{\mu(\mathcal{R})}{2}
\end{equation}
for
\begin{equation}\label{semaforo}
	t \geq \frac{2}{
	\left(
		\frac{1}{2} - \mu(\mathcal{R})
	\right) \phi_{\cR,\lambda}^*
}
\left\{
	1
	+ \varepsilon^*_{\mathcal{R}, \lambda}
	+ \varepsilon^*_{\mathcal{R}, \lambda} \ln \frac{1}{\varepsilon^*_{\mathcal{R}, \lambda} \zeta_R}
	+ \frac{\phi^*_{\mathcal{R}, \lambda}}{\lambda}
\right\}
\,.
\end{equation}

To obtain such an estimate
we give an upper bound on the mean value
of $\cT$ and use Markov inequality.
With $\cT' = \sigma_\lambda\wedge\tau_\cR$,
we have, using~(\ref{banana}) adapted to soft measures,
\begin{equation}
	\begin{split}
		\E_x[\cT]
		&\leq \E[\sigma_\lambda] + \E_x\left[\E_{X(\cT')}\left[\tau_{\cR,\lambda}\right]\Big|X(\cT')\in\cR\right]\\
		&\leq \frac{1}{\lambda} + \frac{1}{\phi_{\cR,\lambda}^*}\left\{
			1
			+ \varepsilon_{\cR,\lambda}^*
			+ \varepsilon_{\cR,\lambda}^* \ln \frac{1}{\varepsilon^*_{\mathcal{R}, \lambda} \zeta_{\mathcal{R}}}
		\right\} \\
		& =  \frac{1}{\phi_{\cR,\lambda}^*}\left\{
			1
			+ \varepsilon_{\cR,\lambda}^*
			+ \varepsilon_{\cR,\lambda}^* \ln \frac{1}{\varepsilon^*_{\mathcal{R}, \lambda} \zeta_{\mathcal{R}}}
			+ \frac{\phi^*_{\mathcal{R}, \lambda}}{\lambda}
		\right\} \,,
	\end{split}
\end{equation}
so that~(\ref{semaforo}) implies~(\ref{bruxelles}).

\section{Two examples}\label{trotta}
In this section we want to illustrate
the analysis method of Section~\ref{model_and_results}
with reference to toy models.
We will recover known results for the Glauber dynamics of the Curie-Weiss model
and give sharp asymptotics of its relaxation time,
we will also study a variation on the so-called ``$n$-dog'' theme
considered in~\cite{SC}
that illustrates the variety of scenarios one can encounter in proving our basic hypothesis
on $\varepsilon_{\mathcal{R}}^*$ or controlling $T_{\mathcal{R}}^*$.

\subsection{Metastable behavior of the Curie-Weiss model}\label{sec:CW}
\subsubsection{Model, dynamics and one-dimensional representation.}
Les us consider the Curie-Weiss model which is a mean-field spin system described
by $N$ spin variables, $\s=(\s_1,\ldots,\s_N)\in\cX=\{-1,1\}^N$, 
with  Hamiltonian
\begin{equation}
H_{N,h}(\s)  = -\frac{1}{2N} \sum_{i,j=1}^N
\s_i\s_j-h\sum_{i=1}^N \s_i\,,
\end{equation}
where $h>0$ is called the external field.
The corresponding Gibbs probability measure on $\cX$ is
\begin{equation}
\mu_{N,h,\b}(\s)= \frac{e^{-\b H(\s)}}{Z_{N,h,\b}},
\end{equation}
where $\b >0$ is the inverse of the temperature, and $Z_{N,h,\b}=\sum_{\s\in\cX} e^{-\b H_{N,h}(\s)}$ is
the normalizing factor called the partition function.
To make the notation simpler, we  set $H(\s)\equiv H_{N,h}(\s)$, $\mu(\s)\equiv \mu_{N,h,\b}(\s)$
and $Z\equiv Z_{N,h,\b}$.

For every $N\in \N$, and setting $[-1,1]_N :=\{-1,-1+\sfrac{2}{N},\ldots, 1\}$,
let us define the total magnetization, $m_N: \cX\mapsto [-1,1]_N $, as
\begin{equation}
 m_N(\s)\equiv \frac 1N\sum_{i=1}^N\s_i\,.
\end{equation}
Notice that it allows rewriting the Hamiltonian as a function of a one-dimensional parameter, i.e.
\begin{equation}
H(\s)= N u(m_N(\s))\,,
\end{equation}
with $u(m)= -\sfrac{m^2}{2}-hm$,  for $m\in[-1,1]$.
For simplicity, in the sequel we will identify functions defined on the
discrete set $[-1,1]_N$ with functions defined on $[-1,1]$ by setting
$f(m)\equiv f([2 N  m]/2N)$.

For $m\in[-1,1]$,  let us consider the functions
\begin{equation}
f_N(m)= -\frac{1}{\b N}\ln\sum_{\s\,:\, m_N(\s)=m}e^{-\b H(\s)}
\end{equation}
\begin{equation}
f(m)=\lim_{N\rightarrow\infty}f_N(m)\,.
\end{equation}
A standard computation shows that
\begin{equation}\label{def:freeE}
\begin{split}
f_N(m)&= u(m)-\frac{1}{\b}s(m)+\frac{1}{\b N}\ln\left( \sqrt{\sfrac{(1-m)^2\pi N}{2}}(1+o(1))\right)\\
&= f(m)+\frac{1}{\b N}\ln \left(\sqrt{\sfrac{(1-m)^2\pi N}{2}}(1+o(1))\right)\,,
\end{split}
\end{equation}
where $s(m)=-\left(\sfrac{1+m}{2}\ln\sfrac{1+m}{2}+\sfrac{1-m}{2}\ln\sfrac{1-m}{2}\right)$
is the entropy of Bernoulli random variables.
Moreover, the critical points of the function $f(m)$ satisfy the equation
\begin{equation}
m=\tanh(\b(m+h))\,
\end{equation}
and one gets, for $\b>1$ and $0<h< \sqrt{\sfrac{\b-1}{\b}}+\sfrac{1}{\b}\ln\left(\sqrt\b +\sqrt{\b-1}\right)$,
that the graph of $f(m)$ is given by a double-well with two minima $m_-<0<m_+$
and a maximum $m_0<0$.

We then consider the time evolution  of this system provided by a
heat-bath Glauber dynamics. This is a Markov chain on $\cX$,
denoted by $X=(X(t))_{t\geq 0}$, defined through the following (normalized) rates
\begin{equation} \label{def:ratesmicro}
	\left\{
		\begin{array}{l}
			p(\s,\s^i)= \frac{1}{N}\frac{e^{-\b H(\s^i)}}{e^{-\b H(\s)}+e^{-\b H(\s^i)}}\,,
			\quad \mbox{for } i=1,\ldots,N\\
			p(\s,\s)= 1-\sum_{i=1}^N p(\s,\s^i) \\
			p(\s,\s')=0\,,\qquad\qquad\qquad\mbox{elsewhere}
		\end{array}
	\right.
\end{equation}
where $\s^i$ denotes the configuration obtained from $\s$
by a spin-flip at the site $i$. An easy check shows
that $X$ is reversible w.r.t $\m$.

It turns out that the induced  dynamics on the space $[-1,1]_N$,
$\bar X(t):= m_N(X(t))$,  is also Markovian with transition rates
\begin{equation}\label{def:ratesmacro}
	\left\{
		\begin{array}{l}
			\bar p(m,m\pm \sfrac{2}{N})= \left(\frac{1\mp m}{2}\right)\left(\frac{1+\tanh(\b\D_{\pm})}{2}\right)\\
			\bar p(m,m)= 1-p(m,m+\sfrac{2}{N})-p(m,m-\sfrac{2}{N}) \\
			\bar p(m,m')=0\,,\qquad\qquad\qquad\mbox{elsewhere}
		\end{array}
	\right.
	\,,
\end{equation}
where $\D_{\pm}:= \pm m \pm h +\sfrac{1}{N}$.
Moreover, an easy check shows that the induced dynamics is reversible w.r.t.
the probability measure $\bar\m$ on $[-1,+1]_N$, given by
\begin{equation}\label{def:measuremacro}
\bar\m(m):=\frac{e^{-\b N f_N(m)}}{Z_N}= \sum_{\s\,:\,m_N(\s)=m}\m(\s)\,,
\end{equation}
where $f_N$ was defined in (\ref{def:freeE}).
When parameterized by $m$, the evolution of our system can be viewed as a
one-dimensional random walk driven by a double-well potential.
We thus consider the metastable region
$\cR\subset \cX$ and the corresponding one-dimensional projection
$\bar\cR\subset [-1,+1]_N$:
\begin{equation}
\cR:= \left\{\s\in \cX\,:\, m_N(\s)\leq m_0\right\}\quad;\quad
\bar\cR:= \left\{m\in [-1,1]_N\,:\, m\leq  m_0\right\}\,.
\end{equation}

\subsubsection{Verifying hypotheses: First part}
In order to apply  our Theorems (\ref{th:tempo}) (and the related inequality (\ref{banana}),
we first have to verify the hypotheses and in particular  provide a suitable
upper bound on $\e_\cR^*=\frac{\phi_\cR^*}{\g_\cR}$ and on $\zeta_\cR^*$.

By Lemma \ref{lem:phiphi} we get
\begin{equation}\label{stimaphi}
\begin{split}
\phi_\cR^*\leq \phi_\cR= \m_\cR(e_\cR)\leq \m_\cR(\partial_- \cR)
= \frac{\bar\mu(m_0)}{\mu(\cR)}\leq
\exp{(-\b N \G)}(1 +o(1)) \,,
\end{split}
\end{equation}
where in the last inequality we set $\G:= f(m_0)-f(m_-)$ and used (\ref{def:freeE}).
The hypothesis on $\e_\cR^*$ then follows immediately
by applying the $(N\log N)^{-1}$ lower bound on $\g_\cR$ that was derived in \cite{LLP} by a very precise computation.
Since we just need a rough control of this quantity,
that will be then compared to $\phi_\cR^*$, we provide here a new simpler argument
that yields a bound of order $N^{-3/2}$.

We first notice that the dynamics defined by (\ref{def:ratesmicro}) can be compared
to a random walk on the hypercube.
This suggests that a simple way to control the spectral gap is by mixing time estimates.
To this aim we consider the dynamics reflected in $\cR$, $X_\cR=(X_\cR(t))_{t\geq 0}$,
and  the related mixing time,
\begin{equation}
\t_{mix,\cR}(\sfrac{1}{4})=
\inf_{t\geq 0}\{\max_{\s\in\cR}
\|\P_\s(X_\cR(t)=\,\cdot\,)-\mu_\cR \|_{TV}\leq \sfrac{1}{4}\}\, .
\end{equation}

In what follows we will denote by $c(\b)$ a constant depending on $\b$
but independent of $N$, whose particular value may change from line to line.
With  the above notation it holds the following:
\begin{proposition}\label{prop:tmixing}
\begin{equation}\label{eq:tmix}
\t_{mix,\cR}(\sfrac{1}{4})\leq c(\b) N^{\frac{3}{2}}\,.
\end{equation}
\end{proposition}
\begin{proof}
The idea of the proof is based on coupling techniques, and we thus
define the following coupling:
\begin{itemize}
\item[a)] We  consider two independent dynamics
$X^\s_\cR$ and $X^\eta_\cR$,
with initial states $\s,\eta \in\cR$, and
let them run independently until they reach the same magnetization.
\item[b)] We then run an analogous coupling  to that defined in \cite{LLP},
 where the only difference here is the reflection on $\cR$.
This coupling is  defined for initial states
$\s,\s'\in \cR$ with $m_N(\s)=m_N(\s')$, and is defined in such a way so as to keep
the magnetizations coupled along the dynamics.
\end{itemize}
Let $T_{\s,\eta}$ denote the coupling time for the global coupled
dynamics $(X^\s_\cR(t), X^\eta_\cR(t))$, that is
\begin{equation}\label{eq:tcoupglobal}
T_{\s,\eta}=\inf\{t\geq 0 \,:\,X_\cR^\s(t)=X_\cR^{\eta}(t)\}\,.
\end{equation}
To provide an estimate on $\t_{mix,\cR}(\sfrac{1}{4})$, it is then enough to find
$t$ such that
\begin{equation}\label{tcoupling}
\max_{\s,\eta\in\cR} \P(T_{\s,\eta}>t)\leq \sfrac{1}{4}\,.
\end{equation}
The  coupling time $T_{\s,\eta}$ can be controlled by first estimating the time
to couple the magnetizations, 
and then the coupling time
of the dynamics starting in configurations with equal magnetization.

Formally, let $( X^m_{\bar\cR}(t), X^{m'}_{\bar\cR}(t))$ be the induced coupled dynamics
 with initial states $m,m'\in\bar\cR$, and define
\begin{equation}
\bar T_{m,m'}=\inf\{t\geq 0\,:  X^m_{\bar\cR}(t)= X^{m'}_{\bar\cR}(t)\}\,.
\end{equation}
Then, for any $\s,\eta\in\cR$,
\begin{equation}\label{tcoupsplit}
\begin{split}
\P(T_{\s,\eta}>t)&\leq \P\left(\max_{m,m'\in\bar\cR}\bar T_{m,m'}
+\max_{\s,\s'\in\cR:\atop  m_N(\s)=m_N(\s')}T_{\s,\s'}>t\right)\\
&\leq \max_{m,m'\in\bar\cR}\P(\bar T_{m,m'}>\sfrac{t}{2})+ \max_{\s,\s'\in\cR: \atop  m_N(\s)=m_N(\s')}
\P( T_{\s,\s'}>\sfrac{t}{2})
\end{split}
\end{equation}

Following the same argument of \cite{LLP} ({\it see Lemma 2.9 and its proof})
it is easy to prove that, for any $\s,\s'\in\cR$ such that $ m_N(\s)=m_N(\s')$,
\begin{equation}\label{eq:tcoupmacro}
\P(T_{\s,\s'}> c(\b) N\log N)\leq \sfrac{1}{N}\,
\end{equation}
for a constant $c(\b)$  depending on $\b$ but not in $N$.

To control the time $\bar T_{m,m'}$, let $\t_m$ denote the stopping time in $m$ for $X_{\bar\cR}$.
With some abuse of notation,  let $\E_m$ denote the average over the dynamics $X_{\bar\cR}$
with initial state $m\in\bar\cR$ and define
\begin{equation}\label{def:T}
T:= \max_{m\in\bar\cR}\, \E_m(\t_{m_-})= \max\left\{\E_{-1}(\t_{m_-});\E_{m_0}(\t_{m_-})\right\}
\end{equation}
where the second equality is due to an obvious geometric fact.
Then the following Lemmas hold.
\begin{lemma}\label{lem:Tmedio}
With the above notation, it holds
\begin{equation}
T\leq c(\b) N^{\frac{3}{2}}\,.
\end{equation}
\end{lemma}
\begin{lemma}\label{lem:Tcoupling}
For all $t\geq 40 T$, it holds
\begin{equation}\label{eq:lemTC}
\max_{m,m'\in\bar\cR}\P(T_{m,m'}>t)= \P(T_{-1,m_0}> t)\leq \sfrac{1}{2}\,.
\end{equation}
\end{lemma}
Before proving the above Lemmas, let us conclude the proof of Prop. \ref{prop:tmixing}.\\
By inequalities (\ref{tcoupsplit})-(\ref{eq:tcoupmacro}) and  Lemma \ref{lem:Tcoupling}, it follows
that if $t= \max\{240 T,N^{\frac{3}{2}}\}$, $N\geq 8$, and for any $\s,\eta\in\cR$,
\begin{equation}
\begin{split}
\P(T_{\s,\eta}>t)&
\leq \P(\bar T_{-1,m_0}> 120T)+\max_{\s,\s'\in\cR:\atop  m_N(\s)=m_N(\s')}
\P( T_{\s,\s'}> N^{\frac{3}{2}})\\
& \leq \frac{1}{8} +\frac{1}{N}\leq \frac{1}{4}
\end{split}
\end{equation}
By Lemma \ref{lem:Tmedio} the above inequality holds whenever $t> c(\b)N^{\frac{3}{2}}$
and the statement of the Proposition follows.
\end{proof}
We now come back to the proofs of the two Lemmas.
\begin{proof}[Proof of Lemma \ref{lem:Tmedio}]
Since $X_{\bar\cR}$ is a one-dimensional dynamics, for any two states $x,y\in\bar\cR$
we have the formula
\begin{equation}\label{uno}
\E_{x} (\t_{y})=\frac{\bar\m(V_{x,y})}{C(x,y)}
\end{equation}
where $V_{x,y}$ and $C(x,y)$ are, respectively, the equilibrium potential and
the capacity between $x$ and $y$. Moreover, if $x< y$, we have
\begin{equation}\label{due}
V_{x,y}(m)= \P_m(\t_x<\t_y)=\left\{
\ba{ll}
1 & \mbox{if } m\leq x\\
0 & \mbox{if } m\geq y\\
\frac{C(x,y)}{C(m,y)} &  \mbox{if } x<m< y
\ea
\right.\,,
\end{equation}
\begin{equation}\label{tre}
C(x,y)^{-1}= \sum_{k=0}^{(y-x)\sfrac{N}{2}-1}
\left(\bar c\left(x+\sfrac{2k}{N}, x+\sfrac{2(k+1)}{N}\right)\right)^{-1}\,,
\end{equation}
with $\bar c(x,y)=\bar\m(x)\bar p(x,y)$.
Analogous formulas hold when $x> y$.
\begin{remark}
Since we are considering the dynamics reflecting in $\bar\cR$,
the classical version for the mean exit time would be
$\E_{x} (\t_{y})=\frac{\bar\m_{\bar\cR}(V_{x,y})}{C_{\bar\cR}(x,y)}$
rather than Eq. (\ref{uno}),
where $C_{\bar\cR}(x,y)$ is the capacity defined through conductances
$\bar c(x,y)=\bar\m_{\bar\cR}(x)\bar p_{\bar\cR}(x,y)$.
However, it is easy to verify that for points $x,y\in\bar\cR$
the two formulas are equivalent.
\end{remark}

In Appendix B we will show that, if there are no local maxima of $f_N$ in $[x,y]$,
\begin{equation}\label{quattro}
C(x,y)^{-1}\leq
 c(\b)\sqrt{N} Z_N \max_{z\in\{x,y\}}e^{\b N f_N (z)}\,.
\end{equation}
Putting together (\ref{uno})-(\ref{quattro}), and since $f(y)>f(m_-)$ for any $y\in[-1,m_-)$,
we get
\begin{equation}
\begin{split}
\E_{-1}(\t_{m_-})&=\sum_{j=0}^{(m_-+1)\sfrac{N}{2}-1}
\bar\mu_\cR(-1+\sfrac{2j}{N})C(-1+\sfrac{2j}{N}, m_-)^{-1}
\leq c(\b) N^{\frac{3}{2}}\end{split}
\end{equation}
Analogous computations can be done for  $\E_{m_0}(\t_{m_-})$, providing the same
estimate. This concludes the proof of the Lemma.
\end{proof}
\begin{proof}[Proof of Lemma \ref{lem:Tcoupling}]
The first identity of (\ref{eq:lemTC}) is obvious, due to the geometry of the problem.

We then focus on the two dynamics $X_{\bar\cR}^{-1}$ and $X_{\bar\cR}^{m_0}$, and
define recursively the stopping times $s_k, \t_k$ and $s_k'$, for $k\geq 1$:
\begin{equation}\label{def:stoppingtimes}
\begin{split}
&s_1:= \inf_{t\geq 0}\{X_{\bar\cR}^{-1}(t)=m_-\}\\
&\t_k:= \inf_{t\geq s_k}\{X_{\bar\cR}^{m_0}(t)=m_-\}\\
&s_{k+1}:= \inf_{t\geq \t_k}\{X_{\bar\cR}^{-1}(t)=m_-\}\\
&s'_{k}:= \sup_{t\leq \t_k}\{X_{\bar\cR}^{-1}(t)=m_-\}
\end{split}
\end{equation}
Letting $\t(t)$ denote the first clock ring after time $t$ , we can define the event
\begin{equation}\label{evento}
A:= \{\exists k\leq 2\,:\, s'_k=\t_k\,\,\mbox{or}\,\, X_{\bar\cR}^{-1}(s_k'+\t(s_k'))>m_-\}\,.
\end{equation}
Notice that, since $s'_k$ is the time of the last visit in $m_-$ of the dynamics
$X_{\bar\cR}^{-1}$ before $\t_k$, and because $-1<m_-<m_0$, the occurrence of the event
$\{X_{\bar\cR}^{-1}(s_k'+\t(s_k'))>m_-\}$ implies that $T_{-1,m_0}<\t_k$.
In particular, we have $A\cup\{\t_2\leq t\}\subset\{T_{-1,m_0}\leq t\}$ and then
\begin{equation}\label{bo}
\P(T_{-1,m_0}>t)\leq \P(A^c)+\P(\t_2>t)\,.
\end{equation}
From the definition (\ref{def:ratesmacro}) of  rates $\bar p$, and using
that $\sfrac{1-m_-}{2}>\sfrac{1+m_-}{2}$ together with the properties
of the hyperbolic tangent, one can show that
\begin{equation}
\bar p(m_-,m_--\sfrac{2}{N})\leq \bar p(m_-,m_-+\sfrac{2}{N}) \Longleftrightarrow
 \bar p(m_-,m_--\sfrac{2}{N})\leq \sfrac{1}{2}(1- \bar p(m_-,m_-))\,.
\end{equation}
Thus
\begin{equation}
\begin{split}
\P(A^c)&\leq \P(X_{\bar\cR}^{-1}(s'_k+\t(s_k'))<m_-\,,\mbox{ for } k=1,2)\\
& \leq\left(\P(X_{\bar\cR}^{-1}(t+\t(t))=m_--\sfrac{2}{N}|
X_{\bar\cR}^{-1}(t)= m_-, X_{\bar\cR}^{-1}(t+\t(t))\ne m_-)\right)^{2}\\
&=\left(\frac{\bar p(m_-,m_--\sfrac{2}{N})}{1-\bar p(m_-,m_-)}\right)^2\leq \frac{1}{4}
\end{split}
\end{equation}

In order to estimate $\P(\t_2>t)$, we divide the interval $[0,t]$ in
$k=\lfloor \sfrac{t}{8T}\rfloor$ subintervals of length $8T$, where $T$
was defined in  (\ref{def:T}). The event $\{\t_2>t\}$ is then included
in the event that, in at least $k-2$ subintervals, at most one of the process
has arrivals in $m_-$.
On each interval, this happens with probability bounded above by
\begin{equation}
\P_{-1}(\t_{m_-}> 8T)+\P_{m_0}(\t_{m_-}> 8T)\leq \frac{1}{8T}(\E_{-1}(\t_{m_-})+\E_{m_0}(\t_{m_-}))\leq \frac{1}{4},
\end{equation}
 by  Markov's inequality, and then
 \begin{equation}
 \P(\t_2>t)\leq \binom{k-2}{k}\left(\frac{1}{4}\right)^{k-2}\leq 2^{-k+3} \,.
 \end{equation}
The statement follows taking  $t\geq 40T$, so that $k\geq 5$ and $\P(\t_2>t)\leq \sfrac{1}{4}$,
 and finally, from (\ref{bo}) and the previous estimates,
 \begin{equation}
 \P(T_{-1,m_0}>t)\leq \frac{1}{2}\,.
 \end{equation}
\end{proof}

Coming back to the hypotheses on $\e_\cR^*$,  from
the well known inequality $\g_\cR^{-1}\leq \t_{mix,\cR}(\sfrac{1}{4})$,
and  by  (\ref{stimaphi}) and (\ref{eq:tmix}), we obtain
\begin{equation}
\e_\cR^*=\frac{\phi_\cR^*}{\g_\cR}\leq c(\b)N^{\frac{3}{2}}\exp\left(-\b N\Gamma \right)(1+o(1))\,,
\end{equation}
which goes to 0 for any $N$ large enough, and thus satisfies
the hypothesis of our main theorems.
Moreover, from Lemma \ref{lem:zeta} and  the trivial bounds
$\D_\cR\geq N$ and $D_\cR\geq c(\b)$, we get
\begin{equation}
\zeta_\cR^*\geq e^{-\D_\cR D_\cR}\geq e^{-Nc(\b)}
\end{equation}
By inequality \ref{delta*phi*} and the previous estimates, this implies
that for $N$ large enough the condition $\phi_\cR^*\cdot T_\cR^*\ll 1$ is satisfied.

\subsubsection{Asympotic law of the exit time}
Applying Th. \ref{th:tempo} and by the related inequality (\ref{banana}),
 we get that in the limit  $N\rightarrow\infty$ and for all distributions $\nu$ on $\cR$,
\begin{equation}\label{CW:exittime}
	\left\{
		\begin{array}{l}
			\E_\nu (\t_{\XR})\leq {\phi_\cR^*}^{-1}(1+o(1))\\
			\E_\nu (\t_{\XR})\geq (1-\pi_\cR(\nu)){\phi_\cR^*}^{-1}(1+o(1))
		\end{array}
	\right.
\end{equation}
and for all $t>0$,
\begin{equation}\P_\nu (\phi_\cR^*\t_{\XR} >t )= (1-\pi_\cR(\nu))e^{-t}(1+o(1))\,.
\end{equation}
In particular, for $\nu=\m_\cR$,
\begin{equation}\label{CW:exittime2}
\E_{\m_\cR} (\t_{\XR})= {\phi_\cR^*}^{-1}(1+o(1))\end{equation}
\begin{equation}\P_{\m_\cR} (\phi_\cR^*\cdot\t_{\XR}>t )= e^{-t}(1+o(1))\,,\quad \forall t\geq 0\,.
\end{equation}

The next step concerns the estimation of $\phi_\cR^*$.
By  Th. \ref{th:phi&cap}, assuming that $N$ is large enough to have $\e_\cR^*+\phi_\cR^*\cdot T_\cR^*\ll 1$,
and choosing $k$ such that $\phi_\cR^*\ll k\ll \g_\cR$, we have
\begin{equation}\label{phicap}
\phi_\cR^*= \frac{C_k(\cR,\XR)}{\m(\cR)}\,.
\end{equation}
In order to estimate $C_k(\cR,\XR)$, we use its two variational characterizations,
(\ref{capacity}) and (\ref{cuore}), with suitable test functions.
The one-dimensional nature of the model suggests
that the capacities of the dynamics over $\cX$ could be well approximated
by the analogous quantities computed for the induced dynamics over $[-1,1]_N$.
This has the advantage that the equilibrium potential of the one-dimensional chain,
namely the minimizer in (\ref{capacity}) for $k,\l=\infty$,
can be explicitly given.

Following this idea, for the upper bound we consider the test function  $V(\s):= V_{m_-,m_0}(m_N(\s))$,
where $V_{m_-,m_0}$ is the function defined in (\ref{due}).
In other words, $V(\s)$ is the equilibrium potential associated to the
one-dimensional chain, with boundary conditions $V(m_-)=1$ and $V(m_0)=0$.
Explicitly,
for $m\in[-1,1]_N$,
\begin{equation}\label{potential}
V_{m_-,m_0}(m)= \P_m(\t_{m_-}<\t_{m_0})=\left\{
\ba{ll}
1 & \mbox{if } m\leq m_-\\
0 & \mbox{if } m\geq m_0\\
\frac{C(m_-,m_0)}{C(m,m_0)} &  \mbox{otherwise}
\ea
\right.\,,
\end{equation}
where $\displaystyle C(x,y)^{-1}= \sum_{k=0}^{(y-x)\sfrac{N}{2}-1}
\left(\bar c\left(x+\sfrac{2k}{N}, x+\sfrac{2(k+1)}{N}\right)\right)^{-1}$
and $\bar c(x,y)=\bar\m(x)\bar p (x,y)$.\\
Then we have
\begin{equation}\label{stimacapk}
\begin{split}
C_{k}&(\cR,\XR)\leq \cD(V)+ k \sum_{\s\in\cR}\m(\s)(V(\s)-1)^2\\
&= C(m_-,m_0)
+ k \sum_{m=m_-}^{m_0}\frac{e^{-\b N f_N(m)}}{Z_N}
\left(\frac{C(m_-,m_0)}{C(m_-,m)}\right)^2\\
&\leq  C(m_-,m_0) + k c(\b)N C(m_-,m_0)\,,
\end{split}
\end{equation}
where the last inequality is due to (\ref{quattro})
which is derived in Appendix B.
Since $k\ll \g_\cR\leq c(\b)N^{-3/2}$,
it holds
\begin{equation}
C_{k}(\cR,\XR)\leq C(m_-,m_0)(1+ o(1))\,.
\end{equation}

Similarly, for the lower bound on $C_{k}(\cR,\XR)$ we consider
a unitary test  flow $\psi$ which is constant on all couples $(\s,\s')$ of given magnetization.
Specifically  we set $\psi(\s,\s'):= \Psi(m_N(\s), m_N(\s'))$  and define
\begin{equation}\label{flow}
\left\{
\ba{ll}
\Psi(m, m+\sfrac{2}{N})= \left(S(m)\frac{(1-m)N}{2}\right)^{-1}& \forall\, m\in[m_-,m_0]_N \,,\\
\Psi(m,m')=0 & \mbox{otherwise}
\ea
\right.\,,
\end{equation}
with $S(m)=\left|\{\s\,:\, m_N(\s)=m\}\right|$.
With this definition, the flow $\Psi$ is the unitary flow from $m_-$ to $m_0$
that realized the minimum in the Thompson principle for the one-dimensional chain.
Inserting the test flow in (\ref{cuore}), we then  have
\begin{equation}\label{stimacapklow}
\begin{split}
C_{k}(\cR,\XR)^{-1}&\leq \cD(\psi) +\frac{\m(\cR)}{k}S(m_-)
\frac{e^{-\b N u(m_-)}}{\m(\cR)Z_N} \left(\frac{Z_N\cdot e^{-\b N u(m_-)}}{S(m_-)} \right)^2\\
&= \sum_{m=m_-}^{m_0} \frac{Z_N\cdot e^{\b N f_N(m)}}{\bar p(m,m+\sfrac{2}{N})}
+ \frac{1}{k}Z_N\cdot e^{\b N f_N (m_-)}\\
&= C(m_-,m_0)+ \frac{1}{k}\mu(m_-)^{-1}  \,.
\end{split}
\end{equation}
Since $k^{-1}\ll (\phi_\cR^*)^{-1}\leq \mu(m_-) C_{k}(\cR,\XR)^{-1}$
(by (\ref{phicap})), we get
\begin{equation}
C_{k}(\cR,\XR)\geq C(m_-,m_0)(1+ o(1))\,.
\end{equation}
From (\ref{phicap})) and with the above estimate, we then have
\begin{equation}
\phi_\cR^*= \frac{C(m_-,m_0)}{\mu(\cR)} (1+o(1))\,.
\end{equation}
Finally, $\m(\cR)$ and the capacity $C(m_-,m_0)$ defined in (\ref{tre})
can be both evaluated for large $N$ (see Appendix B),
providing the following asymptotic expressions:
\begin{equation}\label{capacityasynt}
C(m_-,m_0)=
\displaystyle\frac{\sqrt{(1-m_0^2)\b|f''(m_0)|}}{\pi N}\frac{e^{-\b N f(m_0)}}{Z_N}(1+o(1))
\end{equation}
\begin{equation}\label{m(R)}
\m(\cR)= \frac{e^{-\b Nf(m_-)}}{Z_N \cdot \sqrt{\b f''(m_-)(1-m_-^2)}}(1+o(1))\,.
\end{equation}
Altogether, under the same hypotheses of before, we have
\begin{equation}\label{lola}
\E_{\m_\cR}(\t_{\XR})=\frac{\pi N\cdot e^{\b N
(f(m_0)-f(m_-))}}{\b\sqrt{|f''(m_0)|f''(m_-)(1-m_0^2)(1-m_-^2)} }(1+o(1))
\end{equation}
\subsubsection{Verifying hypotheses: Second part}
To move to the second part of the analysis, which goes from
Th. \ref{th:gap&cap} to Th. \ref{mix}, we first have to estimate the quantities
$\phi_{\XR}^*$, $\g_{\XR}$ and $\e_\XR^*$ related to the dynamics over $\XR$.
As for $\phi_\cR^*$, we can find easily a rough (but sufficient) upper bound over $\phi_\XR^*$
by Lemma \ref{lem:phiphi}.
By trivial estimates we get
\begin{equation}\label{stimaphic}
\begin{split}
\phi_{\XR}^*\leq \phi_{\XR}= \m_{\XR}(e_{\XR})\leq \m_{\XR}(\partial_+\cR)
= \frac{\bar\mu(m_0+\sfrac{2}{N})}{\mu(\XR)}\leq
\exp{(-\b N \G')}(1 +o(1)) \,,
\end{split}
\end{equation}
where in the last inequality we set $\G':= f(m_0+\sfrac{2}{N})-f(m_+)$ and used (\ref{def:freeE}).

To get a lower bound over $\g_{\XR}$, as for $\g_\cR$ we proceed by
first estimating the mixing time of the dynamics reflected in $\XR$,
$X_{\XR}=(X_{\XR}(t))_{t\geq 0}$.
With obvious notation, it holds the following:
\begin{proposition}\label{prop:tmixingc}
\begin{equation}\label{eq:tmix1}
\t_{mix,\XR}(\sfrac{1}{4})\leq c(\b) N^{\frac{3}{2}}\,.
\end{equation}
\end{proposition}
\begin{proof}
The proof is the same as for Prop. \ref{prop:tmixing},
and can write down just replacing $\cR$ with $\XR$, the states
$-1, m_-$ and $m_0$ respectively with $m+$, $-1$ and $m_0+\sfrac{2}{N}$
and the time $T$ defined in (\ref{def:T}) with
$$T'= \max_{m\in\overline\XR}\, \E_m(\t_{m_+})=
\max\left\{\E_{+1}(\t_{m_+});\E_{m_0+\frac{2}{N}}(\t_{m_+})\right\}\,.
$$
\end{proof}
As a consequence of (\ref{stimaphic}) and Prop. \ref{prop:tmixingc},
we obtain
\begin{equation}\label{rosso}
\e_{\XR}^*=\frac{\phi_{\XR}^*}{\g_{\XR}}\leq c(\b)N^{\frac{3}{2}}\exp\left(-\b N\Gamma' \right)(1+o(1))\,,
\end{equation}
and also
\begin{equation}\label{blu}
\frac{\phi_{\cR}^*}{\g_{\XR}}\leq c(\b)N^{\frac{3}{2}}\exp\left(-\b N\Gamma \right)(1+o(1))\,,
\end{equation}
which are both $\ll 1$ for any $N$ large enough.

\subsubsection{Relaxation, transition and mixing times.}
From inequalities (\ref{rosso}) and (\ref{blu}),
we can choose $k,\l$ in Theorems \ref{th:gap&cap}-\ref{mix},
such that $\phi_{\cR}\ll k\ll \g_{\cR}$ and
$\phi_{\cR} + \phi_{\XR} \ll \l\ll \g_{\XR}$,
and then get matching upper and lower bound over on the relaxation time
$\g$ and the mean transition time.
Explicitly,  by Th. \ref{th:gap&cap}, \ref{th:t7} and \ref{th:t8},
it holds that in the limit  $N\rightarrow\infty$ and for $k,\l$ such that $\phi_{\cR}^*\ll k\ll \g_{\cR}$
and $\max\{\phi_{\cR}^*, \phi_{\XR}^*\}\ll \l\ll \g_{\XR}$,
\begin{itemize}
\item[i)]
\begin{equation}
\g^{-1}= \frac{\m(\cR)\m(\XR)}{C_k^\l(\cR,\XR)}(1+o(1))
\end{equation}
\item[ii)] For all distribution $\nu$ over $\cR$,
\begin{equation}\label{CW:transtime}
	\left\{
		\begin{array}{l}
			\E_\nu (\t_{\XR,\l})\leq {\phi_{\cR,\l}^*}^{-1}(1+o(1))\\
			\E_\nu (\t_{\XR,\l})\geq (1-\pi_\cR(\nu)){\phi_{\cR,\l}^*}^{-1}(1+o(1))
		\end{array}
	\right.
\end{equation}
and for all $t>0$,
\begin{equation}\P_\nu (\phi_{\cR,\l}^*\t_{\XR,\l} >t )= (1-\pi_\cR(\nu))e^{-t}(1+o(1))\,.
\end{equation}
In particular, for $\nu=\m_\cR$,
\begin{equation}\label{CW:exittime3}
\E_{\m_\cR} (\t_{\XR,\l})= {\phi^*_{\cR,\l}}^{-1}(1+o(1))\end{equation}
\begin{equation}\P_{\m_\cR} (\phi_{\cR,\l}^*\t_{\XR,\l}>t )= e^{-t}(1+o(1))\,,\quad \forall t\geq 0\,.
\end{equation}
\item[iii)]
\begin{equation}\label{phicap2}
\phi_{\cR,\l}^*= \frac{C_k^\l(\cR,\XR)}{\m(\cR)}(1+o(1))\end{equation}
\end{itemize}

To provide quantitative estimates on the relaxation and transition time,
it thus remains to estimate the capacity $C_k^\l(\cR,\XR)$.
As for $C_k(\cR,\XR)$,  we make use of the variational characterizations
(\ref{capacity}) and (\ref{cuore}), with suitable test functions.
The functions that we consider are extensions of
those defined for $C_k(\cR,\XR)$, in the sense that they are defined
similarly but on a bigger support.

Explicitly,   let $\widetilde V(\s):= V_{m_-,m_+}(m_N(\s))$,
with $V_{m_-,m_+}$ defined in (\ref{due}).
Plugging $\widetilde V$ into (\ref{capacity}), we  obtain the upper bound
\begin{equation}\label{stimacapkl}
C_{k}^\l(\cR,\XR)\leq \cD(\widetilde V)+ k \sum_{\s\in\cR}\m(\s)(\widetilde V(\s)-1)^2
+ \l\sum_{\s\in\XR}\m(\s)(\widetilde V(\s))^2\,
\end{equation}
Since $\widetilde V$ is defined as the equilibrium potential
of the one-dimensional chain, with boundary condition $\widetilde V (m_-)=1$ and $\widetilde V(m_+)=0$,
we have that $\cD(\widetilde V)=C(m_-,m_+)$.
Using inequality (\ref{quattro}) and choosing
$k,\l\ll \g_{\XR}\leq c(\b) N^{-\sfrac{3}{2}} $, the second and third terms
of (\ref{stimacapkl}) are bounded as
\begin{equation}\nonumber
\begin{split}
k \sum_{\s\in\cR}\m(\s)(\widetilde V(\s)-1)^2&=
k \sum_{m=m_-}^{m_0}\frac{e^{-\b N f_N(m)}}{Z_N}
\left(\frac{C(m_-,m_+)}{C(m_-,m-\sfrac{2}{N})}\right)^2\\
&\leq kc(\b) N^{\sfrac{3}{2}} \left(C(m_-,m_+)\right)^2 Z_N\, e^{\b N f_N(m_0)}\\
&\leq c(\b)\left(C(m_-,m_+)\right)^2 Z_N\, e^{\b N f_N(m_0)}\\
\end{split}
\end{equation}
\begin{equation}\nonumber
\begin{split}
\l\sum_{\s\in\XR}\m(\s)(\widetilde V(\s))^2 &=
\l\sum_{m=m_0}^{m^+}\frac{e^{-\b N f_N(m)}}{Z_N}
\left(\frac{C(m_-,m_+)}{C(m,m_+)}\right)^2\\
&\leq \l c(\b) N^{\sfrac{3}{2}} \left(C(m_-,m_+)\right)^2 Z_N\, e^{\b N f_N(m_0)}\\
&\leq c(\b)\left(C(m_-,m_+)\right)^2 Z_N\, e^{\b N f_N(m_0)}
\end{split}
\end{equation}

In Appendix B, the  capacity $C(m_-,m_+)$ is evaluated for large $N$
and  the following formula is obtained
\begin{equation}\label{capacitytot}
C(m_-,m_+)=
\displaystyle\frac{\sqrt{(1-m_0^2)\b|f''(m_0)|}}{2\pi N}\frac{e^{-\b N f(m_0)}}{Z_N}(1+o(1))\,.
\end{equation}
This implies that the second and third terms above are $o(C(m_-,m_+))$ and then
\begin{equation}
C_{k}^\l(\cR,\XR)\leq C(m_-,m_+)(1+o(1))\,.
\end{equation}

For the lower bound we consider a test unitary flow $\widetilde\psi(\s,\s'):=
\widetilde\Psi(m_N(\s), m_N(\s'))$  with
\begin{equation}\label{flow2}
\left\{
\ba{ll}
\Psi(m, m+\sfrac{2}{N})= \left(S(m)\frac{(1-m)N}{2}\right)^{-1}& \forall m\in[m_-,m_+]_N \,,\\
\Psi(m,m')=0 & \mbox{otherwise}
\ea
\right.\,.
\end{equation}
Inserting the test flow in (\ref{cuore}), we then  have
\begin{equation}\label{stimacapklow2}
\begin{split}
C_{k}^\l(\cR,\XR)^{-1}&\leq \cD(\widetilde\psi) +\frac{\m(\cR)}{k}S(m_-)
\frac{e^{-\b N u(m_-)}}{\m(\cR)Z_N} \left(\frac{Z_N\cdot e^{-\b N u(m_-)}}{S(m_-)} \right)^2\\
&+\frac{\m(\XR)}{\l}S(m_+)
\frac{e^{-\b N u(m_+)}}{\m(\XR)Z_N} \left(\frac{Z_N\cdot e^{-\b N u(m_+)}}{S(m_+)} \right)^2\\
&= \sum_{m=m_-}^{m_0} \frac{Z_N\cdot e^{\b N f_N(m)}}{\bar p(m,m+\sfrac{2}{N})}
+ \frac{1}{k}Z_N\cdot e^{\b N f_N (m_-)}+\frac{1}{\l}Z_N\cdot e^{\b N f_N (m_+)}\\
&\leq C(m_-,m_+)^{-1}(1+o(1)) \,,
\end{split}
\end{equation}
where in the last step we used that $k^{-1}\ll {\phi_\cR^*}^{-1}=\mu(m_-) C(m_-,m_0)^{-1}$,
$\l^{-1}\ll {\phi_{\XR}^*}^{-1}=\mu(m_+) C(m_0,m_+)^{-1}$, and the fact that
$C(m_-,m_0)\,,C(m_0,m_+)$ and $C(m_-,m_+)$ are all of order $N^{-1}e^{-\b N f(m_0)}$ (see Appendix B).
From (\ref{phicap2}) and with the above estimates, we then have
\begin{equation}
\phi_{\cR,\l}^*= \frac{C(m_-,m_+)}{\mu(\cR)} (1+o(1))\,.
\end{equation}
Finally, if we choose $\l$ so that $\l T_{\XR}^*\ll 1$, for example $\l\ll N^{-5/2}$,
then we can apply Th. \ref{mix} and get an upper bound on the mixing time of the same order
of the transition and relaxation times.
Altogether, in the limit  $N\rightarrow\infty$ and for $k,\l$ such that $e^{-\b N\Gamma}\ll k\ll N^{-3/2}$
and $e^{-\b N\Gamma}\}\ll \l\ll N^{-5/2}$, it holds
\begin{itemize}
\item[i)]
\begin{equation}\label{viola}
\begin{split}
\g^{-1}&=\E_{\m_\cR}(\t_{\XR,\l})(1+o(1))\\
&=\frac{2\pi N\cdot e^{\b N
(f(m_0)-f(m_-))}}{\b\sqrt{|f''(m_0)|f''(m_-)(1-m_0^2)(1-m_-^2)} }(1+o(1))
\end{split}
\end{equation}
\item[(ii)]
\begin{equation}
\t_{mix} (\sfrac 14) \leq 4 \g^{-1} (1+o(1))
\end{equation}
\end{itemize}

\begin{remark}
Notice that in the Curie-Weiss model
the mean exit time and the mean transition time
differ asymptotically only by a factor $2$  (see Eqs. (\ref{lola})and (\ref{viola})).
This is a slight difference
but one  that clarifies the different rule of the exit time from
the transition time. Notice also that by the well-known bound
$\t_{mix} (\sfrac 14)\geq \g^{-1}$,
the second result shows that the mixing time and the relaxation time
are of the same order, which is $N\cdot e^{\b N(f(m_0)-f(m_-))}$.
\end{remark}

\subsection{The wasp graph} \label{naso}

Given three positive real numbers
$r_a$, $r_t$ and $r_w$
and a positive integer $n$,
we set $l_a = \lfloor n r_a \rfloor$,
$l_t = \lfloor n r_t \rfloor$
and $l_w = \lfloor n r_w \rfloor$.
We then consider two cubic lattices
with vertices indexed by $\{0, \dots, l_a\}^3$
and $\{0, \dots, l_t\}^3$, four copies
of the square lattice with vertices indexed
by $\{0, \dots, l_w\}^2$ and we attached them together
by identifying some corners as in Figure~\ref{ciccio},
forming then the ``wasp graph'' with its four ``wings''
and its ``abdomen'' attached to its central ``thorax''.
We finally place ourself in the regime $n \gg 1$
and consider the random walk
with constant fixed rate $\alpha$
between nearest-neighbour, with $\alpha \leq 1/6$
to satisfy our hypothesis~(\ref{lampione}).

\begin{figure}[htbp]
	\centering
	\scalebox{.5}{\input{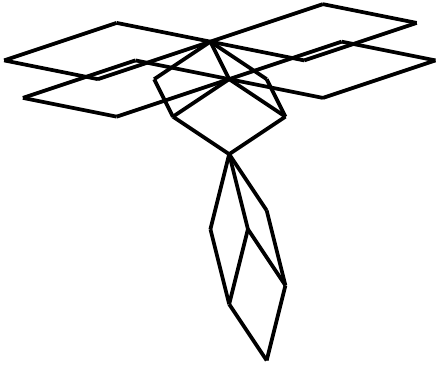_t}}
	\caption{
		\label{ciccio}
		A wasp without a head, and maybe misplaced wings.
	}
\end{figure}

Without wings and with $r_a = r_t = 1$
our wasp would be the three-dimensional ``n-dog'' of~\cite{SC}
and we would have a relaxation time and mixing time of order $n^3$.
We will reprove this result by using our $(\kappa, \lambda)$-capacities,
actually considering the same kind of flows
as those used in~\cite{SC}
but, as we will see, with some more flexibility
in building such flows.
We will also show that,
as one could expect,
adding the wings will not change the spectral gap
and mixing time asymptotics.
This is, in particular, to illustrate how one can recursively
apply Theorem~(\ref{th:gap&cap}): $\gamma_{\mathcal{R}}$
can be estimated by applying the theorem to the restricted dynamics itself.
The last reason why we introduced this toy model
that combines two and three-dimensional graphs
is that it illustrates some limits of our result:
while using the three-dimensional parts of our graph
we will be able to estimate easily the mixing time
of our random walk and prove asymptotic exponential laws
for exit and transition times,
the two-dimensional ``wing pair restricted'' random walk
is associated with a too slowly decreasing $\varepsilon^*_{\mathcal{R}}$
to control more than the relaxation time:
we are in the regime $\varepsilon^*_{\mathcal{R}} \ll 1$
but {\em outside} the regime $\phi^*_{\mathcal{R}} T^*_{\mathcal{R}} \ll 1$.

To fix some notation, let us call $\mathcal{R}_t$ the cubic lattice
$\{0, \dots, l_t\}^3$ and $\mathcal{R}_a$ this other cubic lattice
from which one corner is removed to have a partition of the vertices
$\mathcal{X}_b = \mathcal{R}_t \cup \mathcal{R}_a$
that describe the wasp body obtained after remotion of the wings.
In the same way we call $\mathcal{R}_1$, $\mathcal{R}_2$, $\mathcal{R}_3$
and $\mathcal{R}_4$ the four square lattices from which one point has been removed
to obtain a partition of $\mathcal{R} = \mathcal{R}_t \cup \bigcup_{i = 1}^4 \mathcal{R}_i$
that is the front part of the wasp.
We then have a partition of the whole graph $\mathcal{X} = \mathcal{R} \cup \mathcal{R}_a$

Let us start with the study of the $\mathcal{X}_b$-restricted random walk.
We will write $\phi^*_t$, $\gamma_t$ and $\varepsilon^*_t$ instead of
$\phi^*_{\mathcal{R}_t}$, $\gamma_{\mathcal{R}_t}$ and $\varepsilon^*_{\mathcal{R}_t}$.
From Lemma~\ref{lem:phiphi} we have, with obvious notation,
$\phi^*_t \leq \phi_t = 3\alpha / (1 + l_t)^3$.
As far as $\gamma_t$ is concerned we can use the following lemma.

\begin{lemma}
	For $d \geq 1$, if $\gamma_d$ is the spectral gap
	of the random walk on the $d$-dimensional lattice $\{0, \dots, l\}^d$
	with nearest-neighbour jump rate $\alpha$, then
	\begin{equation} \label{green}
		\frac{1}{\gamma_d} \leq \frac{dl(l + 1)}{2\alpha / e}
		\,.
	\end{equation}
	In addition, if we call 0 the all 0 coordinate vertex
	and $\gamma'_d$ the spectral gap of restricted random walk
	on $\{0, \dots, l\}^d \setminus \{0\}$, then
	\begin{equation} \label{white}
		\frac{1}{\gamma'_d} \leq \frac{d (l + 1)^2}{2 \alpha^{d + 1} / e}
		\,.
	\end{equation}
\end{lemma}

\begin{proof}
	The estimate~(\ref{green}) is obtained by the standard
	coordinate by coordinate coupling for the lazy version
	of the original random walk.
	The same coupling can be used for the random walk
	on the graph with one removed corner to bring each coordinate
	of two lazy random walks at distance 1 at most.
	Since one can then build another coupling making them meet
	in $d$ steps at most with probability $\alpha^d$ at least,
	and start the coupling again from the beginning if they do not,
	the mean meeting time of these lazy random walks
	is bounded from above by
	\begin{equation}
		\frac{1}{\alpha^d}\left(
			\frac{dl(l + 1)}{2\alpha} + d
		\right)
		\leq \frac{d(l + 1)^2}{2\alpha^{d + 1}}
		\,.
	\end{equation}
	Markov's inequality makes then possible to bound
	the mixing time of the lazy walk,
	from which one deduces~(\ref{white}) for the original walk.
\end{proof}

The first part of the lemma together with the previous estimate
on $\phi^*_t$ gives then
\begin{equation}
	\varepsilon^*_t
	\leq \frac{3 l_t (l_t + 1)}{2 \alpha / e} \frac{3 \alpha}{(l_t + 1)^3}
	\leq \frac{9 e}{1 + l_t}
	\,.
\end{equation}

To prove that $l_t^3$ is the correct order of the exit time from $\mathcal{R}_t$
we apply Theorem~\ref{th:phi&cap} to estimate $\phi^*_t$ from below
and then just have to build a unitary flow from $\mathcal{R}_a$ to $\mathcal{R}_t$
to estimate from below a $(\kappa, \lambda)$-capacity with $\lambda = + \infty$.
We send a flow of strength 1 to the junction corner (this will be modified
when working with $\lambda < +\infty$) and have to absorb it in $\mathcal{R}_t$.
Since we know the probabilistic meaning of the optimal flow and $\kappa$ has,
heuristically, to be small enough to be close to local equilibrium at absorption,
we should absorb a fraction of order $1 / (1 + l_t)^3$ of this unitary flow
in each vertex of $\mathcal{R}_t$. But we are not constrained to realize this
exactly and this is where we have some flexibility that helps in computation.
Since we also know from the electrical network picture that the optimal flow
should in some sense be radially distributed,
we build our flow as the mean of a random simple flow with some spherical symmetry.
Let us first explain how two build a certain random path $\xi$.
We begin by choosing a point $Q$ with positive coordinates
in the origin centered euclidean ball of radius $(1 + l_t)$
according to the normalized Lebesgue measure.
This point belongs to some unitary cube
with integer coordinates corners
and we call $Q'$ the corner with the smallest coordinates.
We then approximate the radius $[0,Q]$
by a coordinate non-decreasing path
that starts from $0$, is only made of edges
along the unitary cubes crossed by $[0, Q]$,
and ends in $Q'$.
Such a path is in particular a shortest path
on the lattice that links $0$ with $Q'$
and the fact that is exists can be shown
by shown by recurrence on the dimension
and by using projections along coordinate axes.
For such a random path $\xi$ (since $Q$ is random) we define a flow $\psi_\xi$
by $\psi_\xi(x, y) = \mathbbm{1}_{\{(x, y) \in \xi\}} - \mathbbm{1}_{\{(y, x) \in \xi\}}$.
We finally use as test flow in Thomson's principle
the associated mean flow, that is the flow $\psi$ such that,
for $x$ and $y$ nearest neighbours with $\|y\|_2 > \|x\|_2$,
$\psi(x, y) = \mathbbm{P}((x, y) \in \xi)$.
Since the distance between these approximating paths
and their associated radius is smaller than $\sqrt{3}$,
the chosen point has to be in cone of half angle $\alpha$,
with $\sin \alpha = \sqrt{3} / \|y\|_2$ for $y$ to be used in the approximating path.
It follows that
\begin{equation}
	\psi(x, y)
	\leq \frac{1}{\frac{1}{8} \frac{4 \pi (1 + l_t)^3}{3} }\frac{2 \pi}{3} (1 + l_t)^3 (1 - \cos \alpha)
	= 4 \left(
		1 - \sqrt{1 - \frac{3}{\|y\|_2^2}}
	\right)
	\leq \frac{12}{\|y\|_\infty^2}
	\,.
\end{equation}
Also, for all $x \in \mathcal{R}_t$, we have ${\rm div}_x \Psi = \mathbbm{P}(Q' = x)$
and
\begin{equation}
	\begin{split}
		\mathbbm{E}_{\mu_{R_t}}\left[
			\left(
				\frac{{\rm div} \Psi}{\mu_{\mathcal{R}_t}}
			\right)^2
		\right]
		& = \sum_{x \in \mathcal{R}_t} \frac{1}{(1 + l_t)^3} (1 + l_t)^6
		\mathbbm{P}^2\left(
			Q' = x
		\right) \\
		& = \sum_{x \in \mathcal{R}_t}  (1 + l_t)^3 \frac{
			{\rm Vol}(C(x) \cap B_{1 / 8})^2
		}{
			\left(
				\frac{1}{8} \frac{4}{3} \pi (1 + l_t)^3
			\right)^2
		}
	\end{split}
\end{equation}
where $C(x)$ is the unitary cube with $x$ as smallest coordinate coordinate corner,
$B_{1 / 8}$ is the positive coordinate part of the ball of radius $(1 + l_t)$
and ${\rm Vol}$ stands for the Lebesgue measure.
It follows that
\begin{equation}
	\mathbbm{E}_{\mu_{R_t}}\left[
		\left(
			\frac{{\rm div} \Psi}{\mu_{\mathcal{R}_t}}
		\right)^2
	\right]
	\leq \sum_{x \in \mathcal{R}_t}  \frac{1}{(1 + l_t)^3} \frac{
		{\rm Vol}(C(x) \cap B_{1 / 8})
	}{
		\left(
			\frac{1}{8} \frac{4}{3} \pi
		\right)^2
	}
	= \frac{6}{\pi}
\end{equation}

Thomson's principle gives then, with $\mu_b$ the uniform measure on $\mathcal{X}_b$, $\kappa > 0$
and $C_{\kappa, b}(R_t, R_a)$ the $\kappa$-capacity between $R_t$ and $R_a$ that is computed relatively
to the restricted random walk in $\mathcal{X}_b$,
\begin{equation}
	\begin{split}
		\frac{
			\mu_b\left(
				\mathcal{R}_t
			\right)
		}{
			C_{\kappa, b}\left(
				\mathcal{R}_t, \mathcal{R}_a
			\right)
		}
		& \leq \frac{(1 + l_t)^3}{\alpha} + \sum_{k = 0}^{l_t - 1} 3 \frac{(1 + l_t)^3}{\alpha} \left[
			3(1 + k)^2 - 3 (1 + k) + 1
		\right] \frac{144}{(1 + k)^4}
		+ \frac{1}{\kappa} \frac{1}{8} \frac{6}{\pi} \\
		& \leq \frac{(1 + l_t)^3}{\alpha} \left(
			1 + 432 \sum_{k \geq 1} \frac{3}{k^2}
		\right)
		+ \frac{6}{\kappa\pi} \\
		& \leq 2161 \frac{(1 + l_t)^3}{\alpha}
		+ \frac{6}{\kappa\pi}
		\,.
	\end{split}
\end{equation}
Choosing $1 / \kappa \ll n^3$ this shows that $l_t^3 = \lfloor r_t n \rfloor^3$ is the correct order for the exit time.

To estimate transition and relaxation time with the same tools,
we have to estimate $(\kappa, \lambda)$-capacities with finite $\lambda$.
Using not only randomly chosen sinks but randomly chosen sources also,
we can define a mean flow as previously to prove, with obvious notation,
\begin{equation}
	\begin{split}
		\frac{
			\mu_b\left(
				\mathcal{R}_t
			\right) \mu_b\left(
				\mathcal{R}_a
			\right)
		}{
			C_{\kappa, b}^\lambda\left(
				\mathcal{R}_t, \mathcal{R}_a
			\right)
		}
		& \leq \mu_b\left(
				\mathcal{R}_a
		\right) \sum_{k = 0}^{l_t - 1} 3 \frac{(1 + l_t)^3}{\alpha} \left[
			3(1 + k)^2 - 3 (1 + k) + 1
		\right] \frac{144}{(1 + k)^4} \\
		&\qquad + \mu_b\left(
				\mathcal{R}_t
		\right) \sum_{k = 0}^{l_a - 1} 3 \frac{(1 + l_a)^3}{\alpha} \left[
			3(1 + k)^2 - 3 (1 + k) + 1
		\right] \frac{144}{(1 + k)^4} \\
		&\qquad\quad + \mu_b\left(
				\mathcal{R}_a
		\right)\frac{6}{\kappa\pi}
		+ \mu_b\left(
				\mathcal{R}_t
		\right)\frac{6}{\lambda\pi}  \\
		& \leq 2160\left(
			 \mu_b\left(
				\mathcal{R}_a
			\right)\frac{(1 + l_t)^3}{\alpha}
			+ \mu_b\left(
				\mathcal{R}_t
			\right)\frac{(1 + l_a)^3}{\alpha}
		\right)
		+ \frac{6}{\kappa\pi}
		+ \frac{6}{\lambda\pi}
		\,,
	\end{split}
\end{equation}
to get, by choosing also $1 / \lambda \ll n^3$,
\begin{equation}
	\frac{
		\mu_b\left(
			\mathcal{R}_t
		\right) \mu_b\left(
			\mathcal{R}_a
		\right)
	}{
		C_{\kappa, b}^\lambda\left(
			\mathcal{R}_t, \mathcal{R}_a
		\right)
	}
	\leq 2160 \frac{2r_t^3 r_a^3}{r_t^3 + r_a^3} \frac{n^3}{\alpha}
	+ o(n^3)
	\,.
\end{equation}
By Theorem~\ref{th:gap&cap}, using~(\ref{white}) and choosing
$\kappa, \lambda \ll 1 / n^2$, this gives an upper bound
on the relaxation time with the same asymptotics.
Theorem~\ref{th:t8} also provides a similar upper bound on the mean transition time.
Going to lower bounds on $1/ \gamma_b = 1 / \gamma_{\mathcal{X}_b}$ and $1 / \phi^*_{t, \lambda}$
one could estimate $(\kappa, \lambda)$-capacities through Dirichlet principle,
but it is better to recall that
$\mu_b(\mathcal{R}_t)/ C_{\kappa, b}^\lambda(\mathcal{R}_t, \mathcal{R}_a) \geq (1 - \epsilon) / \phi^*_{t, \lambda}$
for any $\epsilon$ and large enough $n$ and $1 / \phi^*_{t, \lambda} \geq 1 /\phi^*_t \geq (1 + l_t)^3/(3\alpha)$.

As far as exponential asymptotic laws and mixing time asymptotics are concerned,
our results depend on our ability, with obvious notation, to control $\zeta^*_t$
and show that $\varepsilon^*_t \ln(1 / \zeta^*_t)$
goes to zero and ensure $\phi_t^* T^*_t \ll 1$.
This cannot be achieved by using Lemma~\ref{lem:zeta},
since $\varepsilon_t^*/\gamma_t = \phi_t^*/ \gamma_t^2 \gg 1$
and $\varepsilon_t^* D_t$ is of order 1. (Estimates provided by~(\ref{tartaruga})
and~(\ref{zidane}) would actually be enough in dimension four and five respectively.)
We are, however, in the special case where~(\ref{brown}) holds
and proves, since $\phi^*_t$ and $\phi_t$ are of the same order,
that $\varepsilon^*_t \ln(1 / \zeta^*_t) \ll 1$.
This proves local thermalization on time scale $n^2 \ln n$
and exponential asymptotic laws immediately follow.
This also proves that the mixing time goes like $n^3$
as soon as $r_t \neq r_a$.

We prove now that these asymptotics
on relaxation, transition, exit and mixing times
are still valid on the full wasp graph, wings included.
To do so we note that our previous flow used to estimate
$(\kappa, \lambda)$-capacities between $\mathcal{R}_t$
and $\mathcal{R}_a$ in $\mathcal{X}_b$
can still be used to estimate
$(\kappa, \lambda)$-capacities between $\mathcal{R}$ (wings included)
and $\mathcal{R}_a$ in the full space $\mathcal{X}$.
The key point now is to control $\gamma_{\mathcal{R}}$.
If our wasp had only one wing $\mathcal{R}_1$,
we could have use Theorem~\ref{th:gap&cap} directly
on $\mathcal{R} = \mathcal{R}_1 \cup (\mathcal{R} \setminus \mathcal{R}_1)$.
We will use instead Lemma~\ref{giallo}
and, anyway, will have to estimate $(\kappa, \lambda)$-capacities
between $\mathcal{R}_1$ and $\mathcal{R}_t$ and compare it with
$\gamma_1 = \gamma_{\mathcal{R}_1}$ and $\gamma_t = \gamma_{\mathcal{R}_t}$.

Let us start by estimating $\phi^*_1 = \phi^*_{\mathcal{R}_1}$.
In this two-dimensional case the easy bound $\phi^*_1 \leq \phi_1$
is not a good one. We then use the variational principle satisfied by $\phi^*_1$
(see Lemma \ref{lem:phiphi}) with the same kind of test function
we would have use to estimate $C_\kappa(\mathcal{R}_1, \mathcal{R}_t)$.
With $V(x) = (\ln(\|x\|_\infty))/(1 + \ln l_1)$ for $x \in \mathcal{R}_1$,
we have, with obvious notation,
\begin{equation}
	\begin{split}
		\mathcal{D}_1(V)
		& = \sum_{k = 0}^{l_1 - 1} 2 (k + 1) \frac{\alpha}{(1 + l_1)^2} \left[
			\frac{
				\ln(k + 2) - \ln(k + 1)
			}{
				1 + \ln l_1
			}
		\right]^2 \\
		& \leq \frac{2 \alpha}{
			\left(
				1 + \ln l_1
			\right)^2 \left(
				1 + l_1
			\right)^2
		} \sum_{k = 0}^{l_1 - 1} \frac{1}{k + 1} \\
		& \leq \frac{2 \alpha}{
			\left(
				1 + \ln l_1
			\right)^2 \left(
				1 + l_1
			\right)^2
		} \left(
			1 + \ln l_1
		\right)
		= \frac{2 \alpha}{
			\left(
				1 + \ln l_1
			\right) \left(
				1 + l_1
			\right)^2
		}
	\end{split}
\end{equation}
We also have
\begin{equation}
	\begin{split}
		\mu_1(V^2)
		& = \sum_{k = 0}^{l_1} (2k + 1) \frac{1}{(1 + l_1)^2}
		\frac{
			\ln^2 (1 + k)
		}{
			(1 + \ln l_1)^2
		}\\
		& \geq \frac{1}{(1 + l_1)^2 (1 + \ln l_1)^2} \int_1^{1 + l_1} (2x -1) \ln^2 x \,dx \\
		& \geq \frac{1}{3} \qquad \mbox{for $l_1 \geq 20$.}
	\end{split}
\end{equation}
Since $V_{|\partial_- \mathcal{R}_t} \equiv 0$ it follows that $\phi_1^* \leq 6\alpha/((1 + l_1)^2 (1 +\ln l_1))$ for $l_1 \geq 20$,
and $\varepsilon_1^*$ decreases at least like $1 / \ln l_1$.

To see that we have found the right order for $\phi_1^*$
we estimate the $\kappa$-capacity between $\mathcal{R}_1$ and $\mathcal{R}_t$
by using the same kind of flow as previously.
For nearest neighbours $x$ and $y$ with $\|y\|_2 > \|x\|_2$
such a flow $\psi$ satisfies, with $\sin \alpha \leq \sqrt{2}/\|y\|_2$ and $\alpha \leq \pi / 4$,
so that $\sin \alpha \geq \alpha / \sqrt{2}$
\begin{equation}
	\begin{split}
		\psi(x, y)
		& \leq \frac{1}{\frac{1}{4} \pi l_1^2}
		\alpha l_1^2
		\leq \frac{4}{\pi} \sqrt{2} \frac{\sqrt{2}}{\|y\|_2}
		\leq \frac{8}{\pi \|y\|_\infty}
		\,.
	\end{split}
\end{equation}
Thomson principle then gives
\begin{equation}
	\begin{split}
		\frac{\mu_{\mathcal R}(\mathcal{R}_1)}{C_\kappa(\mathcal{R}_1, \mathcal{R}_t)}
		& \leq \frac{(1 + l_1)^2}{\alpha}
		+ \sum_{k = 0}^{l_1 - 1} 2 (2k +1) \frac{(1 + l_1)^2}{\alpha} \left(\frac{8}{\pi (k + 1)}\right)^2
		+ \frac{1}{\kappa}\frac{1}{\frac{1}{4} \pi} \\
		& \leq \frac{(1 + l_1)^2}{\alpha} \left(
			1 + \frac{256}{\pi^2} \sum_{k = 0}^{l_1 - 1} \frac{1}{k + 1}
		\right)
		+ \frac{4}{\kappa \pi}\\
		& \leq \frac{(1 + l_1)^2}{\alpha} \left(
			1 + 26 (1 + \ln l_1)
		\right)
		+ \frac{4}{\kappa \pi}
		\,,
	\end{split}
\end{equation}
which proves, choosing $1/\kappa \ll n^2 \ln n$ that $1 / \phi^*_1$ is of order $n^2 \ln n$.

Combining these two- and three-dimensional flows we get,
denoting by $C_{\kappa, \mathcal{R}}^\lambda(\cdot, \cdot)$
the $(\kappa, \lambda)$-capacity
that is computed relatively to the random walk restricted in $\mathcal{R}$.
\begin{equation}
	\begin{split}
		\frac{
			\mu_{\mathcal{R}}(\mathcal{R}_1) \mu_{\mathcal{R}}(\mathcal{R}_t)
		}{
			C_{\kappa, \mathcal{R}}^\lambda\left(
				\mathcal{R}_1, \mathcal{R}_t
			\right)
		}
		& \leq 26 \mu_{\mathcal{R}}(\mathcal{R}_t) \frac{(1 + l_1)^2(1 + \ln l_1)}{\alpha}
		+ 2160 \mu_{\mathcal{R}}(\mathcal{R}_1) \frac{(1 + l_t)^3}{\alpha}\\
		& \qquad + \mu_{\mathcal{R}}(\mathcal{R}_t)\frac{4}{\kappa \pi}
		+ \mu_{\mathcal{R}}(\mathcal{R}_1)\frac{\pi}{6 \lambda}
		\,.
	\end{split}
\end{equation}
With $1 / \kappa \ll n^2 \ln n$ as previously and $1 / \lambda \ll n^3$,
since $\mu_{\mathcal{R}}(R_t)$ is of order 1
and $\mu_{\mathcal{R}}(R_1)$ is of order $1 / n$,
this leads to
\begin{equation}
	\begin{split}
		\frac{
			\mu_{\mathcal{R}}(\mathcal{R}_1) \mu_{\mathcal{R}}(\mathcal{R}_t)
		}{
			C_{\kappa, \mathcal{R}}^\lambda\left(
				\mathcal{R}_1, \mathcal{R}_t
			\right)
		}
		& \leq 26 c_1^2 \frac{n^2 \ln n}{\alpha} + O(n^2)
	\end{split}
\end{equation}
and, choosing also $1 / \kappa, 1 / \lambda \gg n^2$,
one has in the same way, using the previous test function $V$ in Dirichlet principle,
\begin{equation}
	\begin{split}
		\frac{
			\mu_{\mathcal{R}}(\mathcal{R}_1) \mu_{\mathcal{R}}(\mathcal{R}_t)
		}{
			C_{\kappa, \mathcal{R}}^\lambda\left(
				\mathcal{R}_1, \mathcal{R}_t
			\right)
		}
		& \geq \frac{1}{2} c_1^2 \frac{n^2 \ln n}{\alpha} + O(n^2)
		\,.
	\end{split}
\end{equation}
From Lemma~\ref{giallo}, (\ref{green}) and~(\ref{white})
it follows that $1 / \gamma_{\mathcal{R}} = o(n^3)$ and
the results obtained for the wasp without wings
holds with the wings also.

We note however that when applying the previous two-dimensional computation
to the random walk restricted to a pair of wings,  we obtain similarly a good spectral gap control
but we are not able to show the asymptotic exponential law or derive  mixing time estimates,
since, in this case $T^*_{\mathcal{R}_1}$ and $1 / \phi^*_1$ are of the same order.

\begin{appendix}
\section{Estimating $\zeta_\cR^*$}\label{crude}
\subsection{Crude and very crude estimates}
We prove here lemma~\ref{lem:zeta}.

\subsubsection*{Proof of i)}
One has, for all $x$ in $\cR$ and $t > 0$,
\begin{equation}
	\mu_\cR^*(x) = \P_{\mu_\cR^*}\left(
		X(t) = x \big|\, \tau_{\cX\setminus\cR} > t
	\right)
	\geq \P_{\mu_\cR^*}\left(
		X(t) = x , \tau_{\cX\setminus\cR} > t
	\right)
	\,.
\end{equation}
By the natural coupling between $X$ and $X_\cR$
up to time $\tau_{\XR}$ and stochastic
domination of $\tau_{\XR}$ by an exponential
random variable with parameter $\alpha_\cR$
that is independent from $X_R$,
it follows
\begin{equation}
	\mu_\cR^*(x) \geq \P_{\mu_\cR^*}\left(
		X_\cR(t) = x
	\right) e^{-\alpha_\cR t}
	\,.
\end{equation}
By Cauchy-Schwarz inequality,
Proposition \ref{prop:varH},
and the standard trick to control
$\ell_\infty(\mu_\cR)$ norms with $\ell_2(\mu_\cR)$ norms
(the same we used in the proof of Theorem \ref{th:dinamica})
we get
\begin{equation}\label{pollo}
	\mu_\cR^*(x) \geq \left(
		1 - e^{-\gamma_\cR t}\sqrt{\frac{\varepsilon_\cR^*}{(1 - \varepsilon_\cR^*)\mu_\cR(x)}}\,
	\right)
	e^{-\alpha_\cR t} \mu_\cR(x)
	\,.
\end{equation}
To make this bound useful, we notice that the term inside the bracket is larger than or equal to $1/2$
if
$$t\geq t_0:=\frac{1}{2\g_\cR}\ln\left(\frac{4\e_\cR^*}{(1-\e_\cR^*)\m_\cR(x)}\right)$$
If $t_0>0$ for all $x\in\cR$, that is if $\frac{4\e_\cR^*}{(1-\e_\cR^*)\z_\cR}>1$, then we can plug in
its value in (\ref{pollo}) and get, by definition of $\z_\cR^*$,
\begin{equation}
\z_\cR^*\geq \min_{x\in\cR} \frac{\m_\cR(x)}{4}
\left(\sqrt{\frac{4\varepsilon_\cR^*}{(1 - \varepsilon_\cR^*)\mu_\cR(x)}}\,\right)
^{-\frac{2\a_\cR}{\g_\cR}}
\geq \frac{\z_{\cR}}{4}
\left(\sqrt{\frac{4\varepsilon_\cR^*}{(1 - \varepsilon_\cR^*)\z_\cR}}\,\right)
^{-\frac{2\a_\cR}{\g_\cR}}\,.
\end{equation}
 On the other hand, if $\frac{4\e_\cR^*}{(1-\e_\cR^*)\z_\cR}\leq1$,
 we can just take the value $t=0$ in (\ref{pollo}) and get
\begin{equation}
\z_\cR^*\geq \min_{x\in\cR}
\left(1-
\sqrt{\frac{\varepsilon_\cR^*}{(1 - \varepsilon_\cR^*)\mu_\cR(x)}} \,
\right)^2 \m_\cR(x)
\geq \frac{\z_\cR}{4}\,.
\end{equation}
Taking the logarithm of $1/\z_\cR^*$ and putting things together,
we obtain the stated inequality.

\subsubsection*{Proof of ii)}
The first inequality is obvious from the definition of $\z_\cR^*$.
Let $\hat X$ denote the discrete version of $X$ like in Section~\ref{quasi_restr_p_2}
and let $N(t)$ be the number of rings up to time $t$.
 Then, for $z\in\cR$, we have
 \begin{equation}
\begin{split}
\m_\cR^*(z)&=
\lim_{t\to\infty} \P_x(X(t)=z\,|\, \t_{\XR}>t)\\
&=\lim_{t\to\infty} \sum_{k\geq0}
\P_x(\hat X(k)=z\,|\,\hat\t_{\XR}>k)\P(N(t)=k)\\
&\geq \lim_{t\to\infty} \sum_{k\geq0}
\P_x(\hat X(k+D_\cR)=z\,|\,\hat\t_{\XR}>k+D_\cR)\P(N(t)=k+D_\cR)\\
&= \lim_{t\to\infty} \sum_{k\geq0}
\sum_{y\in\cR}\P_x(\hat X(k)=y\,|\,\hat\t_{\XR}>k)
\P_y(\hat X(D_\cR)=z\,|\,\hat\t_{\XR}>D_\cR)\\
&\qquad\qquad\qquad\qquad\times\P(N(t)=k+D_\cR)\,,
\end{split}
 \end{equation}
 where we used the notation $\hat\t_{\XR}$ for the hitting time
 of the chain $\hat X$ on $\XR$.
Since for all $y\in\cR$ we have
$$\P_y(\hat X(D_\cR)=z\,|\,\hat\t_{\XR}>D_\cR)\geq
\P_y(\hat X(D_\cR)=z\,,\,\hat\t_{\XR}>D_\cR)\geq
e^{-\D_\cR D_\cR}\,,
$$
we get
\begin{equation}
\begin{split}
\m_\cR^*(z)&
\geq e^{-\D_\cR D_\cR}
\lim_{t\to\infty} \sum_{k\geq0}\sum_{y\in\cR}
\P_x(\hat X(k)=y\,|\,\hat\t_{\XR}>k)\P(N(t)=k+D_\cR)\\
&=e^{-\D_\cR D_\cR}\lim_{t\to\infty}\P(N(t)\geq D_\cR)
= e^{-\D_\cR D_\cR}\,.
\end{split}
\end{equation}

\subsection{Superharmonicity of $h_\cR^*$}
To prove that $h_\cR^*$ is a super-harmonic function,
notice that, for all $x\in\cR$,
\begin{equation}
\begin{split}
(\cL h_\cR^*)(x)&= -h_\cR^*(x) +
\sum_{y\in\cX} p(x,y)h_\cR^*(y)\\
&= -h_\cR^*(x)+ \sum_{y\in\cR} p(x,y)\frac{\m_\cR^*(y)}{\m_\cR(y)}\\
&= -h_\cR^*(x)+ \sum_{y\in\cR} p_\cR^*(x,y)\frac{\m_\cR^*(y)}{\m_\cR(y)}\\
&= -h_\cR^*(x)+ \sum_{y\in\cR} p_\cR^*(y,x)\frac{\m_\cR^*(y)}{\m_\cR(x)}\\
&= -\phi_\cR^*h_\cR^*(x)\leq 0\,,
\end{split}
\end{equation}
where in the last two lines we used the reversibility
of $p_\cR^*$ w.r.t. $\m_\cR$ and that $\m_\cR^*p_\cR^*=(1-\phi_\cR^*)\m_\cR^*$.

\section{Computation of relevant quantities in the Curie-Weiss model}\label{App:CW}
Here we provide some accurate estimates over relevant quantities
in the characterization of the metastable behavior for the Curie-Weiss model.

\subsection{Measure of the metastable set}
Here we prove formula (\ref{m(R)}) which provides the asymptotic expression of $\m(\cR)$.
By definition on $\mu$ and $\cR$ and using (\ref{def:freeE}),
we have
\begin{equation}\label{m(R)1}
\begin{split}
Z_N \cdot\bar\m(\cR)&= \sum_{k=(-1-m_-)\frac{N}{2}}^{(m_0-m_-)\sfrac{N}{2}}
e^{-\b N f_N(m_- +\frac{2k}{N})}\\
&=\sqrt{\frac{2}{N}}\frac{1}{\sqrt{\pi (1-m_-^2)}}\sum_{k=(-1-m_-)\sfrac{N}{2}}^{(m_0-m_-)\sfrac{N}{2}}
e^{-\b N f(m_- +\frac{2k}{N})}(1+o(1))\\
&= \sqrt{\frac{2}{N}}\frac{1}{\sqrt{\pi (1-m_-^2)}} \,e^{-\b N f(m_-)}
\sum_{k=-\lceil N^{\frac{2}{3}}\rceil}^{\lfloor N^{\frac{2}{3}}\rfloor}
e^{-\frac{\b N f''(m_-)}{2}(\frac{2k}{N})^2} (1+o(1))
\end{split}
\end{equation}
where in the last step we use Taylor approximation and observe that
$$\sum_{|k|\geq \lceil N^{\frac{2}{3}}\rceil}
e^{-\frac{\b N f''(m_-)}{2}(\frac{2k}{N})^2}\leq N e^{-c(\b)N^{\frac{1}{3}}}\,.$$
Approximating the sum in (\ref{m(R)1}) with an integral, we finally get
\begin{equation}\label{m(R)2}
\begin{split}
Z_N\cdot\bar\m(\cR)&=
\frac{1}{\sqrt{\pi (1-m_-^2)}}e^{-\b N f(m_-)}
\int_{\R}e^{-\b  f''(m_-)x^2}dx(1+o(1)) \\
&= \frac{1}{\sqrt{\b f''(m_-)(1-m_-^2)}}
e^{-\b N f(m_-)}(1+o(1))\,.
\end{split}
\end{equation}

\subsection{Capacities between points in the macroscopic scale}
Here we provide an asymptotic expression for the capacity between points
in the one-dimensional dynamics with transition rates (\ref{def:ratesmacro}),
that is the dynamics induced on $[-1,1]_N$ by the Curie-Weiss heath-bath dynamics.

As recalled in Section \ref{sec:CW}, for points $x<y\in [-1,1]_N$
it holds
$$
C(x,y)^{-1}= \sum_{k=0}^{(y-x)\sfrac{N}{2}-1}
\left(\bar c\left(x+\sfrac{2k}{N}, x+\sfrac{2(k+1)}{N}\right)\right)^{-1}\,,
$$
with $\bar c(x,y)=\bar\m(x)\bar p(x,y)$.
In the following, we will first provide an asymptotic approximation
for $C(x,y)^{-1}$ when  $m_0\not\in [x,y]$, and then  compute the asymptotic
formulas of  $C(m_-,m_0)^{-1}$, $C(m_0,m_+)^{-1}$ and $C(m_-,m_0)^{-1}$ .

If $m_0\not\in[x,y]$, we can assume w.l.o.g. that $f(x)>f(z)$ for all $z\in (x,y)$.
Bounding below the rates $\bar p$
with a positive constant $c(\b)$ and from (\ref{def:freeE}), we get
\begin{equation}\label{nomax}
\begin{split}
C(x,y)^{-1}&\leq
 c(\b)\sqrt N Z_N \sum_{k=0}^{(y-x)\sfrac{N}{2}-1}e^{\b N f (x+\sfrac{2k}{N})}\\
& \leq   c(\b)\sqrt{N}Z_N e^{\b N f(x)}\sum_{k=0}^{(y-x)\sfrac{N}{2}-1} e^{-\b |f'(\xi_k)||2k}\\
& \leq  c(\b)\sqrt{N}Z_N e^{\b N f(x)}
\end{split}\,,
\end{equation}
where in the second line we used $f (x+\sfrac{2k}{N})-f (x)= -|f(\xi_k)|\sfrac{2k}{N}$ for some $\xi_k\in(x, x+\sfrac{2k}{N})$
and that there exists a constant $c>0$ such that $|f(\xi_k)|>c$ uniformly in $k$.
If instead $f(y)>f(z)$ for all $z\in (x,y)$, then  is enough to switch $x$ and $y$ and run the
argument above, as $C(x,y)=C(y,x)$.  Altogether, this provides inequality (\ref{quattro}).

To  compute $C(m_-,m_0)$, we first notice that, since $m_0$ is a critical point of $f$,
we can write
$$\tanh(\b\D_{\pm}(x+\sfrac{2k}{N}))=\pm \tanh(\b(m_0+h))(1+o(1))=\pm m_0 (1+o(1))$$
and then get the approximation
$\bar p(x\pm\sfrac{2k}{N},x\pm\sfrac{2(k+1)}{N})=\frac{(1-m_0^2)}{4}(1+o(1))$.
Proceeding as for the computation of $\bar\mu(\cR)$,
we have
\begin{equation}\label{simax}
\begin{split}
C(m_-,m_0)^{-1}&= \frac{4}{(1-m_0^2)} Z_N \sum_{k=0}^{(m_0-m_-)\sfrac{N}{2}-1}e^{\b N f_N (m_0-\sfrac{2k}{N})}(1+o(1))\\
& =  2\sqrt{\frac{2N\pi}{(1-m_0^2)}} Z_N e^{\b N f(m_0)}
\sum_{k=0}^{\lfloor N^{\frac{2}{3}}\rfloor}
e^{-\frac{-\b N |f''(m_0)|}{2}(\frac{2k}{N})^2} (1+o(1))\\
&= 2N\sqrt{\frac{\pi}{(1-m_0^2)}}Z_N e^{\b N f(m_0)}
\int_{0}^{+\infty}e^{-\b |f''(m_0)|x^2}dx\,(1+o(1)) \\
&= \frac{\pi N}{\sqrt{(1-m_0^2)\b |f''(m_0)|}}Z_N
e^{\b N f(m_0)}(1+o(1))\,.
\end{split}
\end{equation}
This provides formula (\ref{capacityasynt}).

Similarly we can compute $C(m_0, m_+)^{-1}$ and $C(m_-, m_+)^{-1}$. In the first case
we let the sum over $k$ of (\ref{simax}) run from $(m_0-m_+)\sfrac{N}{2}$ to $0$,
and then get the same result as for  $C(m_-, m_0)^{-1}$.
When computing $C(m_-, m_+)^{-1}$, we let the sum over $k$  run from
$(m_0-m_+)\sfrac{N}{2}$ to $(m_0-m_-)\sfrac{N}{2}-1$. We then
approximate the sum by an integral over all $\R$ that finally produces
an extra factor of $2$ with respect to (\ref{simax}).
This yields formula (\ref{capacitytot}).
\end{appendix}

\end{document}